\documentclass[11pt,reqno]{amsproc} \title[The inviscid limit for the Navier-Stokes equations]{The inviscid limit for the Navier-Stokes equations with data\\ analytic only near the boundary} \author[I.~Kukavica]{Igor Kukavica} \address{Department of Mathematics, University of Southern California, Los Angeles, CA 90089} \email{kukavica@usc.edu} \author[V.~Vicol]{Vlad Vicol} \address{Courant Institute of Mathematical Sciences, New York University, New York, NY 10012} \email{vicol@cims.nyu.edu} \author[F.~Wang]{Fei Wang} \address{Department of Mathematics, University of Maryland, College Park, MD 20740} \email{fwang256@umd.edu} \usepackage{fancyhdr} \usepackage{comment} \usepackage[margin=1in]{geometry} \usepackage{amsmath, amsthm, amssymb} \usepackage{times} \usepackage{graphicx} \usepackage[usenames,dvipsnames,svgnames,table]{xcolor} \usepackage[colorlinks=true, pdfstartview=FitV, linkcolor=blue, citecolor=blue, urlcolor=blue]{hyperref} \begin{document} \def\XX{X} \def\YY{Y} \def\ZZZ{Z} \def\intint{\int\!\!\!\!\int} \def\OO{\mathcal O} \def\SS{\mathbb S} \def\CC{\mathbb C} \def\RR{\mathbb R} \def\TT{\mathbb T} \def\ZZ{\mathbb Z} \def\HH{\mathbb H} \def\RSZ{\mathcal R} \def\LL{\mathcal L} \def\SL{\LL^1} \def\ZL{\LL^\infty} \def\GG{\mathcal G} \def\eps{\varepsilon} \def\tt{\langle t\rangle} \def\erf{\mathrm{Erf}} \def\red#1{\textcolor{red}{#1}} \def\blue#1{\textcolor{blue}{#1}} \def\mgt#1{\textcolor{magenta}{#1}} \def\ff{\rho} \def\gg{G} \def\tilde{\widetilde} \def\sqrtnu{\sqrt{\nu}} \def\ww{w} \def\ft#1{#1_\xi} \def\les{\lesssim} \renewcommand*{\Re}{\ensuremath{\mathrm{{\mathbb R}e\,}}} \renewcommand*{\Im}{\ensuremath{\mathrm{{\mathbb I}m\,}}} \def\llabel#1{\notag} \newcommand{\norm}[1]{\left\|#1\right\|} \newcommand{\nnorm}[1]{\lVert #1\rVert} \newcommand{\abs}[1]{\left|#1\right|} \newcommand{\NORM}[1]{|\!|\!| #1|\!|\!|} \newtheorem{theorem}{Theorem}[section] \newtheorem{corollary}[theorem]{Corollary} \newtheorem{proposition}[theorem]{Proposition} \newtheorem{lemma}[theorem]{Lemma} \theoremstyle{definition} \newtheorem{definition}{Definition}[section] \newtheorem{remark}[theorem]{Remark} \def\theequation{\thesection.\arabic{equation}} \numberwithin{equation}{section} \def\ll{{\color{red}\ell}} \def\ee{\epsilon_0} \def\startnewsection#1#2{\section{#1}\label{#2}\setcounter{equation}{0}} \def\nnewpage{\newpage} \def\sgn{\mathop{\rm sgn\,}\nolimits} \def\Tr{\mathop{\rm Tr}\nolimits} \def\div{\mathop{\rm div}\nolimits} \def\supp{\mathop{\rm supp}\nolimits} \def\indeq{\quad{}} \def\period{.} \def\semicolon{\,;} \def\nts#1{{\cor #1\cob}} \def\colr{\color{red}} \def\colb{\color{black}} \definecolor{colorgggg}{rgb}{0.1,0.5,0.3} \definecolor{colorllll}{rgb}{0.0,0.7,0.0} \definecolor{colorhhhh}{rgb}{0.3,0.75,0.4} \definecolor{colorpppp}{rgb}{0.7,0.0,0.2} \definecolor{coloroooo}{rgb}{0.9,0.4,0} \definecolor{colorqqqq}{rgb}{0.1,0.7,0} \def\colg{\color{colorgggg}} \def\collg{\color{colorllll}} \def\cole{\color{black}} \def\colu{\color{blue}} \def\colc{\color{colorhhhh}} \def\colW{\colb} \def\comma{ {\rm ,\qquad{}} } \def\commaone{ {\rm ,\quad{}} } \def\les{\lesssim} \def\nts#1{{\color{red}\hbox{\bf ~#1~}}} \def\blackdot{{\color{red}{\hskip-.0truecm\rule[-1mm]{4mm}{4mm}\hskip.2truecm}}\hskip-.3truecm} \def\bluedot{{\color{blue}{\hskip-.0truecm\rule[-1mm]{4mm}{4mm}\hskip.2truecm}}\hskip-.3truecm} \def\purpledot{{\color{colorpppp}{\hskip-.0truecm\rule[-1mm]{4mm}{4mm}\hskip.2truecm}}\hskip-.3truecm} \def\greendot{{\color{colorgggg}{\hskip-.0truecm\rule[-1mm]{4mm}{4mm}\hskip.2truecm}}\hskip-.3truecm} \def\cyandot{{\color{cyan}{\hskip-.0truecm\rule[-1mm]{4mm}{4mm}\hskip.2truecm}}\hskip-.3truecm} \def\reddot{{\color{red}{\hskip-.0truecm\rule[-1mm]{4mm}{4mm}\hskip.2truecm}}\hskip-.3truecm} \def\gdot{\greendot} \def\bdot{\bluedot} \def\ydot{\cyandot} \def\rdot{\cyandot} \def\fractext#1#2{{#1}/{#2}} \def\ii{\hat\imath} \def\fei#1{\textcolor{blue}{#1}} \def\vlad#1{\textcolor{cyan}{#1}} \def\igor#1{\textcolor{colorqqqq}{#1}} \begin{abstract} We address the inviscid limit for the Navier-Stokes equations in a half space, with initial datum that is analytic only close to the boundary of the domain, and has finite Sobolev regularity in the complement. We prove that for such data the solution of the Navier-Stokes equations converges in the vanishing viscosity limit to the solution of the Euler equation, on a constant time interval. \hfill \today \end{abstract} \maketitle \startnewsection{Introduction}{sec01} We consider the Cauchy problem for the 2D incompressible Navier-Stokes equations \begin{align} &\partial_t u - \nu\Delta u + u\cdot\nabla u + \nabla p = 0 \label{EQthisijhhssdinthegdsfg:01}\\ &\div u = 0 \label{EQthisijhhssdinthegdsfg:02}\\ &u|_{t=0} = u_0 \label{EQthisijhhssdinthegdsfg:03} \end{align} on the half-space domain $\HH =\TT \times \RR_+ = \{ (x,y) \in \TT \times \RR \colon y \geq 0\}$, with $\TT = [-\pi,\pi]$-periodic boundary conditions in $x$, and the {\em no-slip boundary condition} \begin{align} &u|_{y=0} = 0 \label{EQthisijhhssdinthegdsfg:03b} \end{align} on $\partial \HH = \TT \times \{y=0\}$. Here $\nu>0$ is the kinematic viscosity. Formally setting $\nu = 0$ in \eqref{EQthisijhhssdinthegdsfg:01}--\eqref{EQthisijhhssdinthegdsfg:03} we arrive at the 2D incompressible Euler equations, which are supplemented with the {\em slip boundary condition} given by $u_2|_{y=0}=0$. \par A fundamental problem in mathematical fluid dynamics is to determine whether in the {\em inviscid limit} $\nu \to 0$ the solutions of the Navier-Stokes equations converge to those of the Euler equations, in the {\em energy norm} $L^\infty(0,T;L^2(\HH))$, on an $\OO(1)$ (with respect to $\nu$) time interval. A classical result of Kato~\cite{Kato84b} relates this problem to the {anomalous dissipation of kinetic energy}: A necessary and sufficient condition for the inviscid limit to hold in the energy norm is that the total dissipation of the energy in a boundary layer of width $\OO(\nu)$, vanishes as $\nu\to0$. To date it remains an open problem whether this condition holds for any smooth initial datum. \par In this paper we prove that the inviscid limit holds for initial datum $u_0$ for which the associated vorticity $\omega_0 = \nabla^\perp \cdot u_0$ is analytic in an $\OO(1)$ strip next to the boundary, and is only Sobolev smooth on the complement of this strip. In particular, our main theorem (cf.~Theorem~\ref{T02} below) both implies the seminal result of Sammartino-Caflisch~\cite{SammartinoCaflisch98b}, which assumes analyticity on the entire half-plane, and also the more recent remarkable result of Maekawa~\cite{Maekawa14}, which assumes that the initial vorticity vanishes identically in an $\OO(1)$ strip next to the boundary. \par The fundamental source of difficulties in studying the inviscid limit is the mismatch in boundary conditions between the viscous Navier-Stokes flow (no-slip, $u_1|_{y=0} = u_2|_{y=0} = 0$) and the inviscid Euler flow (slip, $u_2|_{y=0} = 0$). Mathematically, this prohibits us from obtaining $\nu$-independent a~priori estimates for solutions of \eqref{EQthisijhhssdinthegdsfg:01}--\eqref{EQthisijhhssdinthegdsfg:03b} in the uniform norm. The main obstacle is to quantify the creation of vorticity at $\partial \HH$, which is expected to become unbounded as $\nu \to 0$, at least very close to the boundary. \par Concerning the validity of the inviscid limit in the energy norm, in the presence of solid boundaries, for smooth initial datum, two types of results are known. First, we have results which make $\nu$-dependent {\em assumptions on the family of solutions} $u$ of \eqref{EQthisijhhssdinthegdsfg:01}--\eqref{EQthisijhhssdinthegdsfg:03b}, and prove that these assumptions imply (a-posteriori, they are equivalent to) the $L^\infty_t L^2_x$ inviscid limit. This program was initiated in the influential paper of Kato~\cite{Kato84b}, who showed that the condition \begin{align} \lim_{\nu \to 0} \int_0^T \!\!\!\! \int_{\{ y\les \nu \}} \nu |\nabla u|^2 dx dy dt \to 0 \label{EQthisijhhssdinthegdsfg:Kato} \end{align} is equivalent to the validity of the strong inviscid limit in the energy norm. Refinements and extensions based on Kato's original argument of introducing a boundary layer corrector were obtained for instance in~\cite{BardosTiti13, ConstantinElgindiIgnatovaVicol17,ConstantinKukavicaVicol15, Kelliher08,Kelliher17,Masmoudi98,TemamWang97b,Wang01}; see also the recent review~\cite{MaekawaMazzucato16} and references therein. These results are important because they yield explicit properties that the sequence of Navier-Stokes solutions must obey as $\nu\to 0$ in order for them to have a strong $L^\infty_t L^2_x$ Euler limit. On the other hand, verifying these conditions based on the knowledge of the initial datum only, is in general an outstanding open problem. We emphasize that to date, even the question of whether the weak $L^2_t L^2_x$ inviscid limit holds (against test functions compactly supported in the interior of the domain), remains open. Conditional results have been established recently in terms of interior structure functions~\cite{ConstantinVicol18,DrivasNguyen18}, or in terms of interior vorticity concentration measures~\cite{ConstantinBrazil18}. \par The second class of results are those which only make {\em assumptions on the initial data} $u_0$, as $\nu \to 0$. In the seminal works~\cite{SammartinoCaflisch98a,SammartinoCaflisch98b}, Sammartino-Caflisch consider initial data $u_0$ which are analytic in both the $x$ and $y$ variables on the entire half space, and are well-prepared, in the sense that $u_0$ satisfies the Prandtl ansatz \eqref{EQthisijhhssdinthegdsfg:Prandtl} below, at time $t=0$. Sammartino-Caflisch do not just prove the strong inviscid limit in the energy norm, but they in fact establish the validity of the Prandtl expansion \begin{align} u(x,y,t) = \bar u(x,y,t) + u^{P}\left(x,\frac{y}{\sqrt{\nu}},t\right) + \OO(\sqrt{\nu}) \label{EQthisijhhssdinthegdsfg:Prandtl} \end{align} for the solution $u$ of \eqref{EQthisijhhssdinthegdsfg:01}--\eqref{EQthisijhhssdinthegdsfg:03b}. Here $\bar u$ denotes the real-analytic solution of the Euler equations, and $u^P$ is the real-analytic solution of the Prandtl boundary layer equations. We refer the reader to~\cite{AlexandreWangXuYang14,DietertGerardVaret19,GerardVaretMasmoudi13,IgnatovaVicol16,KukavicaMasmoudiVicolWong14,KukavicaVicol13,LiuYang16,LombardoCannoneSammartino03,MasmoudiWong15,Oleinik66,SammartinoCaflisch98a} for the well-posedness theory for the Prandtl equations, to~\cite{GerardVaretDormy10,GuoNguyen11,GerardVaretNguyen12,LiuYang17} for the identification of ill-posed regimes, and to~\cite{Grenier00,GrenierGuoNguyen15,GrenierGuoNguyen16,GrenierNguyen17,GrenierNguyen18a} for recent works which show the invalidity of the Prandtl expansion at the level of Sobolev regularity. In~\cite{SammartinoCaflisch98a,SammartinoCaflisch98b} Sammartino-Caflisch carefully analyze the error terms in the expansion \eqref{EQthisijhhssdinthegdsfg:Prandtl}, and show that they remain $\OO(\sqrt{\nu})$ for an $\OO(1)$ time interval, by appealing to real-analyticity and an abstract Cauchy-Kowalevski theorem. This strategy has been proven successful for treating the case of a channel~\cite{LombardoSammartino01,KukavicaLombardoSammartino16} and the exterior of a disk~\cite{CaflischSammartino97}. Subsequently, in a remarkable work~\cite{Maekawa14}, Maekawa proved that the inviscid limit also holds for initial datum whose associated vorticity is Sobolev smooth and is supported at an $\OO(1)$ distance away from the boundary of the domain. The main new device in~\cite{Maekawa14} is the use of the vorticity boundary condition in the case of the half space~\cite{Anderson89,Maekawa13}, using which one may actually establish the validity of the expansion~\eqref{EQthisijhhssdinthegdsfg:Prandtl}. Using conormal Sobolev spaces, the authors of \cite{WangWangZhang17} have obtained an energy based proof for the Caflisch-Sammartino result, while in~\cite{FeiTaoZhang16,FeiTaoZhang18} it is shown that Maekawa's result can also be proven solely using energy methods, in 2D and 3D respectively. More recently, Nguyen-Nguyen have found in~\cite{NguyenNguyen18} a very elegant proof of the Sammartino-Caflisch result, which for the first time completely avoids the usage of Prandtl boundary layer correctors. Instead, Nguyen-Nguyen appeal to the boundary vorticity formulation, precise bounds for the associated Green's function, and an analysis in boundary-layer weighted spaces. In this paper we use a number of estimates from~\cite{NguyenNguyen18}, chief among which are the ones for the Green's function for the Stokes system (see Lemma~\ref{Green:fun} below). Lastly, we mention that in a recent remarkable result~\cite{GerardVaretMaekawaMasmoudi16}, Gerard-Varet-Maekawa-Masmoudi establish the stability in a Gevrey topology in $x$ and a Sobolev topology in $y$, of Euler+Prandtl shear flows (cf.~\eqref{EQthisijhhssdinthegdsfg:Prandtl}), when the Prandtl shear flow is both monotonic and concave. It is worth noting that in all the above cases the Prandtl expansion \eqref{EQthisijhhssdinthegdsfg:Prandtl} is valid, and thus the Kato-criterion~\eqref{EQthisijhhssdinthegdsfg:Kato} holds. However, in general there is a large discrepancy between the question of the vanishing viscosity limit in the energy norm, and the problem of the validity of the Prandtl expansion. It is not clear to which degree these two problems are related. \par Finally, we mention that the vanishing viscosity limit is also known to hold in the presence of certain symmetry assumptions on the initial data, which is maintained by the flow; see e.g.~\cite{BonaWu02,HanMazzucatoNiuWang12,Kelliher09,LopesMazzucatoLopes08,LopesMazzucatoLopesTaylor08,MaekawaMazzucato16,Matsui94,MazzucatoTaylor08} and references therein. This symmetry implies that the influence of the Prandtl layer to the bulk flow is weak, and thus in these situations the vanishing viscosity limit may be established by verifying Kato's criterion~\eqref{EQthisijhhssdinthegdsfg:Kato}. Also, the very recent works~\cite{GerardVaretMaekawa18,GuoIyer18,GuoIyer18a,Iyer18} establish the vanishing viscosity limit and the validity of the Prandtl expansion for the stationary Navier-Stokes equation, in certain regimes. \par The main goal of this paper is to bridge the apparent gap between the Sammartino-Caflisch~\cite{SammartinoCaflisch98a,SammartinoCaflisch98b} and the Maekawa~\cite{Maekawa14} results, by proving in Theorem~\ref{T02} that the inviscid limit in the energy norm holds for initial datum $\omega_0$ which is analytic in a strip of $\OO(1)$ width close to the boundary, and is Sobolev smooth on the complement of this strip. Evidently, this type of data includes the one considered in~\cite{Maekawa14,SammartinoCaflisch98a,SammartinoCaflisch98b}. Informally, one expects analyticity to only be required near the boundary in order to control the catastrophic growth of boundary layer instabilities, and we confirm this intuition. To the best of our knowledge, our result establishes the inviscid limit in the energy norm for the largest class of initial data, in the {\em absence of structural or symmetry assumptions}. Theorem~\ref{T02} is a direct consequence of Theorem~\ref{T01}, which establishes uniform in $\nu$ bounds on the vorticity in a mixed analytic-Sobolev norm. In order to prove Theorem~\ref{T01}, we use the mild vorticity formulation approach of Nguyen-Nguyen, which avoids the explicit use of Prandtl correctors, and instead relies on pointwise estimates on the Green's function for the associated Stokes equation~\cite[Proposition 3.3]{NguyenNguyen18}. The main technical difficulty we need to overcome is the treatment of the layer where the analyticity and the Sobolev regions meet. It is known that analytic functions are not localizable, and that the Biot-Savart law is non-local. Thus, one cannot avoid that the analytic and the Sobolev regions communicate. In order to overcome this difficulty we consider an analyticity radius with respect to both the $x$ and $y$ variables, which vanishes in a precisely controlled time-dependent fashion at an $\OO(1)$ distance from the boundary. Moreover, since we cannot afford a derivative loss in the Sobolev region, this estimate is carried over using an energy method, with error terms arising to the spill into the analytic region. Compared to~\cite{NguyenNguyen18}, we employ several simplifications which provide additional information on the solution of the Navier-Stokes equations in the boundary layer. First, we remove the need for the time dependent weight function, thus not allowing time dependent bursts of vorticity in the secondary boundary layer of size $\sqrt{\nu t}$ from \cite{NguyenNguyen18}. Second, since our solutions are no longer analytic away from the constant sized strip, we no longer require them to decay exponentially as $y\to\infty$. Lastly, the approach considered here allows a wider choice of weights functions in the analytic norm (cf.~Remark~\ref{R04} below) which may be used to provide a detailed information about the degeneration of the vorticity as $\nu\to 0$ in a suitably defined boundary layer. \par This paper is organized as follows. In Section~\ref{sec02} we introduce the analytic norms $X$ and $Y$ and the Sobolev norm $Z$ used in this paper. Section~\ref{sec04} contains the statements and the proofs of our main results, Theorems~\ref{T01} and~\ref{T02}. For this purpose, we also recall there the integral representation of the vorticity formulation of the Navier-Stokes equations, and we collect in Lemmas~\ref{lem:main:X},~\ref{lem:main:Y},~\ref{lem:main:N}, and~\ref{L12} the main analytic and Sobolev estimates needed to establish our main results. These lemmas are then proven in Section~\ref{secX} for the $X$-norm, Section~\ref{secL} for the $Y$-norm, Section~\ref{sec-N} for the nonlinear terms, and Section~\ref{sec-sobolev} for the Sobolev norm. \par \startnewsection{Functional setting}{sec02} \subsection{Notation} \begin{itemize} \item We use $f_\xi(y) \in \CC$ to denote the Fourier transform of $f(x,y)$ with respect to the $x$ variable at frequency $\xi \in \ZZ$, i.e. $f(x,y) = \sum_{\xi \in \ZZ} f_\xi(y) e^{i x \xi }$. We also use the notation $u_{i,\xi}(y)$ or $(u_{i})_{\xi}(y)$ for the Fourier transform of $u_i$ in $x$ for $i=1, 2$. \item $\Re z$ and $\Im z$ stand for the real and imaginary parts of the complex number $z$. \par \item For $\mu >0$ we define the complex domain $\Omega_\mu=\{z\in \CC: 0 \leq \Re z \leq 1, |\Im z|\le \mu \Re z\} \cup \{z\in \CC: 1 \leq \Re z \leq 1+\mu, |\Im z|\le 1 + \mu - \Re z \}$, which is represented in Figure~\ref{fig:Omega}. We assume that $\mu<\mu_0$, where $\mu_0\in(0,1/10]$ is a fixed constant. \begin{center} \begin{figure}[h!] \includegraphics[width=.5\textwidth]{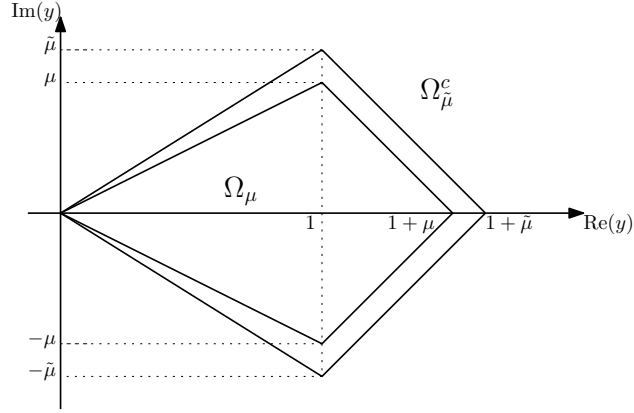} \caption{Representation of the complex domains $\Omega_\mu$ and $\Omega_{\tilde \mu}$ for $0 < \mu < \tilde \mu$.} \label{fig:Omega} \end{figure} \end{center} \item For $y\in \Omega_\mu$ we represent exponential terms of the form $e^{\ee(1+\mu-\Re y)_+|\xi|}$ simply as $e^{\ee(1+\mu-y)_+|\xi|}$. That is, in order to simplify the notation we write $y$ instead of $\Re y$ at the exponential. \item We assume that $\nu\in(0,1]$ and $t \in (0,1]$ throughout. \item The implicit constants in $\les$ depend only on $\mu_0$ and $\theta_0$ (cf.~\eqref{EQthisijhhssdinthegdsfg101}), and are thus universal. \end{itemize} \par \subsection{Norms} In this paper, we use norms which capture the features of a solution that is analytic near the boundary and is $H^4$ smooth at an $\OO(1)$ distance away from it. We include two types of analytic norms: the $L^\infty$ based $X$ norm and the $L^1$ based $\YY$ norm, defined in \eqref{EQthisijhhssdinthegdsfg17} and \eqref{EQthisijhhssdinthegdsfg25} respectively. Before these definitions we introduce some notation. \par For a sufficiently large constant $\gamma>0$ to be determined in the proof, which depends only on $\mu_0$ and the size of the initial datum via the constant $M$ in \eqref{EQthisijhhssdinthegdsfg:omega:0}, throughout the paper we let $t$ obey \begin{align} t \in \left(0, \frac{\mu_0}{2\gamma} \right)\,. \label{EQthisijhhssdinthegdsfg:time:restrict} \end{align} \par In order to define the weighted $L^\infty$ based analytic norm $X$, we introduce the weight function $w \colon [0,1+\mu_0] \to [0,1]$ given by \begin{equation} w(y) = \begin{cases} \sqrtnu &, 0<y\leq \sqrtnu \\ y&, \sqrtnu\leq y\leq 1\\ 1&, 1\leq y\leq 1+\mu_0 \end{cases} \label{EQthisijhhssdinthegdsfg55} \end{equation} and use it to define a weighted analytic norm, with respect to $y$, as \begin{equation} \lVert f\rVert_{\ZL_{\mu,\nu}} = \sup_{y\in\Omega_\mu} \ww(\Re y) |f(y)| \llabel{Ko svojo moč narbolj vihar razklada,
okrog vrat straža na pomoč zavpije,
in vstane šum, de mož za možam pada.
Ko se neurnik o povodnji vlije,
iz hriba strmega v doline plane,
z derečimi valovami ovije,
kar se mu zoper stavi, se ne ugane,
in ne počije préd, de jez omaga;
tak vrže se Valjhun na nekristjane.
Ne jenja préd, dokler ni zadnja sraga
krvi prelita, dokler njih kdo sope,
ki jim bila je vera čez vse draga.
Ko zor zasije na mrličov trope,
ležé, k' ob ajde žetvi, al pšenice
po njivah tam ležé snopovja kope.
Leži kristjanov več od polovice,
med njimi, ki so padli za malike,
Valjhun zastonj tam iše mlado licesldkfjg;ls dslksfgj s;lkdfgj ;sldkfjg ;sldkfgj s;ldfkgj s;ldfkgj gfisasdoifaghskcx,.bvnliahglidhgsd gs sdfgh sdfg sldfg sldfgkj slfgj sl;dfgkj sl;dfgjk sldgfkj jiurwe alskjfa;sd fasdfEQthisijhhssdinthegdsfg15} \end{equation} for a complex function $f$ of the variable $y\in \Omega_\mu$. Throughout the paper, in order to simplify the notation we write $w(y)$ instead of $w(\Re y)$. \par Let $\ee \in (0,1)$ be a sufficiently small constant to be determined below, which only depends on the parameter $\theta_0$ in \eqref{EQthisijhhssdinthegdsfg101}. Moreover, let $ \alpha\in\left( 0, \frac{1}{2} \right) \ $ be a fixed constant. Using the $\ZL_{\mu,\nu}$ norm, we define \begin{equation} \llabel{Mož in oblakov vojsko je obojno
končala temna noč, kar svetla zarja
zlatí z rumenmi žarki glavo trojno
snežnikov kranjskih sivga poglavarja,
Bohinjsko jezero stoji pokojno,
sledu ni več vunanjega viharja;
al somov vojska pod vodó ne mine,
in drugih roparjov v dnu globočine. 
Al jezero, ki na njega pokrajni
stojiš, ni, Črtomír! podoba tvoja? -
To noč je jenjal vojske šum vunajni,
potihnil ti vihar ni v prsih boja;
le hujši se je zbudil črv nekdajni,
ak prav uči me v revah skušnja moja,
bolj grize, bolj po novi krvi vpije,
požrešniši obupa so harpíje.
sldkfjg;ls dslksfgj s;lkdfgj ;sldkfjg ;sldkfgj s;ldfkgj s;ldfkgj gfisasdoifaghskcx,.bvnliahglidhgsd gs sdfgh sdfg sldfg sldfgkj slfgj sl;dfgkj sl;dfgjk sldgfkj jiurwe alskjfa;sd fasdfEQthisijhhssdinthegdsfg18} \lVert f \rVert_{X_\mu} = \sum_{\xi \in \ZZ} \lVert e^{\ee(1+\mu-y)_+|\xi|}f_\xi\rVert_{\ZL_{\mu,\nu}} \, , \end{equation} and then, with $t$ as in \eqref{EQthisijhhssdinthegdsfg:time:restrict}, we define the analytic $X$ norm as \begin{align} \lVert f\rVert_{X(t)} = \sup_{\mu<\mu_0-\gamma t} &\biggl( \sum_{0\le i+j\le1}\lVert \partial_x^i(y\partial_{y})^j f\rVert_{X_\mu} +\sum_{i+j=2} (\mu_0-\mu-\gamma t)^{1/2+\alpha}\lVert \partial_x^i(y\partial_{y})^j f\rVert_{X_\mu} \biggr) \, . \label{EQthisijhhssdinthegdsfg17} \end{align} We state in Lemma~\ref{ana:rec:x} a useful analyticity recovery estimate for the $X_\mu$ norm. \par \begin{remark} \label{R04} Throughout the paper, the following properties of the weight are needed: \begin{itemize} \item[(a)] $w(y) \les w(z) $ for $y\le z$, \item[(b)] $w(y) \les w(z)$ for $0 < y/2 \leq z \leq 1+\mu_0$, \item[(c)] $\sqrt{\nu} \les w(y) \les 1$ for $y \in [0,1+\mu_0]$, \item[(d)] $y \les w(y)$ for $y\in [0,1+\mu_0]$ \item[(e)] $w(y) e^{-\frac{y}{C \sqrt{\nu}}} \les \sqrt{\nu}$ for $y\in [0,1+\mu_0]$ where $C>0$ is sufficiently large constant, depending only on $\theta_0$ in \eqref{EQthisijhhssdinthegdsfg101}. \end{itemize} It is easy to check that the weight $w$ in \eqref{EQthisijhhssdinthegdsfg55} satisfies (a)--(e). We justify (e) by simply distinguishing the cases $y\leq\sqrtnu$ and $y\geq \sqrtnu$ separately. \par Note that, by the above statement, there are other weights for which Theorem~\ref{T01} holds. For instance, we may take \begin{equation} w(y) = \min \{ \sqrtnu e^{\frac{y}{C\sqrtnu}},1 \} \llabel{sldkfjg;ls dslksfgj s;lkdfgj ;sldkfjg ;sldkfgj s;ldfkgj s;ldfkgj gfisasdoifaghskcx,.bvnliahglidhgsd gs sdfgh sdfg sldfg sldfgkj slfgj sl;dfgkj sl;dfgjk sldgfkj jiurwe alskjfa;sd fasdfEQthisijhhssdinthegdsfg57} \end{equation} for a sufficiently large universal constant $C$. Note that this weight is larger than \eqref{EQthisijhhssdinthegdsfg55}, up to a multiplicative constant, and that it satisfies (a)--(e) above. \end{remark} \par Next, we define the analytic $L^1$ based norm. For a complex valued function $f$ defined on $\Omega_\mu$, let \begin{equation} \llabel{sldkfjg;ls dslksfgj s;lkdfgj ;sldkfjg ;sldkfgj s;ldfkgj s;ldfkgj gfisasdoifaghskcx,.bvnliahglidhgsd gs sdfgh sdfg sldfg sldfgkj slfgj sl;dfgkj sl;dfgjk sldgfkj jiurwe alskjfa;sd fasdfEQthisijhhssdinthegdsfg24} \lVert f\rVert_{\SL_\mu} = \sup_{0\le\theta<\mu} \lVert f\rVert_{L^1(\partial\Omega_\theta)} \, . \end{equation} Using $\SL_\mu$ we introduce \begin{equation} \llabel{sldkfjg;ls dslksfgj s;lkdfgj ;sldkfjg ;sldkfgj s;ldfkgj s;ldfkgj gfisasdoifaghskcx,.bvnliahglidhgsd gs sdfgh sdfg sldfg sldfgkj slfgj sl;dfgkj sl;dfgjk sldgfkj jiurwe alskjfa;sd fasdfEQthisijhhssdinthegdsfg27} \lVert f \rVert_{\YY_\mu} = \sum_\xi \lVert e^{\ee(1+\mu-y)_+|\xi|}f_\xi\rVert_{\SL_\mu} \end{equation} and then for $t$ as in \eqref{EQthisijhhssdinthegdsfg:time:restrict} we define the analytic $Y$ norm by \begin{align} \label{EQthisijhhssdinthegdsfg25} \lVert f\rVert_{\YY(t)}= \sup_{\mu<\mu_0-\gamma t} &\biggl(\sum_{0\le i+j\le1} \norm{\partial_x^i(y\partial_{y})^jf}_{Y_\mu} +\sum_{i+j=2} (\mu_0-\mu-\gamma t)^\alpha \norm{\partial_x^i(y\partial_{y})^jf}_{Y_\mu} \biggr) \,. \end{align} Note the different time weights when comparing the highest order terms in \eqref{EQthisijhhssdinthegdsfg17} and \eqref{EQthisijhhssdinthegdsfg25}. We refer to Lemma~\ref{ana:rec:l} for a useful analyticity recovery estimate for the $Y_\mu$ norm. \par We also define a weighted $L^2$ norm (with respect to both $x$ and $y$) by \begin{equation} \llabel{sldkfjg;ls dslksfgj s;lkdfgj ;sldkfjg ;sldkfgj s;ldfkgj s;ldfkgj gfisasdoifaghskcx,.bvnliahglidhgsd gs sdfgh sdfg sldfg sldfgkj slfgj sl;dfgkj sl;dfgjk sldgfkj jiurwe alskjfa;sd fasdfEQthisijhhssdinthegdsfg19} \lVert f\rVert_{S}^2 = \norm{y f}_{L^2(y\geq 1/2)}^2 = \sum_\xi \lVert y f_\xi\rVert_{L^2(y\ge 1/2)}^2 \end{equation} and a weighted version of the Sobolev $H^3$ norm as \begin{align} \norm{f}_{\ZZZ} = \sum_{0\le i+j\le3} \norm{\partial_x^i \partial_y^j f}_S \, . \llabel{Znan ribič privesla od une stráni,
opomni ga, kak sam sebe pozabi,
kako povsod ga išejo kristjani,
kak z vjetimi Valjhun srditi rabi,
prijazno delj mu tam ostati brani,
stopiti k sebi ga v čolnič povabi,
de ga pripelje v varniši zavetje;
vda Črtomír se v to, kar ribič svétje.sldkfjg;ls dslksfgj s;lkdfgj ;sldkfjg ;sldkfgj s;ldfkgj s;ldfkgj gfisasdoifaghskcx,.bvnliahglidhgsd gs sdfgh sdfg sldfg sldfgkj slfgj sl;dfgkj sl;dfgjk sldgfkj jiurwe alskjfa;sd fasdfEQthisijhhssdinthegdsfg54} \end{align} Further below, it is convenient to also use a weighted $L^2$ in $y$, $\ell^1$ in $\xi$ norm $S_\mu$ given by \begin{equation} \llabel{sldkfjg;ls dslksfgj s;lkdfgj ;sldkfjg ;sldkfgj s;ldfkgj s;ldfkgj gfisasdoifaghskcx,.bvnliahglidhgsd gs sdfgh sdfg sldfg sldfgkj slfgj sl;dfgkj sl;dfgjk sldgfkj jiurwe alskjfa;sd fasdfEQthisijhhssdinthegdsfg20} \lVert f \rVert_{S_\mu} = \sum_\xi \lVert y f_\xi\rVert_{L^2(y\ge 1+\mu)} \, . \end{equation} Lastly, for fixed $\mu_0,\gamma>0$, and with $t$ which obeys \eqref{EQthisijhhssdinthegdsfg:time:restrict}, we introduce the notation \begin{align} \NORM{\omega}_t = \norm{\omega}_{X(t)} + \norm{\omega}_{\YY(t)} + \norm{\omega}_{\ZZZ} \notag \end{align} for the {\em cumulative} time-dependent norm used in this paper. \par \startnewsection{Main results}{sec04} \par We denote by $\omega = \nabla^\perp \cdot u$ the scalar vorticity associated to the the velocity field $u$, where $\nabla^{\perp}=(-\partial_{y},\partial_{x})$. The following is the main result of the paper. \par \cole \begin{theorem} \label{T01} Let $\mu_0>0$ and assume that $\omega_0$ is such that \begin{align} \sum_{i+j\leq 2} \norm{\partial_x^i (y \partial_y)^j \omega_0}_{X_{\mu_0}} + \sum_{i+j\leq 2} \norm{\partial_x^i (y \partial_y)^j \omega_0}_{Y_{\mu_0}} + \sum_{i+j\leq 4} \norm{\partial_x^i (y \partial_y)^j \omega_0}_{S} \leq M <\infty\,. \label{EQthisijhhssdinthegdsfg:omega:0} \end{align} Then there exists a $\gamma>0$ and a time $T>0$ depending on $M$ and $ \mu_0$, such that the solution $\omega$ to the system \eqref{EQthisijhhssdinthegdsfg:vot} satisfies \begin{equation} \label{EQthisijhhssdinthegdsfg35} \sup_{t\in [0,T]} \NORM{\omega(t)}_t \le C M \, . \end{equation} \end{theorem}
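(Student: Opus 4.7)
The plan is to close the theorem by a continuation argument for the cumulative norm $\NORM{\omega(t)}_t$, using the mild integral formulation of the vorticity equation together with the four norm estimates collected in Lemmas~\ref{lem:main:X}, \ref{lem:main:Y}, \ref{lem:main:N}, and~\ref{L12}. First, I would rewrite the vorticity equation \eqref{EQthisijhhssdinthegdsfg:vot} in the Duhamel form against the Stokes Green's function $G$ provided by Lemma~\ref{Green:fun}, schematically
\[
\omega(t) = \int_\HH G(t,\cdot,\cdot')\omega_0\,dy' - \int_0^t\!\!\int_\HH G(t-s,\cdot,\cdot')\,\nabla\cdot(u\omega)(s)\,dy'ds,
\]
with $u$ recovered from $\omega$ by the Biot-Savart law on $\HH$. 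This is the device of Nguyen-Nguyen~\cite{NguyenNguyen18} which replaces any use of Prandtl correctors by precise pointwise information on $G$.

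Second, I would run a bootstrap. Set $\Phi(t)=\sup_{s\in[0,t]}\NORM{\omega(s)}_s$ and assume $\Phi(t)\le 2CM$ on the maximal interval where this holds; the initial hypothesis \eqref{EQthisijhhssdinthegdsfg:omega:0} together with the embeddings implicit in the definitions of $X$, $Y$, $Z$ yields $\NORM{\omega_0}_0\le CM$, so the interval is non-empty by continuity. On this interval, Lemmas~\ref{lem:main:X} and~\ref{lem:main:Y} control the linear semigroup piece of the $X_\mu$ and $Y_\mu$ norms by $\NORM{\omega_0}_0$ for each $\mu<\mu_0-\gamma t$, and the nonlinear piece is then estimated by Lemma~\ref{lem:main:N} in terms of $\int_0^t\Phi(s)^2\,ds$ modulo the derivative-loss factor $(\mu_0-\mu-\gamma s)^{-1/2-\alpha}$ appearing at the top order. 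The analyticity recovery estimates of Lemmas~\ref{ana:rec:x} and~\ref{ana:rec:l} convert this derivative loss into a factor $\gamma^{-\eta}$ for some $\eta>0$, via the Cauchy-Kowalewski type trade between shrinking analyticity radius and an $s$-integral, once $\gamma$ is chosen large relative to $M$.

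Third, for the Sobolev component $\norm{\omega(t)}_Z$, I would apply Lemma~\ref{L12}, which is an energy estimate performed directly on the vorticity equation in the region $y\ge 1/2$, where viscosity is non-singular and four-derivative control is consistent with the assumption on $\omega_0$ in $S$. The Biot-Savart law couples this estimate to the analytic region, so error terms that spill into the strip next to the boundary must be dominated by the $X(t)$ and $Y(t)$ norms, which are themselves bounded by $\Phi(t)$. Combining all four bounds produces an inequality of the form
\[
\Phi(t)\le CM + C\gamma^{-\eta}\,\Phi(t)^2 + Ct\,\Phi(t)^2,
\]
and selecting $\gamma$ large and $T>0$ small, both depending only on $M$ and $\mu_0$, closes the bootstrap and delivers \eqref{EQthisijhhssdinthegdsfg35}.

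The main obstacle is the analysis in the interface layer $y\approx 1$, where the analytic domain $\Omega_\mu$ ends and the Sobolev region begins. On the analytic side, integration by parts in the mild formulation produces boundary contributions on the tilted boundary of $\Omega_\mu$, while on the Sobolev side the energy estimate acquires commutator and cutoff terms at $y=1/2$; the two must be glued without costing a Sobolev derivative, which is the reason \eqref{EQthisijhhssdinthegdsfg:omega:0} asks for four derivatives in the $S$ norm while only two in the analytic norms. The design of $\Omega_\mu$ with its sloped portion past $\Re y=1$, the choice of the exponential factor $e^{\eps(1+\mu-y)_+|\xi|}$ which degenerates in precisely this region, and the overlap of the $S$ and $X_\mu$, $Y_\mu$ supports in $1/2\le y\le 1+\mu_0$, are tailored exactly so that all such transition terms are absorbed by $\Phi(t)$ after using the properties (a)--(e) of the weight $w$ listed in Remark~\ref{R04}. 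Verifying this absorption carefully, which is the content hidden inside Lemmas~\ref{lem:main:X}--\ref{L12}, is where I expect the technical heart of the argument to lie.
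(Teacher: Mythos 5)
Your proposal follows essentially the same route as the paper: the mild vorticity formulation against the Stokes Green's function, the four main lemmas applied with the shrinking analyticity radius $\mu_0-\gamma t$, the Cauchy--Kowalewski-type time integration producing the crucial $\gamma^{-1/2}$ smallness factor, the Gr\"onwall/energy bound for the Sobolev part, and a barrier (bootstrap) argument closed by taking $\gamma$ large with $T=\mu_0/(2\gamma)$. The only cosmetic differences are that the paper keeps the boundary-trace term $\int_0^t G(t-s,y,0)B(s)\,ds$ explicit in the Duhamel formula and justifies the a~priori estimates via a regularized system rather than a continuity argument.
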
 \colb \par The above result immediately implies the following statement. \cole \begin{theorem} \label{T02} Let $\omega_0$ be as in Theorem~\ref{T01}. Denote by $u^\nu$ the solution of the Navier-Stokes equation \eqref{EQthisijhhssdinthegdsfg:01}--\eqref{EQthisijhhssdinthegdsfg:03b} with viscosity $\nu>0$, defined on $[0,T]$, where $T$ is as given in Theorem~\ref{T01}. Also, denote by $\bar u$ the solution of the Euler equations with initial datum $\omega_0$, which is defined globally in time. Then we have \begin{align} \lim_{\nu \to 0} \sup_{t \in [0,T]} \norm{u^\nu(t) - \bar u(t)}_{L^2(\HH)} = 0 \,. \notag \end{align} \end{theorem}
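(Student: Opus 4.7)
The plan is to deduce Theorem~\ref{T02} directly from the uniform bound supplied by Theorem~\ref{T01}, by verifying Kato's criterion~\eqref{EQthisijhhssdinthegdsfg:Kato}. The strategy is to convert $\NORM{\omega^\nu(t)}_t \le CM$, which is independent of $\nu$, into a sufficiently slow-growing in $\nu$ bound for $\|\omega^\nu\|_{L^2(\HH)}$, and then to invoke the standard identity $\|\nabla u^\nu\|_{L^2(\HH)}^2 = \|\omega^\nu\|_{L^2(\HH)}^2$, which holds for 2D divergence-free vector fields with the no-slip boundary condition. The Euler solution $\bar u$ has sufficient regularity on $[0,T]$ (for instance $\bar u \in L^\infty(0,T;W^{1,\infty}(\HH))$, which follows from $\omega_0 \in H^4$ together with classical 2D Euler well-posedness) for Kato's theorem~\cite{Kato84b} to apply, yielding the desired strong convergence in $L^\infty(0,T; L^2(\HH))$.

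To produce the vorticity $L^2$-bound, I would split $\HH = (\TT \times [0,1]) \cup (\TT \times [1,\infty))$. On the outer region the $Z$-norm directly controls $\|\omega^\nu\|_{L^2}$ through the $S$-norm with weight $y$. On the inner region, where the weight $w$ from \eqref{EQthisijhhssdinthegdsfg55} degenerates to $\sqrtnu$ at the boundary, the $X$-norm only furnishes $\|\omega_\xi^\nu\|_{L^\infty([0,1])} \lesssim \nu^{-1/2}$, while the $Y$-norm provides uniform $L^1$-control. Applying the interpolation inequality $\|\omega_\xi^\nu\|_{L^2([0,1])}^2 \le \|\omega_\xi^\nu\|_{L^\infty([0,1])}\, \|\omega_\xi^\nu\|_{L^1([0,1])}$ Fourier mode by Fourier mode, and then summing over $\xi \in \ZZ$ using the exponential weight built into $X$ and $Y$, I expect to obtain $\|\omega^\nu\|_{L^2(\TT \times [0,1])}^2 \lesssim \nu^{-1/2}\, \NORM{\omega^\nu}_t^2$.

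Combining the two regions, $\nu \int_0^T \|\omega^\nu(t)\|_{L^2(\HH)}^2\, dt \lesssim T(\nu + \nu^{1/2}) \to 0$ as $\nu \to 0$, which a fortiori implies Kato's criterion~\eqref{EQthisijhhssdinthegdsfg:Kato} on the strip $\{y \le c\nu\}$. The main delicate point is that the vorticity $L^2$-norm is itself \emph{not} uniformly bounded in $\nu$: it may diverge as the boundary layer of thickness $\sqrtnu$ concentrates. However, the weight structure in the $X$ and $Y$ norms from Theorem~\ref{T01} is precisely engineered so that this divergence is no worse than $\nu^{-1/4}$, which is tame enough to be absorbed by the prefactor $\nu$ in Kato's integral and leave a net smallness of order $\nu^{1/2}$. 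No Prandtl-type corrector is required at this final step, as all the boundary layer analysis has been absorbed into Theorem~\ref{T01} itself.
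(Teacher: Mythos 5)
Your proposal is correct and takes essentially the same route as the paper: reduce to Kato's criterion via $\|\nabla u^\nu\|_{L^2(\HH)}=\|\omega^\nu\|_{L^2(\HH)}$, split the half-plane at an $\OO(1)$ height, bound the outer part by the Sobolev ($S$) part of the norm, and bound the inner part by the modewise $L^\infty$--$L^1$ interpolation together with $w(y)\gtrsim\sqrt{\nu}$, which converts the uniform bound of Theorem~\ref{T01} into a net $\OO(\sqrt{\nu})$ for the total dissipation. The only cosmetic difference is the location of the splitting height ($y=1$ versus the paper's $y=1/2$), which is immaterial.
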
 \colb \par The proof of Theorem~\ref{T01} is given in Section~\ref{sec05}, while the proof of Theorem~\ref{T02} is given in Section~\ref{sec:inviscid}. \par \begin{remark} \label{R05} Note that the condition on the initial datum in both theorems depends on $\nu$ since the first norm in \eqref{EQthisijhhssdinthegdsfg:omega:0}, the $X$ norm, depends on it. However, it is easy to find sufficient $\nu$-independent conditions which guarantee the bound. For instance, by using $w(y) \leq 1$ we see that a sufficient condition for \begin{equation} \sum_{i+j\leq 2} \norm{\partial_x^i (y \partial_y)^j \omega_0}_{X_{\mu_0}} \leq \frac{M}{3} \llabel{Želi dat Črtomír mu povračilo,
al v vojski dnarji so bili razdani;
de Staroslav, se spomni, z Bogomilo
mu v skrivnem kraji tovor zláta hrani,
nju poiskati da mu naročilo,
in da mu prstan sámo njima znani,
de bo pri njima storil mu resnico; -
prinesti zláta reče četrtníco.sldkfjg;ls dslksfgj s;lkdfgj ;sldkfjg ;sldkfgj s;ldfkgj s;ldfkgj gfisasdoifaghskcx,.bvnliahglidhgsd gs sdfgh sdfg sldfg sldfgkj slfgj sl;dfgkj sl;dfgjk sldgfkj jiurwe alskjfa;sd fasdfEQthisijhhssdinthegdsfg48} \end{equation} to hold is that \begin{align} \sum_{i+j\leq 2} \sum_{\xi \in \ZZ} \biggl( \sup_{y\in\Omega_{\mu_0}} \bigl| e^{\ee(1+\mu_0-y)_+|\xi|}\partial_x^i (y \partial_y)^j \omega_{0,\xi}(y)\bigr| \biggr) \leq \frac{M}{C} \, , \label{EQthisijhhssdinthegdsfg52} \end{align} for a sufficiently large universal constant $C>0$. A sufficient condition for \eqref{EQthisijhhssdinthegdsfg52} is \begin{align} \sum_{\xi \in \ZZ} \biggl( \sup_{y\in\Omega_{\bar \mu_0}} \bigl| e^{\ee(1+\bar\mu_0-y)_+|\xi|}\omega_{0,\xi}(y)\bigr| \biggr) \leq \frac{M}{C} \llabel{sldkfjg;ls dslksfgj s;lkdfgj ;sldkfjg ;sldkfgj s;ldfkgj s;ldfkgj gfisasdoifaghskcx,.bvnliahglidhgsd gs sdfgh sdfg sldfg sldfgkj slfgj sl;dfgkj sl;dfgjk sldgfkj jiurwe alskjfa;sd fasdfEQthisijhhssdinthegdsfg53} \end{align} where $\bar \mu_0>\mu_0$. \end{remark} \par \subsection{Vorticity formulation} In this paper, we use the vorticity formulation of the Navier-Stokes equations~\eqref{EQthisijhhssdinthegdsfg:01}--\eqref{EQthisijhhssdinthegdsfg:03b}. Taking the curl of the momentum equation~\eqref{EQthisijhhssdinthegdsfg:01}, i.e. by applying $\nabla^\perp \cdot$ to it, gives \begin{equation} \omega_t + u\cdot\nabla\omega -\nu\Delta\omega =0 \, , \label{EQthisijhhssdinthegdsfg:vot} \end{equation} where $u$ is recovered by the Biot-Savart law $u=\nabla^{\perp}\Delta^{-1}\omega$. The boundary condition in this setting was introduced in \cite{Anderson89,Maekawa13,Maekawa14} and is given by \begin{equation} \nu(\partial_{y}+|\partial_{x}|)\omega=\partial_{y}\Delta^{-1}(u\cdot\nabla\omega)|_{y=0} \,. \label{EQthisijhhssdinthegdsfg:bdry:vot} \end{equation} The condition \eqref{EQthisijhhssdinthegdsfg:bdry:vot} follows from $\partial_{t}u_1|_{y=0} =0$, the Biot-Savart law, and the vorticity equation \eqref{EQthisijhhssdinthegdsfg:vot}. \par \subsection{Integral representation of the solution to the Navier-Stokes equations}\label{sec03} For $\xi \in \ZZ$, denote by \begin{equation} N_\xi(s, y) = - (u\cdot\nabla\omega)_\xi(s, y) \llabel{sldkfjg;ls dslksfgj s;lkdfgj ;sldkfjg ;sldkfgj s;ldfkgj s;ldfkgj gfisasdoifaghskcx,.bvnliahglidhgsd gs sdfgh sdfg sldfg sldfgkj slfgj sl;dfgkj sl;dfgjk sldgfkj jiurwe alskjfa;sd fasdfEQthisijhhssdinthegdsfg59} \end{equation} the Fourier transform in the $x$ variable of the nonlinear term in the vorticity formulation of the Navier-Stokes system. Also, let \begin{equation} B_\xi(s)=(\partial_{y}\Delta^{-1}(u\cdot\nabla\omega))_\xi(s)|_{y=0} = - (\partial_{y}\Delta_\xi^{-1}N_\xi(s))|_{y=0} \, , \llabel{sldkfjg;ls dslksfgj s;lkdfgj ;sldkfjg ;sldkfgj s;ldfkgj s;ldfkgj gfisasdoifaghskcx,.bvnliahglidhgsd gs sdfgh sdfg sldfg sldfgkj slfgj sl;dfgkj sl;dfgjk sldgfkj jiurwe alskjfa;sd fasdfEQthisijhhssdinthegdsfg60} \end{equation} where \begin{equation} \Delta_\xi= -\xi^2+\partial_{y}^2 \llabel{sldkfjg;ls dslksfgj s;lkdfgj ;sldkfjg ;sldkfgj s;ldfkgj s;ldfkgj gfisasdoifaghskcx,.bvnliahglidhgsd gs sdfgh sdfg sldfg sldfgkj slfgj sl;dfgkj sl;dfgjk sldgfkj jiurwe alskjfa;sd fasdfEQthisijhhssdinthegdsfg179} \end{equation} is considered with a Dirichlet boundary condition at $y=0$. After taking a Fourier transform in the tangential $x$ variable, the system~\eqref{EQthisijhhssdinthegdsfg:vot}--\eqref{EQthisijhhssdinthegdsfg:bdry:vot} may be rewritten as \begin{align} \label{EQthisijhhssdinthegdsfg:Stokes} \partial_{t}\omega_\xi - \nu\Delta_\xi\omega_\xi &= N_\xi \nonumber\\ \nu(\partial_{y}+|\xi|)\omega_\xi &= B_\xi \,, \end{align} for $\xi \in \ZZ$. Denoting the Green's function for this system by $G_\xi(t, y, z)$, we may represent the solution of this system as \begin{align} \label{EQthisijhhssdinthegdsfg:kernel:est} \omega_\xi (t,y) &= \int_0^\infty G_\xi(t, y, z)\omega_{0\xi}(z)\,dz + \int_0^t \!\! \int_0^\infty G_\xi(t-s, y, z)N_\xi(s, z) \,dzds + \int_0^t G_\xi(t-s, y,0)B_\xi(s)\,ds , \end{align} where $\omega_{0\xi}(z)$ is the Fourier transform of the initial data. A proof of this formulation can be found in \cite{NguyenNguyen18}. \par The next lemma gives an estimate of the Green's function $G_\xi$ of the Stokes system. For its proof, we refer to~\cite[Proposition 3.3 and Section 3.3]{NguyenNguyen18}. \par \cole \begin{lemma}[\cite{NguyenNguyen18}] \label{Green:fun} The Green's function $G_\xi$ for the system~\eqref{EQthisijhhssdinthegdsfg:Stokes} is given by \begin{equation} \label{EQthisijhhssdinthegdsfg30} G_\xi = \tilde H_\xi + R_\xi \, , \end{equation} where \begin{align} \label{EQthisijhhssdinthegdsfg31} \tilde H_\xi(t, y ,z) = \frac{1}{\sqrt{\nu t}}\left(e^{-\frac{(y-z)^2}{4\nu t}} + e^{-\frac{(y+z)^2}{4\nu t}} \right)e^{-\nu\xi^2t} \end{align} is the one dimensional heat kernel for the half space with homogeneous Neumann boundary condition. The residual kernel $R_\xi$ has the property $(\partial_y - \partial_z) R_\xi(t,y,z) = 0 $, meaning that it is a function of $y+z$, and it satisfies the bounds \begin{equation} |\partial_{z}^{k} R_\xi(t, y, z)| \lesssim b^{k+1} e^{-\theta_0 b(y+z)} + \frac{1}{(\nu t)^{(k+1)/2}} e^{-\theta_0\frac{(y+z)^2}{\nu t}}e^{-\frac{\nu\xi^2t}{8}} \comma k\in{\mathbb N}_0 \, , \label{EQthisijhhssdinthegdsfg101} \end{equation} where $\theta_0>0$ is a constant independent of $\nu$. The boundary remainder coefficient $b$ in \eqref{EQthisijhhssdinthegdsfg101} is given by \begin{equation} b=b(\xi, \nu) = |\xi| +\frac{1}{\sqrtnu} \, . \llabel{sldkfjg;ls dslksfgj s;lkdfgj ;sldkfjg ;sldkfgj s;ldfkgj s;ldfkgj gfisasdoifaghskcx,.bvnliahglidhgsd gs sdfgh sdfg sldfg sldfgkj slfgj sl;dfgkj sl;dfgjk sldgfkj jiurwe alskjfa;sd fasdfEQthisijhhssdinthegdsfg259} \end{equation} The implicit constant in \eqref{EQthisijhhssdinthegdsfg101} depends only on $k$ and $\theta_0$. \end{lemma}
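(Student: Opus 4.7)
The plan is to reduce the problem to a one-dimensional half-line heat equation with a Robin boundary condition and extract the estimate by a resolvent/Laplace transform analysis. After Fourier transform in $x$, the system \eqref{EQthisijhhssdinthegdsfg:Stokes} is the heat equation $\partial_t - \nu \partial_y^2 + \nu \xi^2$ on $y>0$, with the boundary condition $\nu(\partial_y + |\xi|)\cdot|_{y=0}= 0$ for the Green's function (zero boundary data and a Dirac initial datum $\delta(y-z)$). The first step is to introduce the decomposition $G_\xi = \tilde H_\xi + R_\xi$, with $\tilde H_\xi$ from \eqref{EQthisijhhssdinthegdsfg31}. By the even reflection, $\tilde H_\xi$ solves the heat equation, has initial datum $\delta(y-z)$, and satisfies $\partial_y \tilde H_\xi|_{y=0}=0$. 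Therefore $R_\xi$ must solve the homogeneous heat equation with vanishing initial datum and the inhomogeneous Robin condition
\begin{equation*}
(\partial_y + |\xi|) R_\xi\bigr|_{y=0} = - |\xi|\, \tilde H_\xi(t,0,z) = -\frac{2|\xi|}{\sqrt{4\pi \nu t}}\, e^{-z^2/4\nu t}\, e^{-\nu \xi^2 t}.
\end{equation*}

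The second step is to observe that the structure of the problem forces $R_\xi(t,y,z) = \rho_\xi(t, y+z)$: the source on the boundary is a function of $z$ alone, and among solutions vanishing at $t=0$, the half-line Robin problem for the heat equation produces a one-parameter family depending only on the combined variable $y+z$. This identity is precisely the property $(\partial_y - \partial_z)R_\xi = 0$ asserted in the statement, and it is easily verified by uniqueness. Thus it suffices to solve for $\rho_\xi$ on the half-line $r>0$.

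The third step is to compute $\rho_\xi$ via Laplace transform in $t$ with parameter $\lambda$. The interior equation becomes $-\nu \partial_r^2 \hat\rho + (\lambda + \nu\xi^2)\hat\rho = 0$ for $r>0$, so that $\hat\rho(\lambda, r) = A(\lambda) e^{-r \sqrt{(\lambda + \nu\xi^2)/\nu}}$, with $A(\lambda)$ determined by inserting this ansatz into the Laplace transform of the boundary condition; this produces an explicit rational factor in $\sqrt{(\lambda+\nu\xi^2)/\nu}$ and $|\xi|$. Inverting the Laplace transform by contour deformation yields an explicit integral representation for $R_\xi$, and commuting $\partial_z^k$ through $\rho_\xi(t, y+z)$ brings down powers of the exponent $\sqrt{(\lambda+\nu\xi^2)/\nu}$, giving the $k$-dependent prefactors.

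The main difficulty is the fourth step, namely extracting the two-regime pointwise bound in \eqref{EQthisijhhssdinthegdsfg101}. The contour has to be split into a ``low-frequency'' piece where $|\lambda|\lesssim \nu\xi^2 + 1$, which after a standard stationary/steepest descent argument contributes the elliptic-type bound $b^{k+1}e^{-\theta_0 b(y+z)}$ with $b=|\xi|+1/\sqrt{\nu}$, and a ``high-frequency'' piece where $|\lambda|$ is large, which contributes the short-time diffusive bound $(\nu t)^{-(k+1)/2}e^{-\theta_0(y+z)^2/\nu t}e^{-\nu\xi^2 t/8}$. The delicate point is to choose the deformation parameter so that both Gaussian decay in $(y+z)^2/\nu t$ and exponential decay in $b(y+z)$ are captured simultaneously without losing constants; the factor $e^{-\nu\xi^2 t/8}$ (as opposed to $e^{-\nu\xi^2 t}$) is exactly the slack needed to move the contour off the imaginary axis. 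This is essentially the content of Proposition~3.3 of \cite{NguyenNguyen18}, whose argument we would follow.
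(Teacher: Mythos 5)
The paper offers no proof of this lemma: it is imported verbatim from \cite{NguyenNguyen18} (Proposition~3.3 and Section~3.3 there), and your sketch is a faithful outline of the resolvent/Laplace-transform construction carried out in that reference --- even reflection to reduce to the Neumann kernel plus a residual solving an inhomogeneous Robin problem, explicit solution of the resolvent ODE giving the $e^{-(y+z)\sqrt{(\lambda+\nu\xi^2)/\nu}}$ structure (hence $(\partial_y-\partial_z)R_\xi=0$), and contour deformation split into a low-frequency piece yielding $b^{k+1}e^{-\theta_0 b(y+z)}$ and a high-frequency piece yielding the Gaussian term. Since you ultimately defer to the same source for the delicate estimates (in particular the singular boundary factor at $\lambda=0$, which is where $b=|\xi|+\nu^{-1/2}$ originates), your proposal coincides with what the paper does, namely cite \cite{NguyenNguyen18}.
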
 \colb \par \begin{remark} \label{R01} Based on \eqref{EQthisijhhssdinthegdsfg101}, the residual kernel $R_{\xi}$ satisfies \begin{align} | (y\partial_{y})^k R_\xi (t,y,z)| &\lesssim b \, ((y b)^k +1) e^{-\theta_0b(y+z)} + \biggl(\left(\frac{y}{\sqrt{\nu t}}\right)^k +1 \biggr) \frac{1}{\sqrt{\nu t}}e^{- \theta_0 \frac{(y+z)^2}{\nu t}}e^{-\frac{\nu\xi^2t}{8}} \notag\\ &\lesssim be^{-\frac{\theta_0}{2}b(y+z)} + \frac{1}{\sqrt{\nu t}}e^{- \frac{\theta_0}{2} \frac{(y+z)^2}{\nu t}}e^{-\frac{\nu\xi^2t}{8}} \label{EQthisijhhssdinthegdsfg33} \end{align} for $k \in \{0,1,2\}$, pointwise in $y,z \geq 0$. \end{remark} \par \begin{remark} \label{R03} As explained in \cite{NguyenNguyen18}, the Duhamel formula \eqref{EQthisijhhssdinthegdsfg:kernel:est} holds not just for real values of $y, z \in [0,\infty)$ but in general for all complex values $y,z\in \Omega_{\mu}\cup [1+\mu,\infty)$. In this case, for $y\in \Omega_\mu$, we may find $\theta \in [0,\mu)$ such that $y \in \partial\Omega_\theta$. If $\Im y \geq 0$, the integrals from $0$ to $\infty$ in \eqref{EQthisijhhssdinthegdsfg:kernel:est} become integrals over the complex contour $\gamma_\theta^+ = (\partial \Omega_\theta \cap \{ z \colon \Im z \geq 0 \}) \cup [1+\theta ,\infty)$. A similar contour may be defined for $\Im y < 0$. Moreover, the Green's function $G_\xi(t,y,z)$ from Lemma~\ref{Green:fun}, which appears in \eqref{EQthisijhhssdinthegdsfg:kernel:est}, has a natural extension to the complex domain $\Omega_\mu \cup [1+\mu,\infty)$, by complexifying the heat kernels involved. Since for $y\in \Omega_\mu$ we have $|\Im y| \leq \mu \Re y$, for $\mu$ small, we have that $|y|$ is comparable to $\Re y$. Therefore, the upper bounds we have available for the complexified heat kernel $\tilde H_\xi$ and for the residual kernel $R_\xi$ may be written in terms of $\Re y, \Re z \geq 0$. Because of this, as in \cite{NguyenNguyen18}, when we prove inequalities for the analytic norms $X$ and $Y$ we provide details only for the case when $y$ and $z$ are real-valued. The complex versions of \eqref{EQthisijhhssdinthegdsfg:kernel:est} and Lemma~\ref{Green:fun} only lead to notational complications due to integration over complex paths, and due to having to write $\Re y$, $\Re z$ at the exponentials in all upper bounds. We omit these details. \end{remark} \par \subsection{Main estimates} We denote \begin{align} \mu_1 &= \mu+\frac{1}{4}(\mu_0-\mu-\gamma s) \label{EQthisijhhssdinthegdsfg02} \\ \mu_2 &= \mu+\frac{1}{2}(\mu_0-\mu-\gamma s) \label{EQthisijhhssdinthegdsfg02b} \end{align} and observe that \begin{equation} 0<\mu<\mu_1<\mu_2<\mu_0-\gamma s \, . \llabel{Odločeni so roži kratki dnóvi,
ki pride nanjo pomladanska slana,
al v cvetji jo zapadejo snegovi!
tak mladi deklici, ki zgodnja rana 
srce ji gloda, vsmŕti mir njegovi,
le kratka pot je skoz življenje dana;
al je za majhen čas se združit vredno,
de bi ločitve spet se bala vedno?
De bi od smrti rešil te nesrečne,
in tamkej mili Bog v nebeškem raji
z menoj te, dragi! sklenil čase večne,
pustila vnémar sem željé narslaji,
pustila vnémar dni na sveti srečne,
sem odpovedala se zvezi naji; - 
je uslišana bila molitev moja. -
Ne smem postati jaz nevesta tvoja.
Bogú sem večno čistost obljubila,
in Jezusu, in materi Mariji;
kar doživela let bom še števila
v željá bridkosti, v upa rajskem siji,
nobena me ne bo premogla sila,
bila de svojemu, svetá Mesiji,
nebeškemu bi ženinu nezvesta,
nikdár ne morem tvoja bit nevesta!" sldkfjg;ls dslksfgj s;lkdfgj ;sldkfjg ;sldkfgj s;ldfkgj s;ldfkgj gfisasdoifaghskcx,.bvnliahglidhgsd gs sdfgh sdfg sldfg sldfgkj slfgj sl;dfgkj sl;dfgjk sldgfkj jiurwe alskjfa;sd fasdfEQthisijhhssdinthegdsfg01} \end{equation} \par \begin{lemma}[\bf Main $X$ norm estimate] \label{lem:main:X} \cole With $\mu_1$ and $\mu_2$ as in \eqref{EQthisijhhssdinthegdsfg02} and \eqref{EQthisijhhssdinthegdsfg02b}, the nonlinear term in \eqref{EQthisijhhssdinthegdsfg:kernel:est} is bounded in the $X_\mu$ norm as \begin{align} &(\mu_0-\mu-\gamma s) \sum_{i+j= 2} \left\lVert \partial_x^i (y\partial_{y})^j\int_0^\infty G(t-s, y, z)N(s, z) \,dz\right\rVert_{X_\mu} \notag\\ &\quad + \sum_{i+j \leq 1} \left\lVert \partial_x^i (y\partial_{y})^j\int_0^\infty G(t-s, y, z)N(s, z) \,dz\right\rVert_{X_{\mu_1}} \notag\\ &\quad \quad \les \sum_{i+j \leq 1}\lVert \partial_x^i (y\partial_{y})^j N(s)\rVert_{X_{\mu_2}} + \frac{1}{(\mu_0-\mu-\gamma s)^{1/2}} \sum_{i+j \leq 1} \lVert \partial_x^i \partial_{y}^j N(s)\rVert_{S_{\mu_2}} \, . \label{EQthisijhhssdinthegdsfg701} \end{align} The $X_\mu$ norm of the trace kernel term in \eqref{EQthisijhhssdinthegdsfg:kernel:est} is estimated as \begin{align} &(\mu_0-\mu-\gamma s) \sum_{i+j= 2} \norm{ \partial_x^i (y \partial_y)^j G(t-s, y,0)B(s)}_{X_\mu} \notag\\ &\quad + \sum_{i+j \leq 1 } \norm{ \partial_x^i (y \partial_y)^j G(t-s, y,0)B(s)}_{X_{\mu_1}} \notag\\ &\quad \quad \les \frac{1}{\sqrt{t-s}}\left(\sum_{i\leq1} \lVert\partial_{x}^{i}N(s)\rVert_{\YY_{\mu_1}} + \lVert \partial_x^i N(s)\rVert_{S_{\mu_1}} \right) + \sum_{i\leq 1} \norm{\partial_x^i N(s)}_{X_{\mu_1}} \, . \label{EQthisijhhssdinthegdsfg703} \end{align} Lastly, the initial datum term in \eqref{EQthisijhhssdinthegdsfg:kernel:est} may be bounded in the $X_\mu$ norm as \begin{align} & \sum_{i+j\leq 2} \norm{ \partial_x^i (y \partial_y)^j \int_0^\infty G(t, y,z)\omega_{0}(z) \,dz}_{X_\mu} \notag\\&\indeq \les \sum_{i+j\leq 2}\lVert \partial_x^i (y\partial_{y})^j \omega_0\rVert_{X_{\mu}} + \sum_{i+j\leq 2} \sum_{\xi} \lVert \xi^i \partial_{y}^j \omega_{0\xi}\rVert_{L^\infty(y\ge 1+\mu)} \,. \llabel{Ko je minul, kar misli, de bo v sili
zlatá mu treba, si od mož ga vzame; 
dar ribču da, njim, ki so ga nosili.
"Kar Staroslav zlatá še hrani zame,
daj ga sirotam," reče Bogomili,
se bliža ji, presrčno jo objame,
molče poda desnico ji k slovesi,
solzé stojijo v vsakem mu očesi.
"O čakaj, mi dopolni prošnjo eno!
préd ko se lóčva," Bogomila pravi,
"de mi v skrbeh ne bo srce utopljeno,
de lóžej se bridkosti v bran postavi,
préd, ko greš v Oglej čez goró zeleno,
se pričo mene odpovej zmotnjavi, 
dokler te posveti krst, se zamúdi, 
vodá je blizo, in duhovni tudi."sldkfjg;ls dslksfgj s;lkdfgj ;sldkfjg ;sldkfgj s;ldfkgj s;ldfkgj gfisasdoifaghskcx,.bvnliahglidhgsd gs sdfgh sdfg sldfg sldfgkj slfgj sl;dfgkj sl;dfgjk sldgfkj jiurwe alskjfa;sd fasdfEQthisijhhssdinthegdsfg704} \end{align} \colb \end{lemma}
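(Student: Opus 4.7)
The plan is to decompose the Green's function according to Lemma~\ref{Green:fun} as $G_\xi = \tilde H_\xi + R_\xi$, and to estimate the three contributions in \eqref{EQthisijhhssdinthegdsfg:kernel:est} (initial datum, nonlinear volume term, boundary trace term) separately. In each case the analytic weight $e^{\ee(1+\mu-y)_+|\xi|}$ on the left must be transported through the $z$ integration and bounded by $e^{\ee(1+\mu_2-z)_+|\xi|}$ (or $e^{\ee(1+\mu_1-z)_+|\xi|}$) on the right; the surplus $e^{-\ee(\mu_2-\mu)|\xi|}$ is then used to absorb the polynomial losses in $|\xi|$ and $b = |\xi| + 1/\sqrt{\nu}$ that arise from differentiating the kernel, via $\sup_\xi |\xi|^k e^{-\ee(\mu_2-\mu)|\xi|} \les (\mu_2-\mu)^{-k}$. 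Throughout, I will split the $z$ integration into the near region $z \in [0,1+\mu_2]$, where the analytic norms of $N$ apply, and the far region $z \geq 1+\mu_2$, where only the weighted Sobolev norm $S_{\mu_2}$ is available; this produces the two types of terms on the right of \eqref{EQthisijhhssdinthegdsfg701}.

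For the initial datum term, the derivatives $(y\partial_y)^j$ are commuted past the kernel using the identity $\partial_y \tilde H_\xi(y,z) = -\partial_z \tilde H_\xi^{(1)}(y,z) + \partial_z \tilde H_\xi^{(2)}(y,z)$ (with $\tilde H_\xi^{(1,2)}$ the two summands in \eqref{EQthisijhhssdinthegdsfg31}), combined with the decomposition $y\partial_y = (y-z)\partial_y + z\partial_y$; boundary contributions at $z=0$ cancel by the Neumann structure of $\tilde H_\xi$, and volume contributions are moved onto $\omega_0$. For the residual-kernel part no integration by parts is needed: Remark~\ref{R01} provides pointwise bounds on $(y\partial_y)^k R_\xi$ which, together with the weight properties (a)--(e) of $w$, give the stated bound directly; the second sum on the right of the initial-datum estimate is generated by the far-field $z \geq 1+\mu$ controlled in $L^\infty_z$.

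For the nonlinear volume term, the same commutation applies, but at top order ($i+j=2$) only one $(y\partial_y)$ can be moved onto $(z\partial_z) N$ using the $X_{\mu_2}$ norm at order $i+j\leq 1$, while the second derivative is kept on the kernel. This costs one power of $(\mu_0-\mu-\gamma s)^{-1}$ through the analyticity-recovery mechanism, and matches the prefactor $(\mu_0-\mu-\gamma s)$ on the left of \eqref{EQthisijhhssdinthegdsfg701}. For $i+j\leq 1$ on the left, the smaller gap $\mu_1 - \mu = (\mu_0-\mu-\gamma s)/4$ between $\mu$ and $\mu_1$ (and the further gap to $\mu_2$) is enough to absorb any leftover $b|\xi|$ factors without a $\mu_0-\mu-\gamma s$ prefactor. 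The $1/(\mu_0-\mu-\gamma s)^{1/2}$ in front of the $S_{\mu_2}$ term on the right arises from integrating the Gaussian decay of $\tilde H$ in $z$ across the gap from $z > 1+\mu_2$ back to $y < 1+\mu$, weighted by $w$.

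For the boundary trace term, I first use the explicit representation of the form $B_\xi(s) = -\int_0^\infty e^{-|\xi| z} N_\xi(s,z)\,dz$ (up to a harmless factor) obtained by solving $\Delta_\xi \phi_\xi = N_\xi$ with Dirichlet data, and split the $z$ integral at $1+\mu_1$ so that $|B_\xi(s)|$ is controlled by the $Y_{\mu_1}$ and $S_{\mu_1}$ norms of $N$ after accounting for the weight $e^{\ee(1+\mu_1-z)_+|\xi|}$. The pointwise bound on $(y\partial_y)^j G_\xi(t-s, y, 0)$ then decomposes into a heat-kernel piece $\tilde H_\xi(t-s, y, 0)$, which carries the singular prefactor $1/\sqrt{\nu(t-s)}$ responsible for the $1/\sqrt{t-s}$ factor in \eqref{EQthisijhhssdinthegdsfg703} after integration in $y$ against $w$ and $e^{\ee(1+\mu-y)_+|\xi|}$, and a residual piece $R_\xi(t-s,y,0) \les b\, e^{-(\theta_0/2)b y}$ giving the non-singular $X_{\mu_1}$ contribution. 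The main obstacle throughout is the top-order ($i+j=2$) bookkeeping: because at most one $(y\partial_y)$ can be moved onto $N$, the second derivative must be carried by the kernel, producing $y$-dependent prefactors like $(y/\sqrt{\nu t})^2/\sqrt{\nu t}$ or $b(by)^2$ that must be integrated against $w(y)$ without creating a non-integrable time singularity. The specific choice of the three analyticity radii $\mu < \mu_1 < \mu_2$ in \eqref{EQthisijhhssdinthegdsfg02}--\eqref{EQthisijhhssdinthegdsfg02b} is precisely what makes the analyticity-recovery loss match the factor $(\mu_0-\mu-\gamma s)^{-1/2-\alpha}$ built into the $X(t)$ norm \eqref{EQthisijhhssdinthegdsfg17}.
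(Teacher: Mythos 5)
Your overall architecture --- splitting $G_\xi=\tilde H_\xi+R_\xi$, transporting the tangential weight $e^{\ee(1+\mu-y)_+|\xi|}$ through the $z$-integral, separating a near region where the analytic norms of $N$ apply from a far region $z\ge 1+\mu_2$ handled by $S_{\mu_2}$ in $L^2$ with the Gaussian decay across the gap producing the $(\mu_0-\mu-\gamma s)^{-1/2}$, and using the trace representation $B_\xi=-\int_0^\infty e^{-|\xi|z}N_\xi\,dz$ --- is the paper's (Lemmas~\ref{high:y:x}, \ref{rem:x}, \ref{tra:x}, \ref{L02}, assembled at order $i+j=2$ via the recovery Lemma~\ref{ana:rec:x}). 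But two of your mechanisms are wrong or missing, and they are exactly where the work lies. The first is your claim that the factors of $b=|\xi|+\nu^{-1/2}$ produced by differentiating the residual kernel are absorbed by the surplus $e^{-\ee(\mu_2-\mu)|\xi|}$ via $\sup_\xi|\xi|^ke^{-\ee(\mu_2-\mu)|\xi|}\les(\mu_2-\mu)^{-k}$. This cannot work: the $\nu^{-1/2}$ part of $b$ is independent of $\xi$ and blows up as $\nu\to0$, so no amount of tangential analyticity controls it; and even for the $|\xi|$ part, each power absorbed this way costs $(\mu_0-\mu-\gamma s)^{-1}$, a loss that is simply not present on the right of \eqref{EQthisijhhssdinthegdsfg701} for $i+j\le1$. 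The correct mechanism (Remark~\ref{R01} together with \eqref{EQthisijhhssdinthegdsfg185}) is that the $b$-factors are self-absorbed by the kernel's own decay, $(yb)^ke^{-\theta_0 b(y+z)}\les e^{-\frac{\theta_0}{2} b(y+z)}$ and $\lVert be^{-\frac{\theta_0}{2} bz}\rVert_{L^1_z}\les1$, combined with the weight properties $w(y)e^{-y/(C\sqrtnu)}\les\sqrtnu\les w(z)$ of Remark~\ref{R04}; no analyticity radius is spent on them.

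The second gap is the weight transfer. The $X_\mu$ norm carries the boundary-layer weight $w(y)$, which degenerates to $\sqrtnu$ at the boundary, and your proposal never explains how $w(y)$ on the left becomes $w(z)$ on the right. In the region $z\le 3y/4$ --- where the conormal derivative must remain on the kernel, since there $y\partial_z N$ cannot be converted into $z\partial_zN$ with a bounded coefficient --- the ratio $w(y)/w(z)$ can be as large as $\nu^{-1/2}$, and it is beaten only through $|y|\les|y-z|$ and the Gaussian, i.e.\ $w(y)\le w(z)e^{(y-z)^2/(64\nu(t-s))}$, which is \eqref{EQthisijhhssdinthegdsfg51}; in the complementary region $z\ge y/2$ one instead uses $w(y)\les w(z)$ from Remark~\ref{R04}(b). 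Your splitting $y\partial_y=(y-z)\partial_y+z\partial_y$ with cancellation of the $z=0$ boundary term by the Neumann structure is a legitimate alternative to the paper's cutoff $\psi(z/y)$ for the derivative bookkeeping, but it does not by itself settle this weight issue, which must still be argued region by region exactly as above. With these two repairs the proposal reduces to the paper's proof.
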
 \par The proof of Lemma~\ref{lem:main:X} is given at the end of Section~\ref{secX}. \par \begin{lemma}[\bf Main $\YY$ norm estimate] \label{lem:main:Y} \cole Let $\mu_1$ be as defined in \eqref{EQthisijhhssdinthegdsfg02}. Then the nonlinear term in \eqref{EQthisijhhssdinthegdsfg:kernel:est} is bounded in the $\YY_\mu$ norm as \begin{align} & (\mu_0-\mu-\gamma s) \sum_{i+j= 2} \left\lVert \partial_x^i (y\partial_{y})^j\int_0^\infty G(t-s, y, z)N(s, z) \,dz\right\rVert_{\YY_\mu} \notag\\ &\quad + \sum_{i+j\leq 1} \left\lVert \partial_x^i (y\partial_{y})^j\int_0^\infty G(t-s, y, z)N(s, z) \,dz\right\rVert_{\YY_{\mu_1}} \notag\\ &\quad \quad \les \sum_{i+j\leq 1}\lVert \partial_x^i (y\partial_{y})^j N(s)\rVert_{\YY_{\mu_1}} +\sum_{i+j\leq 1} \lVert \partial_x^i \partial_{y}^j N(s)\rVert_{S_{\mu_1}} \, . \label{EQthisijhhssdinthegdsfg706} \end{align} The $\YY_\mu$ norm of the trace kernel term in \eqref{EQthisijhhssdinthegdsfg:kernel:est} is estimated as \begin{align} & (\mu_0-\mu - \gamma s) \sum_{i+j = 2} \norm{ \partial_x^i (y \partial_y)^j G(t-s, y,0)B(s)}_{\YY_\mu} + \sum_{i+j\leq 1} \norm{ \partial_x^i (y \partial_y)^j G(t-s, y,0)B(s)}_{\YY_{\mu_1}} \notag \\ & \quad \les \sum_{i\leq1}\left( \lVert\partial_{x}^{i}N(s)\rVert_{\YY_{\mu_1}} + \lVert \partial_{x}^{i} N(s)\rVert_{S_\mu} \right) \, . \label{EQthisijhhssdinthegdsfg707} \end{align} Lastly, the initial datum term in \eqref{EQthisijhhssdinthegdsfg:kernel:est} may be bounded as \begin{align} & \sum_{i+j\leq 2} \norm{ \partial_x^i (y \partial_y)^j \int_0^\infty G(t, y,z)\omega_{0}(z) \,dz}_{\YY_\mu} \notag \\&\indeq \les \sum_{i+j\leq 2}\lVert \partial_x^i (y\partial_{y})^j \omega_0\rVert_{\YY_{\mu}} + \sum_{i+j\leq 2} \sum_{\xi} \lVert \xi^i \partial_{y}^j \omega_{0\xi}\rVert_{L^1(y\ge 1+\mu)} \, . \label{EQthisijhhssdinthegdsfg708} \end{align} \colb \end{lemma}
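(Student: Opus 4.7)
\textbf{Proof plan for Lemma~\ref{lem:main:Y}.} The structure mirrors the $X$-norm proof in Lemma~\ref{lem:main:X}, but every estimate must now be performed in the $L^1$-based $\SL_\mu$ topology (taken over the complex contour $\partial\Omega_\theta$, $\theta<\mu$) rather than in $\ZL_{\mu,\nu}$. Throughout, I would take the $x$-Fourier transform, write $G_\xi=\tilde H_\xi+R_\xi$ via Lemma~\ref{Green:fun}, and, following Remark~\ref{R03}, carry out all calculations pretending $y,z\ge 0$ are real, using the pointwise bounds on $\tilde H_\xi$ and on $(y\partial_y)^kR_\xi$ recorded in \eqref{EQthisijhhssdinthegdsfg31} and \eqref{EQthisijhhssdinthegdsfg33}. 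The tangential derivatives $\partial_x^i$ commute with the convolutions and simply produce factors $\xi^i$ that are absorbed into the source; the work therefore lies entirely in handling the $(y\partial_y)^j$ derivatives together with the exponential weight $e^{\ee(1+\mu-y)_+|\xi|}$ and the $\ell^1_\xi$ summation.

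\textbf{Initial datum bound \eqref{EQthisijhhssdinthegdsfg708}.} I would first swap the derivatives $\partial_x^i(y\partial_y)^j$ onto the kernel. For $\tilde H_\xi$, I integrate by parts in $z$ to replace each $\partial_y$ by $-\partial_z$ (modulo a harmless boundary contribution at $z=0$ controlled by the Neumann structure of $\tilde H$), after which each $(y\partial_y)^j$ produces at most $(y\partial_y)^j\omega_{0,\xi}$ under the integral plus, when $y\le z$, terms bounded by the $L^\infty(y\ge 1+\mu)$ tail in \eqref{EQthisijhhssdinthegdsfg708}. For $R_\xi$, Remark~\ref{R01} gives $|(y\partial_y)^jR_\xi|\lesssim be^{-\theta_0 b(y+z)/2}+(\nu t)^{-1/2}e^{-\theta_0(y+z)^2/(2\nu t)}e^{-\nu\xi^2t/8}$, whose $L^1_y$ (Schur) norm is bounded uniformly in $\xi,\nu,t$; using the triangle inequality $e^{\ee(1+\mu-y)_+|\xi|}\le e^{\ee(1+\mu-z)_+|\xi|}$ for $y\ge z$ (and a small loss in $\mu$ otherwise), this piece is absorbed into $\|(y\partial_y)^j\omega_0\|_{Y_\mu}$ plus the $L^1$ tail term in \eqref{EQthisijhhssdinthegdsfg708}.

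\textbf{Nonlinear term \eqref{EQthisijhhssdinthegdsfg706}.} The same integration-by-parts trick converts $(y\partial_y)^j$ acting on $\tilde H_\xi$ into derivatives on $N$, giving $\|(y\partial_y)^j N(s)\|_{Y_{\mu_1}}$ plus commutator terms of order one handled in the $S_{\mu_1}$ norm when the derivative lands on the weight $y$. For $R_\xi$, I bound $(y\partial_y)^j R_\xi$ as above and take $L^1_y$ of the kernel and $L^1_z$ of $N$; the exponential weight loss from $e^{\ee(1+\mu-y)|\xi|}/e^{\ee(1+\mu_1-z)|\xi|}$ is controlled by the standard trick $\xi^{2} e^{-\ee(\mu_1-\mu)|\xi|}\lesssim (\mu_1-\mu)^{-2}$, quantified in the analyticity-recovery estimate of Lemma~\ref{ana:rec:l}; this is exactly where, for $i+j=2$, I pay the factor $(\mu_0-\mu-\gamma s)$ in front.

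\textbf{Trace term \eqref{EQthisijhhssdinthegdsfg707}.} At $z=0$, the kernel $G_\xi(t-s,y,0)$ is a scalar-valued function of $y$ whose $L^1_y$ norm is bounded independently of $\nu,\xi,t-s$ by the same decomposition $\tilde H+R$. Hence $\|(y\partial_y)^j G_\xi(t-s,y,0)B_\xi(s)\|_{\SL_\mu}\lesssim \|(y\partial_y)^j G_\xi(t-s,\cdot,0)\|_{L^1}\,|B_\xi(s)|$. Since $B_\xi(s)=-\partial_y\Delta_\xi^{-1}N_\xi(s)|_{y=0}$, the explicit Dirichlet Green's function for $\Delta_\xi$ gives $|B_\xi(s)|\lesssim \int_0^\infty e^{-|\xi|z}|N_\xi(s,z)|\,dz$, which after multiplying by $e^{\ee(1+\mu)|\xi|}$ and summing in $\xi$ is dominated by $\|N(s)\|_{Y_{\mu_1}}+\|N(s)\|_{S_\mu}$ (splitting the $z$-integral near the boundary and in the Sobolev region). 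Bookkeeping the $\xi^i$ factors yields the two terms in \eqref{EQthisijhhssdinthegdsfg707}.

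\textbf{Main obstacle.} The chief technical hurdle is the highest-order case $i+j=2$: naively, $(y\partial_y)^2$ falling on $\tilde H_\xi$ produces a loss, and one cannot simply integrate by parts twice without encountering boundary terms at $z=0$ and commutators with the factor $y$ that are not in $Y_{\mu_1}$. The resolution, as in the $X$-norm case, is to peel off one derivative by integration by parts and handle the remaining one via the analyticity-recovery estimate Lemma~\ref{ana:rec:l}, which converts the derivative into a $(\mu_0-\mu-\gamma s)^{-1}$ cost; this is exactly balanced by the prefactor $(\mu_0-\mu-\gamma s)$ on the left-hand side. A secondary subtlety is keeping the contours straight when $y\in\partial\Omega_\theta$ meets the region where the weight $e^{\ee(1+\mu-y)_+|\xi|}$ ceases to grow (the corner of $\Omega_\mu$), which forces the intermediate radius $\mu_1$ rather than $\mu$ on the right-hand side of the low-order terms.
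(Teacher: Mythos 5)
Your overall architecture is the paper's: Fourier transform in $x$, the splitting $G_\xi=\tilde H_\xi+R_\xi$ from Lemma~\ref{Green:fun}, Schur-type $L^1_y$ bounds on the kernels combined with Fubini (the correct replacement of the $L^\infty_yL^1_z$ bounds used in the $X$-norm case), the identity $B_\xi(s)=-\int_0^\infty e^{-|\xi|z}N_\xi(s,z)\,dz$ together with $\lVert G_\xi(t-s,\cdot,0)\rVert_{L^1_y}\les1$ for the trace term, and a single application of the analyticity recovery Lemma~\ref{ana:rec:l} to reduce the $i+j=2$ terms at radius $\mu$ to $i+j=1$ terms at radius $\mu_1$, which is exactly what the prefactor $(\mu_0-\mu-\gamma s)$ pays for. (Your intermediate claim that the weight loss is controlled via $\xi^{2}e^{-\ee(\mu_1-\mu)|\xi|}\les(\mu_1-\mu)^{-2}$ is a miscount --- two powers of $(\mu_1-\mu)^{-1}$ would require the square of the prefactor --- but your later accounting, one derivative transferred by parts and one recovered from the radius gain, is the correct one and is what the paper does.)

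The genuine gap is in the integration-by-parts step. You propose to ``integrate by parts in $z$ to replace each $\partial_y$ by $-\partial_z$, modulo a harmless boundary contribution at $z=0$.'' This fails in the region $z\ll y$. After integrating by parts you are left with $y\int H_\xi(t-s,y,z)\partial_zN_\xi(s,z)\,dz$, and the norms in the statement control only the conormal derivative $z\partial_zN_\xi$, not $\partial_zN_\xi$; converting one to the other costs a factor $y/z$ which is unbounded as $z\to0$ with $y$ fixed, and the Gaussian does not save you (for $y\sim\sqrt{\nu(t-s)}$ the kernel is of size $(\nu(t-s))^{-1/2}$ uniformly on $[0,y]$ while $\int_0^y z^{-1}\,dz$ diverges). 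Moreover the boundary term $y\,\tilde H_\xi(t-s,y,0)N_\xi(s,0)$ does not vanish --- the Neumann structure gives $\partial_z\tilde H_\xi|_{z=0}=0$, not $\tilde H_\xi|_{z=0}=0$ --- and $N_\xi(s,0)$ is not controlled by the right-hand side of \eqref{EQthisijhhssdinthegdsfg706}. The paper's fix, the same one used for the $X$~norm in Lemma~\ref{high:y:x}, is the cutoff $\psi(z/y)$: on $\{0\le z\le3y/4\}$ one keeps the derivative on the kernel and absorbs the factor $y$ into the Gaussian via $|y|\le4|y-z|$, so that $|y\partial_zH_\xi|\les(\nu(t-s))^{-1/2}e^{-(y-z)^2/8\nu(t-s)}$, whose $L^1_y$ norm is $O(1)$; only on $\{z\ge y/2\}$ does one integrate by parts, and there $y\partial_zN_\xi=(y/z)\,z\partial_zN_\xi$ with $y/z\le2$ produces exactly the conormal derivative appearing on the right-hand side. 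The same three-region decomposition is what makes your initial-datum and residual-kernel steps (and the corresponding $L^1$ tail over $\{z\ge1+\mu\}$ giving the $S_\mu$ terms) go through; as literally written, your version of the step would not close.
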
 The proof of Lemma~\ref{lem:main:Y} is provided at the end of Section~\ref{secL}. The next lemma provides inequalities for the nonlinearity. \colb \par \begin{lemma}[\bf The $X$, $\YY$, and $S_\mu$ norm estimates for the nonlinearity] \label{lem:main:N} \cole For any $\mu \in (0,\mu_0-\gamma s)$ we have the inequalities \begin{align} \sum_{i+j\leq1}\lVert \partial_{x}^{i}(y\partial_{y})^jN(s)\rVert_{X_{\mu}} &\lesssim \sum_{i\leq 1}\left(\lVert \partial_{x}^i\omega\rVert_{\YY_{\mu}} +\lVert \partial_{x}^i\omega\rVert_{S_{\mu}}\right) \sum_{i+j\leq2}\lVert \partial_{x}^i(y\partial_{y})^j\omega\rVert_{X_{\mu}} \notag \\&\indeq + \sum_{i\leq 2}\left(\lVert \partial_{x}^i\omega\rVert_{\YY_{\mu}} +\lVert \partial_{x}^i\omega\rVert_{S_{\mu}}\right) \sum_{i+j\leq1}\lVert \partial_{x}^i(y\partial_{y})^j\omega\rVert_{X_{\mu}} \notag \\&\indeq + \lVert \omega\rVert_{X_{\mu}} \sum_{i+j=1}\lVert \partial_{x}^i(y\partial_{y})^j\omega\rVert_{X_{\mu}} \label{EQthisijhhssdinthegdsfg709} \end{align} and \begin{align} \sum_{i+j\leq1} \lVert \partial_x^i(y\partial_{y})^jN(s)\rVert_{\YY_{\mu}} &\lesssim \sum_{i\leq 1}\left(\lVert \partial_x^i\omega\rVert_{\YY_{\mu}} + \lVert \partial_x^i\omega\rVert_{S_{\mu}}\right) \sum_{i+j\leq2}\lVert \partial_x^i(y\partial_{y})^j\omega\rVert_{\YY_{\mu}} \notag \\&\indeq + \sum_{i\leq 2}\left(\lVert \partial_x^i\omega\rVert_{\YY_{\mu}} + \lVert \partial_x^i\omega\rVert_{S_{\mu}}\right) \sum_{i+j\leq1}\lVert \partial_x^i(y\partial_{y})^j\omega\rVert_{\YY_{\mu}} \notag \\&\indeq +\lVert \omega\rVert_{X_{\mu}} \sum_{i+j=1}\lVert \partial_x^i(y\partial_{y})^j\omega\rVert_{\YY_{\mu}} \, . \llabel{sldkfjg;ls dslksfgj s;lkdfgj ;sldkfjg ;sldkfgj s;ldfkgj s;ldfkgj gfisasdoifaghskcx,.bvnliahglidhgsd gs sdfgh sdfg sldfg sldfgkj slfgj sl;dfgkj sl;dfgjk sldgfkj jiurwe alskjfa;sd fasdfEQthisijhhssdinthegdsfg710} \end{align} For the Sobolev norm we have the estimate \begin{align} \sum_{i+j \leq 1} \norm{\partial_x^i \partial_y^j N(s)}_{S_\mu} &\les \NORM{\omega}_s \sum_{i+j\leq 3} \norm{\partial_x^i \partial_y^j \omega}_{S} \, . \label{EQthisijhhssdinthegdsfg711} \end{align} \colb \end{lemma}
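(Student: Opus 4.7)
The plan is to decompose $N = -u\cdot\nabla\omega = -u_1\partial_x\omega - u_2\partial_y\omega$ and to exploit the divergence-free condition together with the no-slip boundary condition $u_2|_{y=0}=0$. Since $u_2(y=0)=0$, we rewrite the troublesome normal-derivative term as $u_2\partial_y\omega = (u_2/y)(y\partial_y\omega)$, so that the normal derivative of $\omega$ only appears in the form $y\partial_y\omega$, which is the object naturally controlled by the $X_\mu$ and $Y_\mu$ norms. The velocity $u$ is recovered from $\omega$ via the Dirichlet Biot-Savart law $u_\xi = \nabla^\perp \Delta_\xi^{-1}\omega_\xi$, whose Green's function on the half-line has the explicit form $(2|\xi|)^{-1}(e^{-|\xi||y-z|}-e^{-|\xi|(y+z)})$.

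First I would establish Biot-Savart estimates expressing $u_1$ and $u_2/y$ in the $X_\mu$ norm in terms of $\omega$. The key point is that the kernels for $u_1$ and $u_2/y$ are of the form $e^{-|\xi||y\pm z|}$, which are integrable in $z$. This converts the $L^1_y$ information on $\omega$ (captured by $Y_\mu$ near the boundary and by $S_\mu$ in the Sobolev region) into $L^\infty_y$ bounds on $u$ in $X_\mu$. Moreover, the decay $e^{-|\xi||y-z|}$ of the kernel dominates the shift of the analytic weight $e^{\epsilon(1+\mu-y)_+|\xi|}$ between $y$ and $z$, so no radius of analyticity is lost. The splitting $\int_0^\infty = \int_0^{1+\mu} + \int_{1+\mu}^\infty$ explains precisely why the combination $\|\partial_x^i\omega\|_{Y_\mu} + \|\partial_x^i\omega\|_{S_\mu}$ appears on the right-hand sides of \eqref{EQthisijhhssdinthegdsfg709} and its $Y$-analogue. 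One also checks using properties (a)--(e) of Remark~\ref{R04} that the boundary layer weight $w(y)$ interacts correctly with the Biot-Savart kernel.

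Next I would apply $\partial_x^i(y\partial_y)^j$ with $i+j\le1$ to each of $u_1\partial_x\omega$ and $(u_2/y)(y\partial_y\omega)$ and use that both $\partial_x$ and $y\partial_y$ are derivations, so standard Leibniz-type expansions apply. Each resulting product has one factor coming from $u$ (which by the Biot-Savart bound above is controlled by $\omega$ in $Y_\mu + S_\mu$ with some number of $\partial_x$ derivatives) and one factor which is an $X_\mu$-controlled derivative of $\omega$. The algebra property of $X_\mu$ (essentially $\ell^1$ in $\xi$ combined with $L^\infty$ in $y$) then closes the estimate. The same strategy with $Y_\mu$ in place of $X_\mu$ gives \eqref{EQthisijhhssdinthegdsfg709} with $Y$; the asymmetry in the three terms on the right-hand sides comes from whether one places zero, one, or two derivatives on the factor measured in the $X$ or $Y$ norm.

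For the Sobolev estimate \eqref{EQthisijhhssdinthegdsfg711} the analysis is carried out in the region $y\ge 1+\mu$, where the $S_\mu$ norm lives. Here plain derivatives $\partial_y$ are morally equivalent to $y\partial_y$, and the $H^3$-based norm $Z$ embeds into $L^\infty$ by two-dimensional Sobolev embedding, so an ordinary Sobolev algebra argument handles the products $u_1\partial_x\omega$ and $u_2\partial_y\omega$. The only subtlety is that Biot-Savart is non-local, so $u$ on $\{y\ge 1+\mu\}$ also sees the analytic region; this contribution is dominated by the full triple norm $\NORM{\omega}_s$, which is why that norm (rather than only the $Z$ norm) appears on the right-hand side.

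The main obstacle is the non-locality of the Biot-Savart law in the analytic case, which forces the analytic and Sobolev regions to communicate. One has to transfer the weight $e^{\epsilon(1+\mu-y)_+|\xi|}$ from the $y$-variable to the $z$-variable across the kernel $e^{-|\xi||y\pm z|}$ while losing no room in $\mu$, and simultaneously handle the boundary-layer weight $w$ from the $X_\mu$ norm together with the contribution from $z \ge 1+\mu$ which only has Sobolev information. Once the corresponding Biot-Savart bounds are established, the rest of the proof is a systematic application of the product rule, of the algebra property of the analytic norms, and of Sobolev embedding in the far field.
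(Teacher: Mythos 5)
Your proposal follows essentially the same route as the paper's proof: the decomposition $u\cdot\nabla\omega = u_1\partial_x\omega + (u_2/y)\,(y\partial_y\omega)$, the explicit half-space Biot--Savart kernel with the weight transfer $e^{\ee(1+\mu-y)_+|\xi|}e^{-|\xi||y-z|}\le e^{\ee(1+\mu-z)_+|\xi|}$, the splitting $\int_0^{1+\mu}+\int_{1+\mu}^\infty$ that produces the $\YY_\mu+S_\mu$ combination, and then Leibniz plus Young's inequality in $\xi$ (with incompressibility and a one-dimensional trace bound handling the Sobolev region). The one detail your sketch glosses over, which a careful execution would uncover, is that the $\lVert\omega\rVert_{X_\mu}$ factor in the third term of \eqref{EQthisijhhssdinthegdsfg709} comes specifically from the extra $-y\,\omega_\xi(y)$ term generated when $y\partial_y$ falls on the Biot--Savart kernel for $u_1$ (and from the identity $y\partial_y(u_2/y)=-\ii\xi u_1-u_2/y$), controlled via $y\les w(y)$.
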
 The proof of Lemma~\ref{lem:main:N} is given at the end of Section~\ref{sec-N}. Lastly, the following statement provides the estimate of the Sobolev part of the norm. \cole \begin{lemma} \label{L12} For any $0 < t < \frac{\mu_0}{2\gamma}$ the estimate \begin{align} \sum_{i+j\le 3} \lVert y \partial_x^i\partial_{y}^j\omega(t)\rVert^2_{L^2(y\geq 1/2)} \lesssim \Bigl(1+ t \sup_{s\in[0,t]} \NORM{\omega(s)}_s^3 \Bigr) e^{C t (1+ \sup_{s\in[0,t]} \NORM{\omega(s)}_s)} \sum_{i+j\le 3} \lVert y \partial_x^i\partial_{y}^j\omega_0\rVert^2_{L^2(y\geq 1/4)} \label{EQthisijhhssdinthegdsfg286} \end{align} holds, where $C>0$ is a constant independent of $\gamma$. \end{lemma}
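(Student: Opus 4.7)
The plan is to perform a weighted energy estimate for Sobolev derivatives of $\omega$, localized away from the boundary. Introduce a smooth cutoff $\chi(y)$ with $\chi \equiv 0$ on $[0,1/4]$ and $\chi \equiv 1$ on $[1/2,\infty)$, and set $\phi(y) = y\chi(y)$. Define
\begin{align}
\tilde E(t) = \sum_{i+j\le 3} \int_{\HH}\phi^2 |\partial_x^i \partial_y^j \omega(t)|^2\,dx\,dy. \notag
\end{align}
Since $\phi(y) \geq y$ for $y\geq 1/2$ and $\phi(y)\leq y$ with $\supp \phi\subset\{y\geq 1/4\}$, we have $\sum_{i+j\le 3}\|y\partial^{i+j}\omega(t)\|^2_{L^2(y\ge 1/2)}\le \tilde E(t)$ and $\tilde E(0)\lesssim \sum_{i+j\le 3}\|y\partial^{i+j}\omega_0\|^2_{L^2(y\ge 1/4)}$, so it suffices to bound $\tilde E(t)$.

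The next step is to apply $\partial_x^i\partial_y^j$ for $i+j\le 3$ to the vorticity equation \eqref{EQthisijhhssdinthegdsfg:vot}, multiply by $\phi^2\partial_x^i\partial_y^j\omega$, integrate over $\HH$, and sum. The time derivative yields $\tfrac12\tfrac{d}{dt}\tilde E$; since $\phi(0)=0$, all boundary terms at $y=0$ disappear. The viscous contribution, after two integrations by parts, splits into the nonnegative dissipation $\nu\int\phi^2|\nabla\partial^{i+j}\omega|^2$ (which we discard) and a bounded commutator $\tfrac{\nu}{2}\int\Delta(\phi^2)|\partial^{i+j}\omega|^2\lesssim \tilde E$ since $\nu\le 1$ and $\Delta(\phi^2)$ is compactly supported. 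For the transport term we decompose
\begin{align}
\partial_x^i\partial_y^j(u\cdot\nabla\omega)=u\cdot\nabla\partial_x^i\partial_y^j\omega+[\partial_x^i\partial_y^j,\,u\cdot\nabla]\omega, \notag
\end{align}
and convert the first piece, via $\div u=0$, into $-\tfrac12\int u\cdot\nabla(\phi^2)|\partial^{i+j}\omega|^2$, bounded by $\|u\|_{L^\infty}\tilde E\lesssim \NORM{\omega}_t\,\tilde E$, the pointwise bound on $u$ coming from the Biot--Savart law applied to the analytic-plus-Sobolev control encoded in $\NORM{\omega}_t$.

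The main work is to control the commutator terms $\int\phi^2(\partial^\alpha u)(\nabla\partial^\beta\omega)\partial^{i+j}\omega$ with $|\alpha|+|\beta|=i+j\le 3$ and $|\alpha|\ge 1$. When $|\alpha|=1$, the bound $\|\nabla u\|_{L^\infty}\lesssim \NORM{\omega}_t$ (from analyticity near the boundary, 2D Sobolev embedding $H^3\hookrightarrow W^{1,\infty}$ away from the boundary, and $L^2$ control of far-field tails) produces at most $\NORM{\omega}_t\,\tilde E$. For $|\alpha|\in\{2,3\}$ we distribute derivatives in an $L^2\times L^\infty$ pairing. In the region $\supp\phi\cap\{y\le 1+\mu_0\}$, which lies in the analytic strip, any pointwise bound on $\partial^k u$ for $k\le 3$ is supplied by the $X$ and $Y$ norms through the Biot--Savart kernel and costs a factor of $\NORM{\omega}_t$; in the pure Sobolev region $\{y\ge 1+\mu_0\}$ these bounds are supplied by the $Z$ norm together with elliptic regularity for $u=\nabla^\perp\Delta^{-1}\omega$. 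Careful bookkeeping shows each commutator contribution is majorized by $\NORM{\omega}_t\,\tilde E(t)+\NORM{\omega}_t^3\,\tilde E(0)$, the cubic term arising from configurations in which high derivatives of $u$ reach into the analytic strip where $\phi$ vanishes.

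Assembling the bounds gives the differential inequality
\begin{align}
\frac{d}{dt}\tilde E(t)\,\le\, C\bigl(1+\NORM{\omega}_t\bigr)\tilde E(t) + C\,\NORM{\omega}_t^3\,\tilde E(0), \notag
\end{align}
and Gronwall's inequality produces the factor $(1+tN^3)e^{Ct(1+N)}$ with $N=\sup_{s\in[0,t]}\NORM{\omega(s)}_s$, establishing \eqref{EQthisijhhssdinthegdsfg286}. The principal obstacle is the top-order commutator case $|\alpha|=3$, $|\beta|=0$: one cannot afford a derivative loss on $\omega$, and the required $L^\infty$-type information on $\partial^3 u$ over $\supp\phi$ cannot be extracted from $\tilde E$ alone. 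The analyticity built into $\NORM{\omega}_t$ in the strip next to the boundary, transferred through the non-local Biot--Savart operator, is essential at precisely this step and is what forces the cubic in $\NORM{\omega}_t$ on the right side.
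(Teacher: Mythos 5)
Your overall strategy coincides with the paper's: the same weight $\phi(y)=y\chi(y)$, the same weighted energy identity with Leibniz commutators, the same use of the analytic part of $\NORM{\omega}_t$ to control the matching region $1/4\le y\le 1/2$ where $\phi$ degenerates, and Gr\"onwall at the end. The gap is in the top-order commutator, precisely the case you yourself single out as the principal obstacle. You claim a pointwise bound on $\partial^3 u$ over $\supp\phi$, supplied in the far field $\{y\ge 1+\mu_0\}$ by ``the $Z$ norm together with elliptic regularity.'' That bound is not available: the $Z$ norm controls only three derivatives of $\omega$ in weighted $L^2$, so elliptic regularity yields $D^3u\in H^1$ locally, and $H^1$ in two dimensions does not embed in $L^\infty$. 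Analyticity cannot rescue this either, since it is confined to the strip near the boundary while $\supp\phi$ extends to $y=\infty$. Accordingly, the paper's velocity estimates (Lemma~\ref{x:u}) assert $\lVert D^ku\rVert_{L^\infty(y\ge 1/4)}\les\NORM{\omega}_t$ only for $k\le 2$, and for $k=3$ only $\lVert D^3u\rVert_{L^2(y\ge 1/4)}\les\NORM{\omega}_t$. The correct pairing for the term with all three derivatives on $u$ is therefore $\lVert D^3u\rVert_{L^2(y\ge1/4)}\,\lVert\phi\nabla\omega\rVert_{L^\infty}\,\lVert\phi\partial^\alpha\omega\rVert_{L^2}$, where $\lVert\phi\nabla\omega\rVert_{L^\infty}$ is in turn bounded, via $H^2(\HH)\subset L^\infty(\HH)$ applied to $\phi\omega$, by $\sum_{i+j\le3}\lVert\partial_x^i\partial_y^j(\phi\omega)\rVert_{L^2(\HH)}+\lVert\omega\rVert_{X_\mu}$, i.e.\ by the square root of the very quantity under Gr\"onwall plus the analytic norm. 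Without this bootstrapping step your case $|\alpha|=3$, $|\beta|=0$ does not close.

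A second, smaller issue: the inhomogeneous term $\NORM{\omega}_t^3\,\tilde E(0)$ in your differential inequality is asserted but has no mechanism to arise; nothing in the commutator estimate at time $t$ sees the initial data. In the paper the inhomogeneity is $(1+\NORM{\omega(t)}_t)\lVert\omega(t)\rVert_{X_\mu}^2$, coming from the matching region $1/4\le y\le1/2$, and it is this quantity (controlled by $\sup_{s}\NORM{\omega(s)}_s^3$) that produces the prefactor $1+t\sup_s\NORM{\omega(s)}_s^3$ after Gr\"onwall. Relatedly, $\Delta(\phi^2)$ is not compactly supported ($\phi^2=y^2$ for $y\ge1/2$), though this part of your argument is easily repaired since $\Delta(\phi^2)\les 1\les\phi^2$ on $\{y\ge1/2\}$, and the remaining piece on $\{1/4\le y\le1/2\}$ is again absorbed by the analytic norm.
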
 \colb \par This statement follows from Lemma~\ref{L12a} below. \par \subsection{Closing the a~priori estimates}\label{sec05} In this section, we provide the a~priori estimates needed to prove Theorem~\ref{T01}. \par \begin{proof}[Proof of Theorem~\ref{T01}] Define \begin{align} \tilde M = \sum_{i+j\leq 2}\lVert \partial_x^i (y\partial_{y})^j \omega_0\rVert_{X_{\mu_0}} + \sum_{i+j\leq 2} \sum_{\xi} \lVert \partial_x^i \partial_{y}^j \omega_{0,\xi}\rVert_{L^\infty(y\ge 1+\mu_0)} \notag \end{align} and \begin{align} \overline M &= \sum_{i+j\leq 2}\lVert \partial_x^i (y\partial_{y})^j \omega_0\rVert_{\YY_{\mu_0}} + \sum_{i+j\leq 2} \sum_{\xi} \lVert \partial_x^i \partial_{y}^j \omega_{0,\xi}\rVert_{L^1(y\ge 1+\mu_0)} \, . \notag \end{align} Note that by \eqref{EQthisijhhssdinthegdsfg:omega:0}, \eqref{EQthisijhhssdinthegdsfg:Shaq} below, and Lemma~\ref{L06}, we have \begin{align} \tilde M + \overline M \les M\, . \notag \end{align} \par Let $t < \frac{\mu_0}{2\gamma}$, $s\in (0,t)$, and $\mu < \mu_0-\gamma t$. First we estimate the $X(t)$ norm of $\omega(t)$. From the mild formulation \eqref{EQthisijhhssdinthegdsfg:kernel:est}, the estimates \eqref{EQthisijhhssdinthegdsfg701}--\eqref{EQthisijhhssdinthegdsfg703}, and the bounds \eqref{EQthisijhhssdinthegdsfg709}--\eqref{EQthisijhhssdinthegdsfg711} for the nonlinear term, we obtain \begin{align} \sum_{i+j=2} \norm{\partial_x^i (y \partial_y)^j \omega(t)}_{X_\mu} &\les \int_0^t \left(\frac{\NORM{\omega(s)}_s^2}{(\mu_0-\mu-\gamma s)^{3/2+\alpha}} + \frac{1}{\sqrt{t-s}} \frac{\NORM{\omega(s)}_s^2}{(\mu_0-\mu-\gamma s)^{1+\alpha}}\right) \,ds + \tilde M \notag\\ & \les \sup_{0\leq s \leq t} \NORM{\omega(s)}_s^2 \left( \frac{1}{\gamma (\mu_0-\mu-\gamma t)^{1/2+\alpha}} + \frac{1}{\sqrt{\gamma} (\mu_0-\mu-\gamma t)^{1/2+\alpha}} \right) + \tilde M \notag\\ &\les \frac{\sup_{0\leq s \leq t} \NORM{\omega(s)}_s^2}{\sqrt{\gamma} (\mu_0-\mu-\gamma t)^{1/2+\alpha}} + \tilde M \, , \label{EQthisijhhssdinthegdsfg305} \end{align} where we used Lemma~\ref{int:t}. Similarly, we obtain \begin{align} \sum_{i+j\leq 1} \norm{\partial_x^i (y \partial_y)^j \omega(t)}_{X_\mu} &\les \int_0^t \biggl(\frac{\NORM{\omega(s)}_s^2}{(\mu_0-\mu-\gamma s)^{1/2+\alpha}} + \frac{1}{\sqrt{t-s}} \frac{\NORM{\omega(s)}_s^2}{(\mu_0-\mu-\gamma s)^{\alpha}} \biggr) \,ds + \tilde M \notag\\ &\les \frac{\sup_{0\leq s \leq t} \NORM{\omega(s)}_s^2}{\sqrt{\gamma}} + \tilde M \, , \label{EQthisijhhssdinthegdsfg305a} \end{align} where we again used Lemma~\ref{int:t}. Combining \eqref{EQthisijhhssdinthegdsfg305} and \eqref{EQthisijhhssdinthegdsfg305a}, we obtain \begin{align} \norm{\omega(t)}_{X(t)} \les \frac{\sup_{0\leq s \leq t} \NORM{\omega(s)}_s^2}{\sqrt{\gamma}} + \tilde M \, . \label{EQthisijhhssdinthegdsfg305b} \end{align} \par Next we estimate the $\YY(t)$ norm of $\omega(t)$. From the mild formulation \eqref{EQthisijhhssdinthegdsfg:kernel:est}, the estimates \eqref{EQthisijhhssdinthegdsfg706}--\eqref{EQthisijhhssdinthegdsfg708}, and the bounds \eqref{EQthisijhhssdinthegdsfg709}--\eqref{EQthisijhhssdinthegdsfg711} for the nonlinear term, we obtain \begin{align} \sum_{i+j=2} \norm{\partial_x^i (y \partial_y)^j \omega(t)}_{\YY_\mu} &\les \int_0^t \frac{\NORM{\omega(s)}_s^2}{(\mu_0-\mu-\gamma s)^{1+\alpha}} \,ds + \overline M \les \frac{\sup_{0\leq s \leq t} \NORM{\omega(s)}_s^2}{ \gamma (\mu_0-\mu-\gamma t)^{\alpha}} + \overline M \, . \label{EQthisijhhssdinthegdsfg306} \end{align} For the lower order derivatives we obtain \begin{align} \sum_{i+j\leq 1} \norm{\partial_x^i (y \partial_y)^j \omega(t)}_{\YY_\mu} &\les \int_0^t \frac{\NORM{\omega(s)}_s^2}{(\mu_0-\mu-\gamma s)^{\alpha}} \,ds + \overline M \les \frac{\sup_{0\leq s \leq t} \NORM{\omega(s)}_s^2}{ \gamma} + \overline M \, . \label{EQthisijhhssdinthegdsfg306a} \end{align} By combining \eqref{EQthisijhhssdinthegdsfg306}--\eqref{EQthisijhhssdinthegdsfg306a}, we arrive at \begin{align} \norm{\omega(t)}_{\YY(t)} \les \frac{\sup_{0\leq s \leq t} \NORM{\omega(s)}_s^2}{\gamma} + \overline M \, . \label{EQthisijhhssdinthegdsfg306b} \end{align} To conclude, let \begin{align*} \mathring M = \sum_{i+j\le 3} \lVert \partial_x^i\partial_{y}^j\omega_0\rVert_{L^2(y\ge 1/4)} \les M \, . \end{align*} Recall that the Sobolev estimate \eqref{EQthisijhhssdinthegdsfg286} yields \begin{align} \norm{\omega(t)}_{\ZZZ} \les \left(1+ \frac{\sup_{s\in[0,t]} \NORM{\omega(s)}_s^{3/2}}{\sqrt{\gamma}} \right) e^{\frac{C}{\gamma} (1+ \sup_{s\in[0,t]} \NORM{\omega(s)}_s)} \mathring M \, , \label{EQthisijhhssdinthegdsfg307} \end{align} and this inequality holds pointwise in time for $t < \frac{\mu_0}{2\gamma}$. The constant $C$ and the implicit constants in $\les$ are independent of $\gamma$. \par Combining \eqref{EQthisijhhssdinthegdsfg305b}, \eqref{EQthisijhhssdinthegdsfg306b}, and \eqref{EQthisijhhssdinthegdsfg307}, and taking the supremum in time for $t < \frac{\mu_0}{2\gamma}$, we arrive at \begin{align} \sup_{t \in [0,\frac{\mu_0}{2\gamma}]} \NORM{\omega(t)}_t &\leq C (\tilde M + \overline M ) + \frac{C \sup_{t \in [0,\frac{\mu_0}{2\gamma}]} \NORM{\omega(t)}_t^2}{\sqrt{\gamma}} \notag\\ &\quad + C \mathring M \left(1+ \frac{\sup_{t \in [0,\frac{\mu_0}{2\gamma}]} \NORM{\omega(t)}_t^{3/2}}{\sqrt{\gamma}} \right) e^{\frac{C \mu_0}{\gamma} (1+ \sup_{t \in [0,\frac{\mu_0}{2\gamma}]} \NORM{\omega(t)}_t)} \, , \notag \end{align} where $C\geq 1$ is a constant that depends only on $\mu_0$. Using a standard barrier argument, one may show that if $\gamma$ is chosen sufficiently large, in terms of $\tilde M, \overline M, \mathring M, \mu_0$, we obtain \begin{align} \sup_{t \in [0,\frac{\mu_0}{2\gamma}]} \NORM{\omega(t)}_t \leq 2 C( \tilde M + \overline M + \mathring M ) \, , \notag \end{align} concluding the proof of the theorem. \end{proof} \par \begin{remark} \label{R08} In order to justify the above a~priori estimates, for each $\delta \in (0,1]$, we apply them on the approximate system \begin{align} \omega_t + u^{\delta}\cdot\nabla\omega -\nu\Delta\omega =0 \, , \label{EQthisijhhssdinthegdsfg03} \end{align} where $u^{\delta}$ is a regularization of the velocity in the Biot-Savart law \eqref{EQthisijhhssdinthegdsfg123}--\eqref{EQthisijhhssdinthegdsfg124}. The boundary condition \eqref{EQthisijhhssdinthegdsfg:bdry:vot} becomes $ \nu(\partial_{y}+|\partial_{x}|)\omega=\partial_{y}\Delta^{-1}(u^\delta \cdot\nabla\omega)|_{y=0}$, and the initial condition is replaced by an analytic approximation. The regularized velocity $u^\delta$ is obtained from $\omega$ by a heat extension to time $\delta$, using a homogeneous version of the boundary condition \eqref{EQthisijhhssdinthegdsfg:Stokes}, and then computing the Biot-Savart law for this regularized vorticity. Now, in order to justify our a~priori estimates, we approximate the initial datum $\omega_0$ with an entire one $\omega_0^\delta$. We may show using the approach in this paper that the system \eqref{EQthisijhhssdinthegdsfg03} with entire initial data has a solution which is entire for all time. Then, we perform all the estimates in the present paper on \eqref{EQthisijhhssdinthegdsfg03}, obtaining uniform-in-$\delta$ upper bounds for the norm $\NORM{\cdot}_t$ for all $t\in [0,\frac{\mu_0}{2\gamma})$, thus allowing us to pass those bounds to the limit $\delta\to 0$. \end{remark} \par \subsection{Inviscid limit} \label{sec:inviscid} This section is devoted to the proof of Theorem~\ref{T02}. \begin{proof}[Proof of Theorem~\ref{T02}] Let $T>0$ be as in Theorem~\ref{T01}. In view of the Kato criterion~\cite{Kato84b}, we only need to consider \begin{align} &\nu \int_{0}^T \!\!\!\! \int_{\HH} |\nabla u|^2 \,dxdyds = \nu \int_{0}^T\!\!\!\! \int_{\HH} |\omega|^2 \,dxdyds \notag \\&\indeq =\nu \int_{0}^T\!\!\!\! \int_{\{y\le1/2\}} |\omega|^2 \,dxdyds + \nu \int_{0}^T\!\!\!\! \int_{\{y\ge 1/2\}} |\omega|^2 \,dxdyds \notag \\&\indeq \les \sqrt{\nu} \int_{0}^T \sum_\xi \| e^{\ee(1-y)_+ |\xi|} w(y) \omega_\xi(s)\|_{L^\infty (y \leq 1/2)} \| e^{\ee(1-y)_+ |\xi|} \omega_\xi(s)\|_{L^1 (y \leq 1/2)} \,ds + \nu \int_{0}^T \norm{\omega(s)}_S^2 \, ds \notag \\&\indeq \les \sqrt{\nu} \int_{0}^T \norm{\omega(s)}_{X(s)} \norm{\omega(s)}_{\YY(s)} \,ds + \nu \int_{0}^T \norm{\omega(s)}_S^2 \, ds \notag \\&\indeq \les \sqrt{\nu} C M \, . \llabel{sldkfjg;ls dslksfgj s;lkdfgj ;sldkfjg ;sldkfgj s;ldfkgj s;ldfkgj gfisasdoifaghskcx,.bvnliahglidhgsd gs sdfgh sdfg sldfg sldfgkj slfgj sl;dfgkj sl;dfgjk sldgfkj jiurwe alskjfa;sd fasdfEQthisijhhssdinthegdsfg309} \end{align} Here we used that \colW $\sqrt{\nu} \les w(y)$ \colb and have appealed to the bound \eqref{EQthisijhhssdinthegdsfg35}. By~\cite{Kato84b} it follows that the inviscid limit holds in the topology of $L^\infty(0,T;L^2(\HH))$. \end{proof} \section{Estimates for the $X$ analytic norm} \label{secX} \par Throughout this section we fix $t >0$ and $s\in (0,t)$ and provide the $X$~norm estimate of the three integrals appearing in~\eqref{EQthisijhhssdinthegdsfg:kernel:est}. We first consider the kernel \begin{equation} \label{EQthisijhhssdinthegdsfg36} H_\xi(t,y,z) = \frac{1}{\sqrt{\nu t}}e^{-\frac{(y-z)^2}{4\nu t}} e^{-\nu\xi^2t} \, . \end{equation} \par In the following lemma, we estimate the derivatives up to order one of the integral involving the nonlinearity. \par \cole \begin{lemma} \label{high:y:x} Assume that $\mu$ and $\tilde \mu$ obey the conditions \begin{align} 0 < \mu < \tilde \mu < \mu_0-\gamma s, \qquad \tilde \mu - \mu \geq \frac{1}{C} (\mu_0 - \mu -\gamma s) \, , \label{EQthisijhhssdinthegdsfg:mu:cond} \end{align} for some constant $C\geq 1$. Then, for $(i,j)=(0,0),(1,0),(0,1)$, we have \begin{align} & \left\lVert \partial_x^i (y\partial_{y})^j \int_0^\infty H(t-s, y, z)N(s, z) \,dz\right\rVert_{X_\mu} \notag \\&\indeq \lesssim \lVert \partial_x^i (y\partial_{y})^j N(s)\rVert_{X_{\tilde \mu}} + \lVert N(s)\rVert_{X_{\tilde \mu}} + \frac{1}{(\mu_0-\mu-\gamma s)^{1/2}} \sum_\xi \lVert \partial_x^i \partial_{y}^j N_\xi(s) \rVert_{L^2(y\ge1+\tilde \mu)} \, . \label{EQthisijhhssdinthegdsfg38} \end{align} \end{lemma}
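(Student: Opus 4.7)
The plan is to Fourier-transform in $x$ to reduce the claim to estimates on the scalar kernel $H_\xi$ against $N_\xi$ on the half-line, split the $z$-integration at the interface $z=1+\tilde\mu$ between the analytic and Sobolev regions, and sum in $\xi$ at the end.

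For the analytic region $z\in[0,1+\tilde\mu]$, the key maneuver is to transfer the analyticity weight from $y$ onto $z$ via the pointwise inequality $(1+\mu-\Re y)_+ \leq |y-z| + (1+\tilde\mu-\Re z)_+$, which gives $e^{\ee(1+\mu-y)_+|\xi|} \leq e^{\ee|y-z||\xi|}\, e^{\ee(1+\tilde\mu-z)_+|\xi|}$. The second factor is absorbed into the $X_{\tilde\mu}$ norm of $N_\xi$; the first is swallowed by the heat kernel through a square-completion $-\tfrac{(y-z)^2}{4\nu(t-s)} + \ee|y-z||\xi| - \nu\xi^2(t-s) \leq -(1-\ee^2)\nu\xi^2(t-s) - \tfrac{(|y-z|-2\ee\nu(t-s)|\xi|)^2}{4\nu(t-s)}$, producing a shifted Gaussian with unit $L^1_z$ mass (for $\ee<1$). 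The weight ratio $w(y)/w(z)$ is $O(1)$ on $z\geq y/2$ by property~(b) of Remark~\ref{R04}; on $z\leq y/2$ the constraint $|y-z|\geq y/2$ combines with the Gaussian and the bound $w(y)\leq y$ (or $w(y)=\sqrt{\nu}$ when $y\leq \sqrt{\nu}$) to still yield an $O(1)$ $z$-integral. This handles the cases $(i,j)\in\{(0,0),(1,0)\}$, the latter after pulling $i\xi$ outside the integral.

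For $(i,j)=(0,1)$ I write $y = z + (y-z)$ pointwise in $z$: the $z\partial_y H = -z\partial_z H$ piece is integrated by parts in $z$, with the $z=0$ boundary term killed by the $z$ prefactor and the $z=\infty$ term killed by $H_\xi$, producing $\int_0^{\infty} H_\xi (N_\xi + z\partial_z N_\xi)\,dz$; the $(y-z)\partial_y H = -\tfrac{(y-z)^2}{2\nu(t-s)} H$ piece retains integrable Gaussian structure via the moment bound $\tfrac{r^2}{\nu(t-s)} e^{-r^2/(4\nu(t-s))} \les e^{-r^2/(8\nu(t-s))}$, and each is dominated by $\|(y\partial_y) N\|_{X_{\tilde\mu}}+\|N\|_{X_{\tilde\mu}}$ by the same analytic-zone argument. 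For the Sobolev tail $z\geq 1+\tilde\mu$, the bound $\Re y \leq 1+\mu$ on $\Omega_\mu$ combined with \eqref{EQthisijhhssdinthegdsfg:mu:cond} yields $|y-z|\geq \tilde\mu-\mu \geq (\mu_0-\mu-\gamma s)/C$, so $e^{\ee(1+\mu-y)_+|\xi|}$ is absorbed into the Gaussian as in the analytic region. I then apply Cauchy--Schwarz in $z$, pairing $H_\xi/z$ with $z N_\xi$ (the weight $z\geq 1$ is cost-free) to recognize the $\xi$-component of $\|N\|_{S_{\tilde\mu}}$, and use $\|H_\xi\|_{L^2_z(z\geq 1+\tilde\mu)}^2 \les \tfrac{1}{\sqrt{\nu(t-s)}} e^{-c(\tilde\mu-\mu)^2/(\nu(t-s))}$; since $\sup_{x>0}\sqrt{x}\,e^{-cx}<\infty$ with $x = (\tilde\mu-\mu)^2/(\nu(t-s))$, this is $\les (\tilde\mu-\mu)^{-1}$, so its square root produces exactly the $(\mu_0-\mu-\gamma s)^{-1/2}$ prefactor appearing in \eqref{EQthisijhhssdinthegdsfg38}. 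The derivatives $(y\partial_y)^j$ and $\partial_x^i$ pass through the Sobolev cutoff by the same moment or integration-by-parts manipulations, with no boundary contribution at the interior $z = 1+\tilde\mu$ since the derivatives are in $y$.

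The main obstacle is the pointwise failure of $w(y) \les w(z)$ when $y\leq \sqrt{\nu}$, where $w$ is pinned at $\sqrt{\nu}$: there the Gaussian factor alone must beat the loss $w(y)/w(z) \leq 1/\sqrt{\nu}$, which is exactly what the specific shape of $w$ in \eqref{EQthisijhhssdinthegdsfg55} is designed to allow. The remaining care is in organizing the $(y\partial_y)$ integration by parts so that the $(y-z)^2$ moment stays Gaussian-dominated, and in the bookkeeping of exponents for the Sobolev Cauchy--Schwarz to match $(\mu_0-\mu-\gamma s)^{-1/2}$ precisely.
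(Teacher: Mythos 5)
Your proposal is correct, and its analytic core coincides with the paper's proof: you transfer the weight $e^{\ee(1+\mu-y)_+|\xi|}$ onto $z$ at the price of $e^{\ee|y-z||\xi|}$, absorb that into the heat kernel by completing the square (the paper's \eqref{EQthisijhhssdinthegdsfg:y:z}), handle the ratio $w(y)/w(z)$ separately on $z\gtrsim y$ (Remark~\ref{R04}(b)) and on $z\lesssim y$ where the Gaussian must beat the $\nu^{-1/2}$ loss (the paper's \eqref{EQthisijhhssdinthegdsfg51}--\eqref{EQthisijhhssdinthegdsfg44}), and close the Sobolev tail by Cauchy--Schwarz with $\lVert H_\xi\rVert_{L^2_z(z\ge 1+\tilde\mu)}\les (\tilde\mu-\mu)^{-1/2}$, converted to $(\mu_0-\mu-\gamma s)^{-1/2}$ via \eqref{EQthisijhhssdinthegdsfg:mu:cond}. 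The one genuine structural difference is the conormal derivative: the paper inserts a cutoff $\psi(z/y)$, keeps $y\partial_z H_\xi$ on $z\le 3y/4$ (using $|y|\le 4|y-z|$), and integrates by parts only on $z\ge y/2$, so that no boundary term arises at $z=0$ and no underived $N_\xi$ appears in the Sobolev region; you instead split $y=z+(y-z)$, integrate the $z\partial_z H_\xi$ piece by parts over all of $[0,\infty)$ (the $z$ prefactor killing the $z=0$ boundary term), and control the $(y-z)\partial_y H_\xi$ piece by the Gaussian moment bound. Your route dispenses with the cutoff and its commutator term, at the small cost that $\partial_z(zN_\xi)=N_\xi+z\partial_z N_\xi$ leaves an extra $\sum_\xi\lVert N_\xi\rVert_{L^2(z\ge 1+\tilde\mu)}$ in the Sobolev tail of the $(i,j)=(0,1)$ case, which is not literally on the right side of \eqref{EQthisijhhssdinthegdsfg38}; this is harmless since Lemma~\ref{lem:main:X} sums the Sobolev contributions over all $i+j\le 1$ anyway, but you should either record the correspondingly weaker statement or localize the integration by parts as the paper does.
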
 \colb \par \begin{remark} \label{R06} Inspecting the proof of Lemma~\ref{high:y:x} below, we note that only the following properties of the kernel $H_\xi(y,z,t)$ are used. First, we use that either $\partial_y H_\xi(y,z,t) = \partial_z H_\xi(y,z,t)$ or $\partial_y H_\xi(y,z,t) = -\partial_z H_\xi(y,z,t)$, the property allowing us to transfer $y$ derivatives to $z$ derivatives. For the terms $I_1$ and $I_2$ we use \begin{align} \norm{\chi_{\{0\leq y \leq1+\mu\}} \chi_{\{0\leq z \leq 3y/4\}} \colW \frac{w(y)}{w(z)} \colb \bigl( | H_\xi(t,y,z) | + | y \partial_y H_\xi(t,y,z)| \bigr)}_{L^\infty_y L^1_z} \les 1 \, , \label{EQthisijhhssdinthegdsfg04} \end{align} for the term $I_3$ we need \begin{align} \norm{e^{\ee (z-y)_+ |\xi|} H_\xi(t,y,z)}_{L^\infty_y L^1_z} \les 1 \, , \label{EQthisijhhssdinthegdsfg08} \end{align} while for the term $I_4$ we additionally use \begin{align} \norm{\chi_{\{0\leq y \leq 1+\mu\}} \chi_{\{z\geq 1+\tilde \mu\}} e^{\ee (z-y)_+ |\xi|} H_\xi(t,y,z)}_{L^\infty_y L^2_z} \les \frac{1}{\sqrt{\tilde \mu-\mu}} \, . \label{EQthisijhhssdinthegdsfg39} \end{align} Observe that the kernel $\tilde H_\xi(t,y,z) - H_\xi(t,y,z) = \frac{1}{\sqrt{\nu t}}e^{-\frac{(y+z)^2}{4\nu t}} e^{-\nu\xi^2t}$ also obeys these three properties, and thus Lemma~\ref{high:y:x} holds with $H(t,y,z)$ replaced by $\tilde H(t,y,z)$. \end{remark} \par \begin{proof}[Proof of Lemma~\ref{high:y:x}] Let $y\in \Omega_{\mu}$. For simplicity, we only work with $y\in{\mathbb R}$; an adjustment for the complex case is straight-forward and leads only to notational complications. \par We start with the proof of \eqref{EQthisijhhssdinthegdsfg38} in the case $(i,j) = (0,1)$. Let $\psi \colon \mathbb{R}^+\rightarrow\mathbb{R}^+$ be a smooth non-increasing cut-off function such that $\psi(x)=1$ for $0\le x \le1/2$, and $\psi(x)=0$ for $x\ge3/4$. We first decompose \begin{align} &y \partial_y \int_0^\infty H_\xi(t-s, y, z)N_\xi(s, z) \,dz \notag \\& = - y \int_0^\infty \partial_z H_\xi(t-s,y,z) N_\xi(s,z) \,dz \notag \\& = - y\int_0^{\infty} \psi\left(\frac{z}{y}\right) \partial_z H_\xi(t-s, y, z)N_\xi(s, z) \,dz \notag \\&\indeq - y \int_0^{\infty} \left(1-\psi\left(\frac{z}{y}\right)\right)\partial_z H_\xi(t-s, y, z)N_\xi(s, z) \,dz \notag \\& = - y\int_0^{3y/4} \psi\left(\frac{z}{y}\right) \partial_z H_\xi(t-s, y, z)N_\xi(s, z) \,dz - \int_{y/2}^{3y/4} \psi'\left(\frac{z}{y}\right) H_\xi(t-s, y, z)N_\xi(s, z) \,dz \notag \\&\indeq + y \int_{y/2}^{1+\mu} \left(1-\psi\left(\frac{z}{y}\right)\right) H_\xi(t-s, y, z) \partial_z N_\xi(s, z) \,dz + y \int_{1+\mu}^{\infty} H_\xi(t-s, y, z) \partial_z N_\xi(s, z) \,dz \notag \\& =I_1+I_2+I_3+I_4 \, . \label{EQthisijhhssdinthegdsfg:dec:non} \end{align} The first three terms represent contributions from the analytic region and the last term from the Sobolev region. \par In order to bound $I_{1}$, we compute the derivative of $H_\xi$ as \begin{align} y\partial_{z} H_\xi = - y\partial_{y} H_\xi &= y\frac{(y-z)}{2\nu(t-s)}\frac{1}{\sqrt{\nu (t-s)}} e^{-\frac{(y-z)^2}{4\nu (t-s)}} e^{-\nu\xi^2(t-s)} \, . \label{EQthisijhhssdinthegdsfg10} \end{align} By \begin{equation} |y| \leq 4 |y-z| \comma 0\leq z\leq \frac{3y}{4} \label{EQthisijhhssdinthegdsfg09} \end{equation} we arrive at \begin{align} |y\partial_{z} H_\xi| \lesssim \frac{1}{\sqrt{\nu (t-s)}}e^{-\frac{(y-z)^2}{8\nu (t-s)}} e^{-\nu\xi^2(t-s)} \comma 0\leq z\leq \frac{3y}{4} \label{EQthisijhhssdinthegdsfg11} \end{align} and therefore \begin{align} |I_1| \les \int_0^{3y/4} \frac{1}{\sqrt{\nu (t-s)}} e^{-\frac{(y-z)^2}{8\nu (t-s)}} e^{-\nu\xi^2(t-s)} |N_\xi(s,z)| \,dz \, . \label{EQthisijhhssdinthegdsfg12} \end{align} Next, we claim that the weight function obeys the estimate \colW \begin{equation} \ww(y) \leq w(z) e^{\frac{(y-z)^2}{64\nu (t-s)}} \comma 0 \leq z \leq \frac{3y}{4} \, . \label{EQthisijhhssdinthegdsfg51} \end{equation} \colb In order to prove \eqref{EQthisijhhssdinthegdsfg51} we use that $t-s \leq T \leq 1$ and estimate \begin{align} \ww(y)e^{-\frac{(y-z)^2}{64\nu (t-s)}} & \leq \ww(y)e^{-\frac{y^2}{256\nu (t-s)}} \leq \ww(y)e^{-\frac{y^2}{256\nu}} \les \ww(y)e^{-\frac{y}{16\sqrtnu}} \les \sqrt\nu \, , \label{EQthisijhhssdinthegdsfg44} \end{align} where we used \eqref{EQthisijhhssdinthegdsfg09} in the first and Remark~\ref{R04}(e) in the last step. Then \eqref{EQthisijhhssdinthegdsfg51} follows from $\sqrtnu\les w(z)$ by Remark~\ref{R04}(c). Using \eqref{EQthisijhhssdinthegdsfg12}, \eqref{EQthisijhhssdinthegdsfg51}, and \begin{equation} e^{\ee(1+\mu-y)_+|\xi|} \leq e^{\ee(1+\mu-z)_+|\xi|} \comma 0\le z\le y \, , \label{EQthisijhhssdinthegdsfg06} \end{equation} we obtain \begin{align} |e^{\ee(1+\mu-y)_+|\xi|}\ww(y)I_{1}| &\lesssim \int_0^{3y/4} e^{\ee(1+\mu-z)_+|\xi|} \frac{1}{\sqrt{\nu (t-s)}} e^{-\frac{(y-z)^2}{16\nu (t-s)}} e^{-\nu\xi^2(t-s)} \ww(z) |N_\xi(s,z)| \,dz \notag \\&\les \norm{N_\xi(s)}_{\ZL_{\mu,\nu}} \int_0^{\infty} \frac{1}{\sqrt{\nu (t-s)}} e^{-\frac{(y-z)^2}{16\nu (t-s)}} \,dz \notag \\&\les \norm{N_\xi(s)}_{\ZL_{\mu,\nu}} \, . \llabel{sldkfjg;ls dslksfgj s;lkdfgj ;sldkfjg ;sldkfgj s;ldfkgj s;ldfkgj gfisasdoifaghskcx,.bvnliahglidhgsd gs sdfgh sdfg sldfg sldfgkj slfgj sl;dfgkj sl;dfgjk sldgfkj jiurwe alskjfa;sd fasdfEQthisijhhssdinthegdsfg43} \end{align} Summing in $\xi$ yields the bound \begin{equation} \lVert I_{1}\rVert_{X_\mu} \les \lVert N(s)\rVert_{X_\mu} \, . \label{EQthisijhhssdinthegdsfg45} \end{equation} Next, we consider the term $I_{2}$ on the right side of \eqref{EQthisijhhssdinthegdsfg:dec:non}. Since $\norm{\psi'}_{L^\infty} \les 1$, we directly obtain \begin{align} |I_2| \les \int_{y/2}^{3y/4} \frac{1}{\sqrt{\nu (t-s)}} e^{-\frac{(y-z)^2}{4\nu (t-s)}} e^{-\nu\xi^2(t-s)} |N_\xi(s,z)| \,dz \, , \llabel{sldkfjg;ls dslksfgj s;lkdfgj ;sldkfjg ;sldkfgj s;ldfkgj s;ldfkgj gfisasdoifaghskcx,.bvnliahglidhgsd gs sdfgh sdfg sldfg sldfgkj slfgj sl;dfgkj sl;dfgjk sldgfkj jiurwe alskjfa;sd fasdfEQthisijhhssdinthegdsfg12b} \end{align} which shows that $I_2$ obeys the same estimate as $I_1$ (cf.~\eqref{EQthisijhhssdinthegdsfg12} above). Since the regions of integration also match, the same proof as for \eqref{EQthisijhhssdinthegdsfg45} gives \begin{equation} \lVert I_{2}\rVert_{X_\mu} \le \lVert N(s)\rVert_{X_\mu} \, . \llabel{sldkfjg;ls dslksfgj s;lkdfgj ;sldkfjg ;sldkfgj s;ldfkgj s;ldfkgj gfisasdoifaghskcx,.bvnliahglidhgsd gs sdfgh sdfg sldfg sldfgkj slfgj sl;dfgkj sl;dfgjk sldgfkj jiurwe alskjfa;sd fasdfEQthisijhhssdinthegdsfg45b} \end{equation} \par The term $I_3$ in \eqref{EQthisijhhssdinthegdsfg:dec:non}, which we recall equals \begin{equation} I_{3} = y\int_{y/2}^{1+\mu} \left(1-\psi\left(\frac{z}{y}\right)\right)H_\xi(t-s, y, z)\partial_{z}N_\xi(s, z) \,dz \, , \label{EQthisijhhssdinthegdsfg322} \end{equation} is treated slightly differently. Since $z \geq y/2$ we may trade a power of $y$ for a power of $z$, and we also have \colW $w(y) \les w(z)$ for $z \geq y/2$ \colb by Remark~\ref{R04}(b), where the implicit constant is independent of $\nu$. Therefore, \begin{align} \lVert I_{3}\rVert_{X_\mu}&= \sum_\xi \sup_{y\in\Omega_\mu} \ww(y)e^{\ee(1+\mu-y)_+|\xi|} |I_{3}| \notag \\& \lesssim \sum_\xi \sup_{y\in\Omega_\mu} \int_{y/2}^{1+\mu} e^{\ee(1+\mu-y)_+|\xi|} H_\xi(t-s, y, z)\ww(z)|z\partial_{z}N_\xi(s, z)| \,dz \, . \llabel{sldkfjg;ls dslksfgj s;lkdfgj ;sldkfjg ;sldkfgj s;ldfkgj s;ldfkgj gfisasdoifaghskcx,.bvnliahglidhgsd gs sdfgh sdfg sldfg sldfgkj slfgj sl;dfgkj sl;dfgjk sldgfkj jiurwe alskjfa;sd fasdfEQthisijhhssdinthegdsfg338} \end{align} Now, we use \begin{align} e^{\ee(1+\mu-y)_+|\xi|} \le e^{\ee(1+\mu-z)_+|\xi|}e^{\ee(z-y)_+|\xi|} \le e^{\ee(1+\mu-z)_+|\xi|}e^{\fractext{\ee(y-z)^2}{2\nu (t-s)}} e^{\ee\nu\xi^2(t-s)/2} \, , \label{EQthisijhhssdinthegdsfg:y:z} \end{align} which follows from \begin{align} e^{2 a |\xi|} \leq e^{\fractext{a^2}{c}} e^{c \xi^2} \label{EQthisijhhssdinthegdsfg:y:z:new} \end{align} with suitable $a, c >0$. Choosing $\ee$ sufficiently small, we obtain \begin{align} \lVert I_{3}\rVert_{X_\mu} &\lesssim \sum_\xi \sup_{y\in\Omega_\mu} \int_{y/2}^{1+\mu} \frac{1}{\sqrt{\nu (t-s)}}e^{-\frac{(y-z)^2}{8\nu (t-s)}} e^{-\nu\xi^2(t-s)}e^{\ee(1+\mu-z)_+|\xi|} \ww(z)|z\partial_{z}N_\xi(s, z)| \,dz \notag \\&\lesssim \lVert z\partial_{z}N(s)\rVert_{X_{\mu}} \int_{0}^{\infty} \frac{1}{\sqrt{\nu (t-s)}} e^{-\frac{(y-z)^2}{8\nu (t-s)}} \,dz \, , \llabel{sldkfjg;ls dslksfgj s;lkdfgj ;sldkfjg ;sldkfgj s;ldfkgj s;ldfkgj gfisasdoifaghskcx,.bvnliahglidhgsd gs sdfgh sdfg sldfg sldfgkj slfgj sl;dfgkj sl;dfgjk sldgfkj jiurwe alskjfa;sd fasdfEQthisijhhssdinthegdsfg323} \end{align} whence \begin{equation} \lVert I_{3}\rVert_{X_\mu} \lesssim \lVert z\partial_{z}N(s)\rVert_{X_{\mu}} \, . \label{EQthisijhhssdinthegdsfg77} \end{equation} \par It remains to estimate the term $I_4$ in \eqref{EQthisijhhssdinthegdsfg:dec:non}, which we recall equals \begin{align} I_4 &= y \int_{1+\mu}^{\infty} H_\xi(t-s, y, z) \partial_z N_\xi(s, z) \,dz \,. \notag \end{align} Using Remark~\ref{R04}(a) and~(c), the bound \eqref{EQthisijhhssdinthegdsfg:y:z}, and choosing $\ee$ sufficiently small, we obtain \begin{align} &e^{\ee(1+\mu-y)_+|\xi|} w(y) |I_{4}| \notag\\ &\les \int_{1+\mu}^{1+\tilde\mu} \frac{1}{\sqrt{\nu(t-s)}} e^{\ee(z-y)_+|\xi|} e^{-\frac{(y-z)^2}{4\nu (t-s)}} e^{-\frac{1}{2}\nu\xi^2(t-s)} e^{\ee(1+\mu-z)_+|\xi|} w(z) |z \partial_z N_\xi(s,z)| \,dz \notag\\ &\quad + \int_{1+\tilde \mu}^{\infty} \frac{1}{\sqrt{\nu(t-s)}} e^{\ee(z-y)_+|\xi|} e^{-\frac{(y-z)^2}{4\nu (t-s)}} e^{-\nu\xi^2(t-s)} |\partial_z N_\xi(s,z)| \,dz \notag\\ &\les \norm{z \partial_z N_\xi(s)}_{\ZL_{\tilde \mu,\nu}} \int_{1+\mu}^{1+\tilde\mu} \frac{1}{\sqrt{\nu(t-s)}} e^{-\frac{(y-z)^2}{8\nu (t-s)}} \,dz \notag\\&\indeq + \int_{1+\tilde\mu}^{\infty} \frac{1}{\sqrt{\nu(t-s)}} e^{-\frac{(y-z)^2}{16\nu (t-s)}} e^{-\frac{(\tilde\mu-\mu)^2}{16\nu (t-s)}} |\partial_z N_\xi(s,z)| \,dz \notag\\ &\les \norm{z \partial_z N_\xi(s)}_{\ZL_{\tilde\mu,\nu}} + \frac{e^{-\frac{(\tilde\mu-\mu)^2}{16\nu (t-s)}} }{(\nu(t-s))^{1/4}} \left( \int_{1+\tilde\mu}^{\infty} \frac{1}{\sqrt{\nu(t-s)}} e^{-\frac{(y-z)^2}{8\nu (t-s)}} \,dz\right)^{1/2} \norm{\partial_z N_\xi(s,z)}_{L^2( z\geq 1+\tilde\mu)} \notag\\ &\les \norm{z \partial_z N_\xi(s)}_{\ZL_{\tilde\mu,\nu}} + \frac{1}{(\tilde\mu-\mu)^{1/2}} \norm{\partial_z N_\xi(s,z)}_{L^2( z\geq 1+\tilde\mu)} \, . \notag \end{align} Taking a supremum over $y \in \Omega_\mu$, summing over $\xi$, and recalling the definition of $\tilde\mu$ in \eqref{EQthisijhhssdinthegdsfg:mu:cond}, we deduce \begin{align} \norm{I_4}_{X_\mu} \les \norm{z \partial_z N_\xi(s)}_{X_{\tilde\mu}} + \frac{1}{\sqrt{\mu_0-\mu-\gamma s}} \sum_\xi \norm{\partial_z N_\xi(s,z)}_{L^2( z\geq 1+\tilde\mu)} \, . \notag \end{align} This concludes the proof of \eqref{EQthisijhhssdinthegdsfg38} with $(i,j) = (0,1)$. \par Next, we note that the estimate \eqref{EQthisijhhssdinthegdsfg38} for $(i,j)=(1,0)$ follows from the bound \eqref{EQthisijhhssdinthegdsfg38} with $(i,j)=(0,0)$, by applying the bound to $\partial_x N$ instead of $N$. Therefore, it only remains to establish \eqref{EQthisijhhssdinthegdsfg38} for $(i,j)=(0,0)$. With $\psi$ as above, we decompose the convolution integral into three integrals as \begin{align} & \int_0^\infty H_\xi(t-s, y, z)N_\xi(s, z) \,dz \notag \\&\indeq = \int_0^{3y/4} \psi\left(\frac{z}{y}\right)H_\xi(t-s, y, z) N_\xi(s, z) \,dz + \int_{y/2}^{1+\mu} \left(1-\psi\left(\frac{z}{y}\right)\right)H_\xi(t-s, y, z) N_\xi(s, z) \,dz \notag \\&\indeq\indeq + \int_{1+\mu}^\infty H_\xi(t-s, y, z) N_\xi(s, z) \,dz \notag \\&\indeq =J_1+J_2+J_3 \, . \label{EQthisijhhssdinthegdsfg215} \end{align} Upon inspection, $J_1$ may be bounded in exactly the same way as the term $I_1$ earlier, which gives the bound \begin{align} \norm{J_1}_{X_{\mu}} \les \norm{N(s)}_{X_{\mu}} \, . \notag \end{align} On the other hand, $J_2$ is estimated exactly as the term $I_3$ above, and we obtain \begin{align} \norm{J_2}_{X_{\mu}} \les \norm{N(s)}_{X_{\mu}} \, . \notag \end{align} Lastly, $J_3$ is bounded just as $I_4$, by splitting the integral on $[1+\mu,\infty)$ into an integral on $[1+\mu,1+\tilde\mu]$ and one on $[1+\tilde\mu,\infty)$. This results in the bound \begin{align} \norm{J_3}_{X_\mu} \les \norm{N_\xi(s)}_{X_{\tilde\mu}} + \frac{1}{\sqrt{\mu_0-\mu-\gamma s}} \sum_\xi \norm{N_\xi(s,z)}_{L^2( z\geq 1+\tilde\mu)} \, , \notag \end{align} concluding the proof of the lemma. \end{proof} \par \cole \begin{lemma} \label{rem:x} Let $\mu < \tilde \mu$ obey \eqref{EQthisijhhssdinthegdsfg:mu:cond}. For $(i,j)=(0,0),(1,0),(0,1)$, we have \begin{align} & \left\lVert \partial_{x}^{i}(y\partial_{y})^{j} \int_0^\infty R(t-s, y, z)N(s, z) \,dz\right\rVert_{X_\mu} \notag \\&\indeq \lesssim \lVert \partial_{x}^{i}(y\partial_{y})^{j}N(s)\rVert_{X_{\tilde \mu}} + \lVert N(s)\rVert_{X_{\tilde \mu}} + \frac{1}{(\mu_0-\mu-\gamma s)^{1/2}} \sum_\xi \lVert \partial_{x}^{i}\partial_{y}^{j} N_\xi\rVert_{L^2(y\ge1+\tilde \mu)} \, . \label{EQthisijhhssdinthegdsfg38aa} \end{align} \end{lemma}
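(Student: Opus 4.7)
The plan is to mimic the argument of Lemma~\ref{high:y:x} by first dominating the residual kernel pointwise by a sum of two simpler kernels, and then verifying the three kernel properties identified in Remark~\ref{R06}. Specifically, by \eqref{EQthisijhhssdinthegdsfg33} and Remark~\ref{R01}, for $k\in\{0,1,2\}$ we have the pointwise bound $|(y\partial_{y})^k R_\xi(t-s,y,z)| \les R^{(1)}_\xi + R^{(2)}_\xi$ with
\begin{align}
R^{(1)}_\xi := b\, e^{-\theta_0 b(y+z)/2}, \qquad R^{(2)}_\xi := \frac{1}{\sqrt{\nu(t-s)}}\, e^{-\theta_0 (y+z)^2/(2\nu(t-s))}\, e^{-\nu\xi^2(t-s)/8}. \notag
\end{align}
Moreover, since $R_\xi$ depends only on $y+z$ (Lemma~\ref{Green:fun}), the identity $\partial_y R_\xi = \partial_z R_\xi$ holds, which legitimizes the same integration-by-parts splitting $I_1+I_2+I_3+I_4$ used in \eqref{EQthisijhhssdinthegdsfg:dec:non} for the case $(i,j)=(0,1)$, and the analogous three-region split for $(i,j)=(0,0)$. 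The case $(i,j)=(1,0)$ reduces to $(0,0)$ applied to $\partial_x N$.

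For the Gaussian piece $R^{(2)}_\xi$, I would invoke Remark~\ref{R06} verbatim. Up to rescaling $\nu(t-s) \mapsto 2\nu(t-s)/\theta_0$ and replacing $e^{-\nu\xi^2(t-s)}$ with $e^{-\nu\xi^2(t-s)/8}$ (neither of which affects the bounds), $R^{(2)}_\xi$ has exactly the profile of $\tilde H_\xi - H_\xi$, so the three kernel properties \eqref{EQthisijhhssdinthegdsfg04}, \eqref{EQthisijhhssdinthegdsfg08}, and \eqref{EQthisijhhssdinthegdsfg39} transfer with the same constants, yielding the desired bound on the $R^{(2)}_\xi$ contribution with no further work.

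The substantive step is to verify the analogs of \eqref{EQthisijhhssdinthegdsfg04}, \eqref{EQthisijhhssdinthegdsfg08}, and \eqref{EQthisijhhssdinthegdsfg39} for the boundary piece $R^{(1)}_\xi$. The central observation is $b \geq 1/\sqrtnu$, so the decay $e^{-\theta_0 b(y+z)/2}$ is at least as strong as $e^{-\theta_0(y+z)/(2\sqrtnu)}$. For the analog of \eqref{EQthisijhhssdinthegdsfg04} I would use Remark~\ref{R04}(e) with the constant $C$ chosen so that $1/C \ll \theta_0/4$, obtaining $w(y)\, e^{-\theta_0 b y/4} \les \sqrtnu \les w(z)$, after which $\int_0^\infty b\, e^{-\theta_0 b z/4}\, dz \les 1$ closes the estimate. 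For \eqref{EQthisijhhssdinthegdsfg08} I would use $|\xi| \leq b$ together with $\ee \leq \theta_0/8$ so that $e^{\ee(z-y)_+|\xi|} R^{(1)}_\xi \les b\, e^{-\theta_0 b(y+z)/4}$, which integrates to a constant in $z$. Finally, for \eqref{EQthisijhhssdinthegdsfg39}, when $y\leq 1+\mu$ and $z\geq 1+\tilde\mu$ the pointwise bound already carries the factor $e^{-\theta_0 b(\tilde\mu-\mu)/4}$, which decays exponentially in $(\tilde\mu-\mu)/\sqrtnu$ and in particular dominates $1/\sqrt{\tilde\mu-\mu}$ uniformly in $\nu$.

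The main obstacle will be the weight-shifting step for the terms $I_1, I_2$ (equivalently $J_1$): since $R^{(1)}_\xi$ has no Gaussian in $y-z$, the device \eqref{EQthisijhhssdinthegdsfg51} used for $H_\xi$ does not apply directly, and the ratio $w(y)/w(z)$ must instead be absorbed purely through the exponential decay in $by$ combined with Remark~\ref{R04}(e). Once the constants $\theta_0, \ee, C$ are fixed compatibly (all depending only on $\mu_0$ and $\theta_0$), the remaining estimates are essentially identical to those in Lemma~\ref{high:y:x}.
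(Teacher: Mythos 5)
Your proposal is correct and follows essentially the same route as the paper: both reduce the lemma to verifying the three kernel properties of Remark~\ref{R06} for $R_\xi$, splitting its bound \eqref{EQthisijhhssdinthegdsfg33} into the boundary piece $b e^{-\theta_0 b(y+z)/2}$ and the Gaussian piece, and using $b\geq \nu^{-1/2}$, $b\geq|\xi|$, Remark~\ref{R04}(e), and the smallness of $\ee$ relative to $\theta_0$ exactly as you describe. The only cosmetic difference is in \eqref{EQthisijhhssdinthegdsfg39}, where the paper uses $z\geq 1$ to get an $O(1)$ bound while you extract $1/\sqrt{\tilde\mu-\mu}$ from $b^{1/2}e^{-cb(\tilde\mu-\mu)}$; both suffice.
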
 \colb \par \begin{proof}[Proof of Lemma~\ref{rem:x}] In order to establish \eqref{EQthisijhhssdinthegdsfg38aa}, it suffices to verify the assumptions in Remark~\ref{R06} for the kernel $R_{\xi}(t,y,z)$. We recall that $\partial_y R_\xi(t,y,z) = - \partial_z R_\xi(t,y,z)$, which is necessary to change $y$ derivatives to $z$ derivatives. First fix $y\in[0,1+\mu]$ and $z\in[0,3y/4]$. Then, since \colW $w(z) \gtrsim \sqrt{\nu}$\colb, from \eqref{EQthisijhhssdinthegdsfg33} we have \begin{align} \frac{\ww(y)}{\ww(z)} \bigl( |R_{\xi}(t,y,z)| + |y\partial_{y}R_{\xi}(t,y,z)| \bigr) & \les \frac{\ww(y)}{\ww(z)} \left( be^{-\frac{\theta_0}{2}b(y+z)} + \frac{1}{\sqrt{\nu t}}e^{-\frac{\theta_0}{2} \frac{(y+z)^2}{\nu t}}e^{-\frac{\nu\xi^2t}{8}} \right) \notag \\& \les \frac{\ww(y)}{\sqrtnu} \left( be^{-\frac{\theta_0}{2}b(y+z)} + \frac{1}{\sqrt{\nu t}}e^{-\frac{\theta_0}{2} \frac{y^2+z^2}{\nu t}} \right) \, . \label{EQthisijhhssdinthegdsfg41} \end{align} Next, we use $b\geq \sqrt{\nu}^{-1}$ and thus by Remark~\ref{R04}(e) we have \begin{align} \frac{\ww(y)}{\sqrtnu} e^{-\frac{\theta_0}{4} b y} &\les e^{\frac{y}{C\sqrtnu}} e^{-\frac{\theta_0}{4} \frac{y}{\sqrt{\nu}} } \les 1 \, , \label{EQthisijhhssdinthegdsfg185} \end{align} provided that $C$ is sufficiently large (in terms of $\theta_0$). Similarly to \eqref{EQthisijhhssdinthegdsfg44}, using Remark~\ref{R04}(e) and the fact that $t\leq 1$ we have \begin{align} \frac{w(y)}{\sqrt{\nu}} e^{-\frac{\theta_0}{2} \frac{y^2}{\nu t}} \les \frac{w(y)}{\sqrt{\nu}} e^{-\frac{\theta_0}{2} \frac{y}{\sqrt{\nu}}} \les 1 \, . \llabel{sldkfjg;ls dslksfgj s;lkdfgj ;sldkfjg ;sldkfgj s;ldfkgj s;ldfkgj gfisasdoifaghskcx,.bvnliahglidhgsd gs sdfgh sdfg sldfg sldfgkj slfgj sl;dfgkj sl;dfgjk sldgfkj jiurwe alskjfa;sd fasdfEQthisijhhssdinthegdsfg158a} \end{align} Therefore, the right side of \eqref{EQthisijhhssdinthegdsfg41} is bounded pointwise in $y$ by \begin{align} b e^{-\frac{\theta_0}{4} bz} + \frac{1}{\sqrt{\nu t}} e^{-\frac{\theta_0}{2} \frac{z^2}{\nu t}} \, . \llabel{sldkfjg;ls dslksfgj s;lkdfgj ;sldkfjg ;sldkfgj s;ldfkgj s;ldfkgj gfisasdoifaghskcx,.bvnliahglidhgsd gs sdfgh sdfg sldfg sldfgkj slfgj sl;dfgkj sl;dfgjk sldgfkj jiurwe alskjfa;sd fasdfEQthisijhhssdinthegdsfg58} \end{align} Using that $\nnorm{be^{-\frac{\theta_0 b z}{4} }}_{L^1_z} \les 1$ and $\nnorm{\frac{1}{\sqrt{\nu t}} e^{-\frac{\theta_0 z^2}{2\nu t}}}_{L_{z}^{1}}\les 1$, the condition \eqref{EQthisijhhssdinthegdsfg04} for $R$ follows. \par In order to verify the condition~\eqref{EQthisijhhssdinthegdsfg08}, we use that $b\geq |\xi|$, and provided $\epsilon_0$ is sufficiently small in terms of $\theta_0$, we obtain from \eqref{EQthisijhhssdinthegdsfg:y:z:new} that \begin{align} e^{\ee (z-y)_+ |\xi|} |R_\xi(y,z,t)| &\les e^{\ee (z-y)_+ |\xi|} \biggl( b e^{-\theta_0 b z} + \frac{1}{\sqrt{\nu t}} e^{-\frac{\theta_0}{2} \frac{y^2+z^2}{\nu t}} e^{-\frac{\nu \xi^2 t}{8}} \biggr) \notag \\& \les be^{-\frac12\theta_0 b z} + \biggl( e^{\epsilon_0 z |\xi|} e^{-\frac{\theta_0}{4} \frac{z^2}{\nu t}} e^{-\frac{\nu \xi^2 t}{8}} \biggr) \frac{1}{\sqrt{\nu t}} e^{-\frac{\theta_0}{4} \frac{z^2}{\nu t}} \notag \\& \les be^{-\frac12\theta_0 b z} + \frac{1}{\sqrt{\nu t}} e^{-\frac{\theta_0}{4} \frac{z^2}{\nu t}} \, , \llabel{sldkfjg;ls dslksfgj s;lkdfgj ;sldkfjg ;sldkfgj s;ldfkgj s;ldfkgj gfisasdoifaghskcx,.bvnliahglidhgsd gs sdfgh sdfg sldfg sldfgkj slfgj sl;dfgkj sl;dfgjk sldgfkj jiurwe alskjfa;sd fasdfEQthisijhhssdinthegdsfg49} \end{align} and \eqref{EQthisijhhssdinthegdsfg08}, for this kernel, follows by integrating in $z$. Finally, we check \eqref{EQthisijhhssdinthegdsfg39}. For this, let $y\in[0,1+\mu]$ and $z\geq 1+\tilde\mu$. Then \begin{align} e^{\ee(z-y)_+|\xi|} | R_{\xi}(y,z,t) | &\les e^{\ee(z-y)_+|\xi|} \biggl( b e^{-\theta_0 b (y+z)} + \frac{1}{\sqrt{\nu t}} e^{-\frac{\theta_0}{2} \frac{y^2+z^2}{\nu t}} e^{-\frac{\nu \xi^2 t}{8}} \biggr) \notag \\& \les b e^{-\frac12\theta_0 b z} + \frac{1}{\sqrt{\nu t}}e^{-\frac{\theta_0}{4} \frac{ z^2}{\nu t}} \les b e^{-\frac12\theta_0 b z} + \frac{1}{(\nu t)^{1/4}}\left( \frac{z^{2}}{\nu t} \right)^{1/4}  e^{-\frac{\theta_0}{4} \frac{z^2}{\nu t}} \notag \\& \les b^{1/2}e^{-\frac14\theta_0 b z} + \frac{1}{(\nu t)^{1/4}}e^{-\frac{\theta_0}{8} \frac{ z^2}{\nu t}} \, , \label{EQthisijhhssdinthegdsfg50} \end{align} where we used $z\geq 1+\tilde \mu \geq 1$ in the third inequality. Finally, note that the $L^{2}$ norm of the far right hand side of \eqref{EQthisijhhssdinthegdsfg50} over $[1+\tilde\mu,\infty)$ is less than a constant. \end{proof} \par Next, we consider the trace kernel. \par \cole \begin{lemma} \label{tra:x} Let $\mu \in (0,\mu_0-\gamma s)$ be arbitrary. For $(i,j)=(0,0),(1,0),(0,1)$, we have the inequality \begin{align} \left\lVert \partial_{x}^{i}(y\partial_{y})^{j}G(t-s, y,0)\partial_{z}\Delta^{-1}N (s, z)|_{z=0} \right\rVert_{X_\mu} &\lesssim \frac{1}{\sqrt{t-s}}(\lVert \partial_{x}^{i}N(s)\rVert_{\YY_{\mu}} + \lVert \partial_x^i N(s)\rVert_{S_\mu}) + \norm{\partial_x^i N(s)}_{X_\mu}\, . \label{EQthisijhhssdinthegdsfg171} \end{align} \end{lemma}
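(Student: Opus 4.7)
The plan is to decompose $G_\xi(t-s,y,0) = \tilde H_\xi(t-s,y,0) + R_\xi(t-s,y,0)$ via Lemma~\ref{Green:fun} and further split $R_\xi$ according to \eqref{EQthisijhhssdinthegdsfg101} into its time-decaying Gaussian part and the steady boundary piece controlled by $b\,e^{-\theta_0 b y}$. The Dirichlet Green's function of $\Delta_\xi = \partial_y^2 - \xi^2$ on the half-line yields $\partial_y \Delta_\xi^{-1} f(y)|_{y=0} = -\int_0^\infty e^{-|\xi|z} f(z)\,dz$, hence
\begin{equation*}
B_\xi(s) = \int_0^\infty e^{-|\xi|z}\, N_\xi(s,z)\,dz.
\end{equation*}
Splitting this into the analytic piece $z\in[0,1+\mu]$ and the Sobolev piece $z\geq 1+\mu$, using the identity $\ee(1+\mu)-z-\ee(1+\mu-z)=-(1-\ee)z\leq 0$ on the former and Cauchy--Schwarz with weight $1/z^2$ on the latter, yields the first key estimate
\begin{equation*}
e^{\ee(1+\mu)|\xi|}\,|B_\xi(s)| \les \bigl\|e^{\ee(1+\mu-z)|\xi|} N_\xi(s)\bigr\|_{L^1([0,1+\mu])} + e^{-(1-\ee)(1+\mu)|\xi|}\, \|N_\xi(s)\|_{S_\mu}.
\end{equation*}

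Since $e^{\ee(1+\mu-y)_+|\xi|}\leq e^{\ee(1+\mu)|\xi|}$ for $y\geq 0$, the $X_\mu$-weight decouples from the $y$-supremum on the kernel. An application of Remark~\ref{R04}(e), together with $t-s\leq 1$, gives $w(y)\,e^{-y^2/(C\nu(t-s))}\les \sqrt{\nu}$ and therefore
\begin{equation*}
\sup_{y\in\Omega_\mu} w(y)\,|\tilde H_\xi(t-s,y,0)| \les \frac{e^{-\nu\xi^2(t-s)}}{\sqrt{t-s}},
\end{equation*}
and analogously for the Gaussian piece of $R_\xi(t-s,y,0)$. Multiplying by the $B_\xi$-bound, using $e^{-\nu\xi^2(t-s)}\leq 1$, and summing in $\xi$ produces the contribution $\tfrac{1}{\sqrt{t-s}}\bigl(\|N(s)\|_{\YY_\mu}+\|N(s)\|_{S_\mu}\bigr)$. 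For $(i,j)=(0,1)$ the extra $(y\partial_y)$ acts only on the kernel and preserves the same form of bound by Remark~\ref{R01}; for $(i,j)=(1,0)$ it simply produces an $i\xi$ factor which is absorbed into the $\|\partial_x N\|$-type norms on the right side.

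The subtle contribution is the non-time-decaying piece $b\,e^{-\theta_0 b y}$ of $R_\xi$. Since $b\geq 1/\sqrt{\nu}$, Remark~\ref{R04}(e) gives $w(y)\,e^{-\theta_0 b y/2}\les \sqrt{\nu}$, which leads to
\begin{equation*}
\sup_{y\in\Omega_\mu} w(y)\, e^{\ee(1+\mu-y)_+|\xi|}\, b\, e^{-\theta_0 b y} \les (1+\sqrt{\nu}\,|\xi|)\, e^{\ee(1+\mu)|\xi|}.
\end{equation*}
The $1$-component, combined with the first $B_\xi$-bound and summed in $\xi$, produces $\|N\|_{\YY_\mu}+\|N\|_{S_\mu}$, absorbed into the $\tfrac{1}{\sqrt{t-s}}(\cdot)$ term since $t-s\leq 1$. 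The $\sqrt{\nu}|\xi|$-component is the source of the $\|N\|_{X_\mu}$ term on the right: we estimate $B_\xi$ a second time using the pointwise bound $|N_\xi(z)|\leq \|e^{\ee(1+\mu-z)|\xi|}N_\xi\|_{\ZL_{\mu,\nu}}/(w(z)e^{\ee(1+\mu-z)|\xi|})$, obtaining
\begin{equation*}
|\xi|\sqrt{\nu}\,e^{\ee(1+\mu)|\xi|}\!\int_0^{1+\mu}\!e^{-|\xi|z}|N_\xi|\,dz \leq \|e^{\ee(1+\mu-z)|\xi|}N_\xi\|_{\ZL_{\mu,\nu}} \cdot |\xi|\sqrt{\nu}\!\int_0^{1+\mu}\!\frac{e^{-(1-\ee)|\xi|z}}{w(z)}\,dz.
\end{equation*}

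The main technical obstacle is then the verification of the uniform estimate
\begin{equation*}
|\xi|\sqrt{\nu}\int_0^{1+\mu}\frac{e^{-(1-\ee)|\xi|z}}{w(z)}\,dz \les 1
\end{equation*}
for all $\xi\in\ZZ$ and $\nu\in(0,1]$. This is proved by splitting the $z$-integral according to the piecewise definition of $w$ into the subintervals $[0,\sqrt{\nu}]$, $[\sqrt{\nu},1]$, and $[1,1+\mu]$. On the near-boundary piece the factor $1/\sqrt{\nu}$ in $1/w$ cancels the prefactor $|\xi|\sqrt{\nu}$ after an explicit integration. On the middle piece the substitution $u=(1-\ee)|\xi|z$ reduces matters to controlling $|\xi|\sqrt{\nu}\int_{(1-\ee)|\xi|\sqrt{\nu}}^{(1-\ee)|\xi|}\frac{e^{-u}}{u}\,du$; one distinguishes the regime $|\xi|\sqrt{\nu}\geq 1$, where $\int_a^\infty e^{-u}/u\,du\leq e^{-a}/a$ supplies a factor which absorbs the $|\xi|\sqrt{\nu}$ prefactor, from the regime $|\xi|\sqrt{\nu}<1$, where the logarithmic singularity $\log(1/(|\xi|\sqrt{\nu}))$ is absorbed by $|\xi|\sqrt{\nu}$ because $x\log(1/x)$ is bounded on $(0,1]$. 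The far piece $[1,1+\mu]$ is immediate. Summing in $\xi$ then produces $\|N\|_{X_\mu}$ from the analytic contribution, together with an additional $\|N\|_{S_\mu}$ that is absorbed into the $\tfrac{1}{\sqrt{t-s}}$-term, completing the proof of \eqref{EQthisijhhssdinthegdsfg171}.
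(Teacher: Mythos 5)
Your proposal is correct and follows essentially the same route as the paper: decompose $G_\xi(t-s,y,0)$ into the heat-kernel part and the residual, use the explicit trace formula for $\partial_z\Delta^{-1}N|_{z=0}$ split over $[0,1+\mu]$ and $[1+\mu,\infty)$, and handle the non-decaying $b e^{-\theta_0 b y}$ piece by extracting $\sqrt{\nu}\,b = 1+\sqrt{\nu}|\xi|$, with the $|\xi|$-part landing on the $X_\mu$ norm. The only difference is cosmetic: your three-region computation of $|\xi|\sqrt{\nu}\int_0^{1+\mu} e^{-(1-\ee)|\xi|z}/w(z)\,dz$ (with the logarithmic analysis) can be collapsed to one line by using $w(z)\gtrsim\sqrt{\nu}$ from Remark~\ref{R04}(c), which is exactly how the paper absorbs this term via $\sqrt{\nu}|\xi|\lesssim |\xi|\,w(z)$ and $\lVert\, |\xi|e^{-|\xi|z/2}\rVert_{L^1_z}\lesssim 1$.
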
 \colb \par \begin{proof}[Proof of Lemma~\ref{tra:x}] For $\xi \in \ZZ$, the kernel $G_\xi(t-s,y,0)$ is the sum of two trace operators \begin{equation} T_1(t-s, y) = \tilde H_\xi(t-s,y,0) = \frac{2}{\sqrt{\nu (t-s)}}e^{-\frac{y^2}{4\nu (t-s)}} e^{-\nu\xi^2(t-s)} \label{EQthisijhhssdinthegdsfg167} \end{equation} and \begin{equation} T_2(t-s, y) = R_{\xi}(t-s,y,0) \, . \llabel{sldkfjg;ls dslksfgj s;lkdfgj ;sldkfjg ;sldkfgj s;ldfkgj s;ldfkgj gfisasdoifaghskcx,.bvnliahglidhgsd gs sdfgh sdfg sldfg sldfgkj slfgj sl;dfgkj sl;dfgjk sldgfkj jiurwe alskjfa;sd fasdfEQthisijhhssdinthegdsfg168} \end{equation} Recall that \begin{equation} |y\partial_{y}T_1 (t-s,y)|=\frac{1}{\sqrt{\nu (t-s)}}e^{-\frac{y^2}{4\nu (t-s)}} e^{-\nu\xi^2(t-s)}\frac{y^2}{2\nu(t-s)}\lesssim \frac{1}{\sqrt{\nu (t-s)}}e^{-\frac{y^2}{8\nu (t-s)}} e^{-\nu\xi^2(t-s)} \label{EQthisijhhssdinthegdsfg169} \end{equation} and \begin{align} |T_2(t-s,y) | + |y \partial_y T_2(t-s,y)| \les b e^{-\frac 12 \theta_0 b y } + \frac{1}{\sqrt{\nu (t-s)}} e^{-\frac{\theta_0}{2} \frac{y^2}{\nu (t-s)}} e^{-\frac{\nu\xi^2(t-s)}{8}} \label{EQthisijhhssdinthegdsfg169b} \, . \end{align} \par We first prove \eqref{EQthisijhhssdinthegdsfg171} in the case $i=j=0$. Similarly to the equation~\eqref{EQthisijhhssdinthegdsfg123} in Lemma~\ref{bio:sav} below, we have the representation formula \begin{align} \left( \partial_{z}\Delta^{-1}N_\xi(s, z)\right) |_{z=0} &= - \int_0^\infty e^{-|\xi| z}N_\xi(s, z) \,d z \notag \\&= - \int_0^{1+\mu} e^{-|\xi| z}N_\xi(s, z) \,d z - \int_{1+\mu}^\infty e^{-|\xi| z}N_\xi(s, z) \,d z = I_1 + I_2 \, . \label{EQthisijhhssdinthegdsfg229} \end{align} First we treat the $T_1$ contribution. Using \eqref{EQthisijhhssdinthegdsfg44} and choosing $\ee$ sufficiently small, we have \begin{align} &|e^{\ee(1+\mu-y)_+|\xi|}\ww(y)T_1(t-s, y)I_1| \notag \\&\indeq\lesssim \frac{1}{\sqrt{t-s}} e^{\ee(1+\mu-y)_+|\xi|} e^{-\frac{y^2}{8\nu (t-s)}} e^{-\nu\xi^2(t-s)}\int_0^{1+\mu} e^{-|\xi| z} |N_\xi(s, z)| \,dz \notag \\&\indeq\lesssim \frac{1}{\sqrt{t-s}} \int_0^{1+\mu} e^{-|\xi| z}e^{\ee(z-y)_+|\xi|}e^{\ee(1+\mu-z)_+|\xi|} |N_\xi(s, z)| \,dz \notag \\&\indeq\lesssim \frac{1}{\sqrt{t-s}} \int_0^{1+\mu} e^{\ee(1+\mu-z)_+|\xi|} |N_\xi(s, z)| \,dz \, , \llabel{sldkfjg;ls dslksfgj s;lkdfgj ;sldkfjg ;sldkfgj s;ldfkgj s;ldfkgj gfisasdoifaghskcx,.bvnliahglidhgsd gs sdfgh sdfg sldfg sldfgkj slfgj sl;dfgkj sl;dfgjk sldgfkj jiurwe alskjfa;sd fasdfEQthisijhhssdinthegdsfg174} \end{align} leading to \begin{align} \left\lVert T_1(t-s, y)I_1\right\rVert_{X_\mu} \lesssim \frac{1}{\sqrt{t-s}}\lVert N(s)\rVert_{\YY_{\mu}} \label{EQthisijhhssdinthegdsfg175} \end{align} upon summing in $\xi$. For the integral $I_2$ in \eqref{EQthisijhhssdinthegdsfg229}, we similarly use \eqref{EQthisijhhssdinthegdsfg44} and obtain the inequality \begin{align} |e^{\ee(1+\mu-y)_+|\xi|}\ww(y)T_1(t-s, y)I_2| &\lesssim \frac{1}{\sqrt{t-s}} \int^\infty_{1+\mu} | N_\xi(s, z) | \,dz \notag\\ &\lesssim \frac{1}{\sqrt{t-s}} \norm{z N_\xi(s, z)}_{L^2(z\geq 1+\mu)} \, , \llabel{sldkfjg;ls dslksfgj s;lkdfgj ;sldkfjg ;sldkfgj s;ldfkgj s;ldfkgj gfisasdoifaghskcx,.bvnliahglidhgsd gs sdfgh sdfg sldfg sldfgkj slfgj sl;dfgkj sl;dfgjk sldgfkj jiurwe alskjfa;sd fasdfEQthisijhhssdinthegdsfg176} \end{align} implying \begin{align} \left\lVert T_1(t-s, y)I_2\right\rVert_{X_\mu} \lesssim \frac{1}{\sqrt{t-s}}\lVert N(s)\rVert_{S_\mu} \, . \label{EQthisijhhssdinthegdsfg177} \end{align} For the $T_2$ contribution, appealing to \eqref{EQthisijhhssdinthegdsfg185} and using $b \sqrt{\nu} = \colW 1 + |\xi| \sqrt{\nu} \les 1 + |\xi| w(z)\colb $ for any $z\geq0$, we have \begin{align} &|e^{\ee(1+\mu-y)_+|\xi|}\ww(y) b e^{-\frac 12 \theta_0 b y} I_1| \notag \\&\indeq\lesssim e^{\ee(1+\mu-y)_+|\xi|} b \sqrt{\nu} \int_0^{1+\mu} e^{-|\xi| z} |N_\xi(s, z)| \,dz \notag \\&\indeq\lesssim \int_0^{1+\mu} e^{-|\xi| z} e^{\ee(z-y)_+|\xi|}e^{\ee(1+\mu-z)_+|\xi|} |N_\xi(s, z)| \,dz \notag \\&\indeq\indeq + \int_0^{1+\mu} |\xi| e^{-|\xi| z} e^{\ee(z-y)_+|\xi|}e^{\ee(1+\mu-z)_+|\xi|} w(z) |N_\xi(s, z)| \,dz \notag \\&\indeq\lesssim \int_0^{1+\mu} e^{\ee(1+\mu-z)_+|\xi|} |N_\xi(s, z)| \,dz \notag \\&\indeq\indeq + \int_0^{1+\mu} |\xi| e^{-\frac 12 |\xi| z} e^{\ee(1+\mu-z)_+|\xi|} w(z) |N_\xi(s, z)| \,dz \, . \llabel{sldkfjg;ls dslksfgj s;lkdfgj ;sldkfjg ;sldkfgj s;ldfkgj s;ldfkgj gfisasdoifaghskcx,.bvnliahglidhgsd gs sdfgh sdfg sldfg sldfgkj slfgj sl;dfgkj sl;dfgjk sldgfkj jiurwe alskjfa;sd fasdfEQthisijhhssdinthegdsfg186} \end{align} The first of the above terms is estimated using the $\YY_\mu$~norm, while the second one is bounded using the $X_\mu$~norm. Here we use that $\norm{ |\xi| e^{-\frac 12 |\xi| z}}_{L^1_z} \les 1$. The above estimate, combined with the fact that the second term in the upper bound for $T_2$ (cf.~\eqref{EQthisijhhssdinthegdsfg169b}) is estimated just as $T_1$, leads to the bound \begin{align} \left\lVert T_2(t-s, y)I_1\right\rVert_{X_\mu} \lesssim \frac{1}{\sqrt{t-s}}\lVert N(s)\rVert_{\YY_{\mu}} + \norm{N(s)}_{X_\mu} \, . \label{EQthisijhhssdinthegdsfg187} \end{align} For the contribution from $T_2$ to the second integral in \eqref{EQthisijhhssdinthegdsfg229} we use \eqref{EQthisijhhssdinthegdsfg185} and the bound $\sqrt{\nu} b \les 1 + |\xi|$, to obtain \begin{align} |e^{\ee(1+\mu-y)_+|\xi|}\ww(y) b e^{-\frac 12 \theta_0 b y} I_2| &\les e^{\ee(1+\mu-y)_+|\xi|} \sqrt{\nu} b e^{-|\xi|(1+\mu)} \int^\infty_{1+\mu} |N_\xi(s, z)| \,dz \notag\\ &\lesssim \lVert z N_\xi(s)\rVert_{L^2(z\ge1+\mu)} \, . \llabel{sldkfjg;ls dslksfgj s;lkdfgj ;sldkfjg ;sldkfgj s;ldfkgj s;ldfkgj gfisasdoifaghskcx,.bvnliahglidhgsd gs sdfgh sdfg sldfg sldfgkj slfgj sl;dfgkj sl;dfgjk sldgfkj jiurwe alskjfa;sd fasdfEQthisijhhssdinthegdsfg188} \end{align} Again, since the second term in the upper bound for $T_2$ (cf.~\eqref{EQthisijhhssdinthegdsfg169b}) is estimated just as $T_1$ we obtain \begin{align} \left\lVert T_{2}(t-s, y)I_2\right\rVert_{X_\mu} \lesssim \frac{1}{\sqrt{t-s}} \lVert y N(s)\rVert_{L^2(y\ge1+\mu)} \, . \label{EQthisijhhssdinthegdsfg189} \end{align} Combining \eqref{EQthisijhhssdinthegdsfg175}, \eqref{EQthisijhhssdinthegdsfg177}, \eqref{EQthisijhhssdinthegdsfg187}, and \eqref{EQthisijhhssdinthegdsfg189} concludes the proof of \eqref{EQthisijhhssdinthegdsfg171} when $(i,j)=(0,0)$. For $(i,j)=(0,1)$, we use the fact that the conormal derivative of the kernel obeys similar estimates as the kernel itself, which holds in view of \eqref{EQthisijhhssdinthegdsfg169} and \eqref{EQthisijhhssdinthegdsfg169b}. Lastly, for $(i,j)=(1,0)$, the $\partial_x$ derivative simply acts on the $N_\xi$ term in \eqref{EQthisijhhssdinthegdsfg229}, and the above proof applies. \end{proof} \par Finally, we estimate the first term in the mild representation of the solution~\eqref{EQthisijhhssdinthegdsfg:kernel:est}. \par \cole \begin{lemma} \label{L02} Let $\mu \in (0, \mu_0 - \gamma s)$ be arbitrary. For $i+j\leq 2$, the initial datum term in \eqref{EQthisijhhssdinthegdsfg:kernel:est} satisfies \begin{align} & \sum_{i+j\leq 2} \norm{ \partial_x^i (y \partial_y)^j \int_0^\infty G(t, y,z)\omega_{0}(z) \,dz}_{X_\mu} \notag \\&\indeq \les \sum_{i+j\leq 2}\lVert \partial_x^i (y\partial_{y})^j \omega_0\rVert_{X_{\mu}} + \sum_{i+j\leq 2} \sum_{\xi} \lVert \xi^i \partial_{y}^j \omega_{0,\xi}\rVert_{L^\infty(y\ge 1+\mu)} \, . \label{EQthisijhhssdinthegdsfg37} \end{align} \end{lemma}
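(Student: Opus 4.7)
The plan is to follow the template of Lemma~\ref{high:y:x} and Lemma~\ref{rem:x}, splitting $G_\xi = \tilde H_\xi + R_\xi$ and, for each piece, decomposing the convolution integral $\int_0^\infty G_\xi(t,y,z)\omega_{0\xi}(z)\,dz$ into a contribution from the analytic region $\{0\le z\le 1+\mu\}$ and from the Sobolev region $\{z\ge 1+\mu\}$. The novelty compared with Lemma~\ref{high:y:x} is twofold: first, we are at $s=0$, so we do not need to pay any $1/\sqrt{t-s}$ factor, which means that the $L^2$-based tail norm can be replaced by an $L^\infty$-based one (since we no longer need a Cauchy--Schwarz step to beat a singularity in $t-s$); second, we must treat all $(i,j)$ with $i+j\leq 2$ at once.

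First, I would reduce conormal derivatives to $z$-derivatives. For any $j\geq 1$ the identities $\partial_y \tilde H_\xi(t,y,z) = -\partial_z \tilde H_\xi(t,y,z)$ and $\partial_y R_\xi(t,y,z)=-\partial_z R_\xi(t,y,z)$ allow us to convert $y\partial_y$ (and $(y\partial_y)^2$) into $z$-derivatives, using the cutoff $\psi(z/y)$ exactly as in \eqref{EQthisijhhssdinthegdsfg:dec:non}. In the near-diagonal piece, the $y$-factor is absorbed using $|y|\le 4|y-z|$ on $\{z\le 3y/4\}$, followed by the weight comparison \eqref{EQthisijhhssdinthegdsfg51}; in the far-diagonal piece, we integrate by parts, which places the $z$-derivatives onto $\omega_{0\xi}$, produces commutator terms supported in the annulus $\{y/2\le z\le 3y/4\}$ (where $w(y)\lesssim w(z)$ by Remark~\ref{R04}(b)), and a boundary contribution at $z=1+\mu$.

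For the analytic region $z\le 1+\mu$, I would verify the three kernel bounds of Remark~\ref{R06} — with $H$ replaced by $\tilde H$, which was already observed to satisfy them, and with $R$ as checked in Lemma~\ref{rem:x}. These together with the transferred $z$-derivatives yield a clean bound by $\sum_{i+j\leq 2}\|\partial_x^i (y\partial_y)^j\omega_0\|_{X_\mu}$. For the Sobolev region $z\ge 1+\mu$, instead of Cauchy--Schwarz, I pull out the $L^\infty$ norm of $\partial_x^i \partial_y^j\omega_{0,\xi}$ on $\{z\ge 1+\mu\}$ and bound the remaining integrand by $\|e^{\ee(z-y)_+|\xi|}\tilde H_\xi(t,y,z)\|_{L^\infty_y L^1_z(z\ge 1+\mu)} + \|e^{\ee(z-y)_+|\xi|}R_\xi(t,y,z)\|_{L^\infty_y L^1_z(z\ge 1+\mu)}\lesssim 1$, the last estimate following from computations essentially identical to \eqref{EQthisijhhssdinthegdsfg08} and \eqref{EQthisijhhssdinthegdsfg50}, with the exponential $e^{-\theta_0 b z/2}$ and Gaussian $e^{-\theta_0 z^2/(4\nu t)}$ providing enough decay for integrability on $[1+\mu,\infty)$ without paying any power of $\mu_0-\mu-\gamma s$. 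Summing in $\xi$ then produces the Sobolev term $\sum_{i+j\leq 2}\sum_{\xi}\|\xi^i\partial_y^j\omega_{0,\xi}\|_{L^\infty(y\ge 1+\mu)}$.

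The main obstacle will be the case $j=2$, which requires transferring two conormal derivatives across the kernel. After one integration by parts one produces a commutator term containing $\psi'(z/y)\partial_z G$ and a boundary term at $z=1+\mu$; the second $y\partial_y$ must then be applied to these residuals without losing regularity of $\omega_0$. The commutator term is harmless because $\psi'$ localizes to $\{y/2\leq z\leq 3y/4\}$, where the weight ratio $w(y)/w(z)$ is bounded and $y\partial_y$ may be absorbed into $z\partial_z$; the boundary term at $z=1+\mu$ lies on the interface between the analytic and Sobolev regions and is controlled by the $L^\infty(y\geq 1+\mu)$ tail norm after pointwise evaluation, making use of the standard trace inequality for $H^1 \hookrightarrow L^\infty$ in one dimension. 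Once these two points are handled, assembling \eqref{EQthisijhhssdinthegdsfg37} is a routine summation over $\xi$ and $(i,j)$.
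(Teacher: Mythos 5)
Your proposal follows essentially the same route as the paper: split $G_\xi$ into heat and residual parts, reuse the $\psi(z/y)$ decomposition and the transfer of conormal derivatives to $z$-derivatives from Lemma~\ref{high:y:x}, and exploit the absence of a $1/\sqrt{t-s}$ singularity to bound the Sobolev-region contribution by pulling out the $L^\infty(z\ge 1+\mu)$ norm of $\partial_z^j\omega_{0,\xi}$ against the unit $L^1_z$ mass of the kernel, which is exactly the computation in \eqref{EQthisijhhssdinthegdsfg56}. The only cosmetic difference is your boundary term at $z=1+\mu$, which never arises if one integrates by parts over all of $[y/2,\infty)$ before splitting the integral at $1+\mu$ (and would in any case be dominated pointwise by the $L^\infty(y\ge 1+\mu)$ tail norm, with no trace inequality needed).
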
 \colb \par {\begin{proof}[Proof of Lemma~\ref{L02}] Let $i+j\leq2$. We recall from \eqref{EQthisijhhssdinthegdsfg30}, \eqref{EQthisijhhssdinthegdsfg31}, and \eqref{EQthisijhhssdinthegdsfg36} that \begin{align} G_\xi(t,y,z) = H_\xi(t,y,z) + H_\xi(t,y,-z) + R_\xi(t,y,z)\, . \label{EQthisijhhssdinthegdsfg42} \end{align} Accordingly, we divide \begin{align} & \left( \partial_x^i (y \partial_y)^j \int_0^\infty G(t, y,z)\omega_{0}(z) \,dz \right)_\xi \notag\\ &\indeq = \int_0^\infty (\ii \xi)^i (y \partial_y)^j H_\xi(t, y,z)\omega_{0,\xi}(z) \,dz + \int_0^\infty (\ii \xi)^i (y \partial_y)^j H_\xi(t, y,-z)\omega_{0,\xi}(z) \,dz\notag\\ &\indeq\indeq+ \int_0^\infty (\ii \xi)^i (y \partial_y)^j R_\xi(t, y,z)\omega_{0,\xi}(z) \,dz \notag\\ &\indeq = J_1 + J_2 + J_3\, . \label{EQthisijhhssdinthegdsfg46} \end{align} We first treat the term $J_1$. Using that \begin{align} (y \partial_y)^j = y^j \partial_y^j + {\bf 1}_{\{ j=2 \}} y\partial_y \notag \end{align} and $y \les 1$, we have, similarly to \eqref{EQthisijhhssdinthegdsfg:dec:non}, \begin{align} |J_1| &\les \int_0^{3y/4} | (y \partial_y)^j H_\xi(t, y, z)| |\xi|^i |\omega_{0,\xi}(z)| \,dz \notag\\ &\indeq + \int_{y/2}^{3y/4} \left( \left|\psi'\left(\frac{z}{y}\right)\right| + \left|\psi''\left(\frac{z}{y}\right)\right| \right) |H_\xi(t, y, z)| |\xi|^i |\omega_{0,\xi}(z)| \,dz \notag \\ &\indeq + \int_{y/2}^{3y/4} \left|\psi'\left(\frac{z}{y}\right)\right| |H_\xi(t, y, z)| |\partial_z \omega_{0,\xi}(z)| \,dz \notag \\ &\indeq + \int_{y/2}^{1+\mu} | H_\xi(t, y, z)| |\partial_z^2 \omega_{0,\xi}(z)| \,dz + \int_{1+\mu}^{\infty} | H_\xi(t, y, z)| |\partial_z^2 \omega_{0,\xi}(z)| \,dz \notag\\ &\indeq + \int_{y/2}^{1+\mu} | H_\xi(t, y, z)| |\partial_z \omega_{0,\xi}(z)| \,dz + \int_{1+\mu}^{\infty} | H_\xi(t, y, z)| |\partial_z \omega_{0,\xi}(z)| \,dz \notag\\ &\indeq +{\bf 1}_{\{j\leq 1\}} \int_{y/2}^{1+\mu} |H_\xi(t, y, z)| |\xi|^i |\partial_z^j \omega_{0,\xi}(z)| \,dz \notag\\ &\indeq +{\bf 1}_{\{j\leq 1\}} \int_{1+\mu}^{\infty} |H_\xi(t, y, z)| |\xi|^i |\partial_z^j \omega_{0,\xi}(z)| \,dz \notag\\ & =J_{11}+J_{12}+J_{13}+J_{14} + J_{15} + J_{16}+J_{17} + J_{18} + J_{19} \, . \label{EQthisijhhssdinthegdsfg47} \end{align} The terms $J_{11}$, $J_{12}$, and $J_{13}$ are bounded in the same way as the term $I_1$ in \eqref{EQthisijhhssdinthegdsfg:dec:non} (see \eqref{EQthisijhhssdinthegdsfg12}--\eqref{EQthisijhhssdinthegdsfg45}), leading to the first term in \eqref{EQthisijhhssdinthegdsfg37}. The terms $J_{14}$, $J_{16}$, and $J_{18}$ are estimated in the same way as the term $I_3$ in \eqref{EQthisijhhssdinthegdsfg:dec:non} (see \eqref{EQthisijhhssdinthegdsfg322}--\eqref{EQthisijhhssdinthegdsfg77}), and are bounded by the first term in \eqref{EQthisijhhssdinthegdsfg37}. It is only the Sobolev contributions $J_{15}$, $J_{17}$, and $J_{19}$ which need to be treated differently than the term $I_4$ in \eqref{EQthisijhhssdinthegdsfg:dec:non}, because here we do not wish to increase the value of $\mu$ to $\tilde \mu$. These Sobolev terms are treated in the same way. For instance, for $J_{15}$ we use \eqref{EQthisijhhssdinthegdsfg:y:z} and obtain \begin{align} e^{\ee(1+\mu-y)_+|\xi|} w(y) |J_{15}| &\les \int_{1+\mu}^{\infty} \frac{1}{\sqrt{\nu t}} e^{\ee(z-y)_+|\xi|} e^{-\frac{(y-z)^2}{4\nu t}} e^{-\frac{1}{2}\nu\xi^2 t} | \partial_z^2 \omega_{0,\xi}(z)| \,dz \notag\\ &\les \int_{1+\mu}^{\infty} \frac{1}{\sqrt{\nu t}} e^{-\frac{(y-z)^2}{8\nu t}} | \partial_z^2 \omega_{0,\xi}(z)| \,dz \les \norm{ \partial_z^2 \omega_{0,\xi}(z)}_{L^\infty(z \geq 1+\mu)} \, . \label{EQthisijhhssdinthegdsfg56} \end{align} Thus, the terms $J_{15}$, $J_{17}$, and $J_{19}$ contribute to the second term on the right side of \eqref{EQthisijhhssdinthegdsfg37}. \par The second kernel in \eqref{EQthisijhhssdinthegdsfg42} is treated the same as the first. Likewise, the third kernel in \eqref{EQthisijhhssdinthegdsfg42} is a function of $y+z$ and we may write the analog of the inequality \eqref{EQthisijhhssdinthegdsfg56} and the proof concludes similarly. \end{proof} \par We conclude this section with the proof of Lemma~\ref{lem:main:X}. \begin{proof}[Proof of Lemma~\ref{lem:main:X}] Given $\mu \in (0,\mu_0-\gamma t)$ and $s\in(0,t)$, recall that $\mu_1$ and $\mu_2$ are given by \eqref{EQthisijhhssdinthegdsfg02} and \eqref{EQthisijhhssdinthegdsfg02b}. By the analyticity recovery for the $X$~norm, cf.~Lemma~\ref{ana:rec:x} below, the bound for the first term on the left side of \eqref{EQthisijhhssdinthegdsfg701} is a direct consequence of the bound for the second term. Indeed, we have \begin{align} &\sum_{i+j= 2} \left\lVert \partial_x^i (y\partial_{y})^j\int_0^\infty G(t-s, y, z)N(s, z) \,dz\right\rVert_{X_\mu} \notag\\ &\qquad \les \frac{1}{\mu_0-\mu - \gamma s} \sum_{i+j= 1} \left\lVert \partial_x^i (y\partial_{y})^j\int_0^\infty G(t-s, y, z)N(s, z) \,dz\right\rVert_{X_{\mu_1}} \, . \notag \end{align} The bound for the second term on the left side of \eqref{EQthisijhhssdinthegdsfg701} follows from Lemma~\ref{high:y:x}, Remark~\ref{R06}, and Lemma~\ref{rem:x}, applied with $\mu = \mu_1$ and $\tilde \mu = \mu_2$. \par Concerning the trace kernel, the estimate for the first term on the left side of inequality \eqref{EQthisijhhssdinthegdsfg703} is a consequence of the bound for the second term in \eqref{EQthisijhhssdinthegdsfg703}, the analyticity recovery for the $X$~norm in Lemma~\ref{ana:rec:x}, and the increase in analyticity domain from $\mu$ to $\mu_1$. The bound for the second term on the left side of \eqref{EQthisijhhssdinthegdsfg703} is a consequence of Lemma~\ref{tra:x} with $\mu = \mu_1$. \par Lastly, the initial datum term is bounded as in Lemma~\ref{L02}, which concludes the proof of the lemma. \end{proof} \par \section{Estimates for the $\YY$ analytic norm} \label{secL} \par \cole \begin{lemma} \label{high:y:x:L} Let $\mu \in (0,\mu_0-\gamma s)$ be arbitrary. For $(i,j)=(0,0),(1,0),(0,1)$, we have \begin{align} \left\lVert \partial_x^i (y\partial_{y})^j \int_0^\infty H(t-s, y, z)N(s, z) \,dz\right\rVert_{\YY_\mu} & \lesssim \lVert \partial_x^i (y\partial_{y})^j N(s)\rVert_{\YY_{\mu}} + \lVert N(s)\rVert_{\YY_{\mu}} + \lVert \partial_x^i \partial_{y}^j N(s)\rVert_{S_{\mu}} \, . \label{EQthisijhhssdinthegdsfg1024} \end{align} \end{lemma}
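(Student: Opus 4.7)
My approach is to mirror the proof of Lemma~\ref{high:y:x}, but now to take $L^1$-in-$y$ norms along the contour family $\partial\Omega_\theta$, $\theta<\mu$, rather than weighted $L^\infty$-in-$y$. Two structural simplifications arise in the $Y$-setting. First, no enlargement of the analyticity radius (no auxiliary $\tilde\mu>\mu$) is required: convolution in $z$ against the heat kernel pairs naturally with $L^1_y$ via Young's inequality, and $(\nu(t-s))^{-1/2} e^{-(y-z)^2/(c\nu(t-s))}$ has a uniform $L^1_y$ norm. Second, the Sobolev tail on $\{z\ge 1+\mu\}$ contributes directly a $\|\partial_x^i \partial_y^j N\|_{S_\mu}$ term with no $(\mu_0-\mu-\gamma s)^{-1/2}$ prefactor, since I can afford a Cauchy--Schwarz against the integrable $z^{-1}$ weight on $[1+\mu,\infty)$.

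For the main case $(i,j)=(0,1)$, I would apply exactly the cutoff decomposition~\eqref{EQthisijhhssdinthegdsfg:dec:non} with $\psi(z/y)$, splitting the integral into $I_1+I_2+I_3+I_4$. On $I_1$ (and identically $I_2$), supported in $0\le z\le 3y/4$, I would use $|y|\le 4|y-z|$ to produce
\begin{align*}
|y\partial_z H_\xi(t-s,y,z)| \lesssim \frac{1}{\sqrt{\nu(t-s)}} e^{-\frac{(y-z)^2}{8\nu(t-s)}} e^{-\nu\xi^2(t-s)},
\end{align*}
shift the exponential weight by $e^{\ee(1+\mu-y)_+|\xi|}\le e^{\ee(1+\mu-z)_+|\xi|}$ (valid for $z\le y$), and apply Young's inequality in $y$; summing in $\xi$ produces a bound by $\|N(s)\|_{\YY_\mu}$. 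For $I_3$, I transfer the derivative to $N$ and use $z\ge y/2$ on the support to convert $y\partial_z N$ into $z\partial_z N$; the shift $e^{\ee(1+\mu-y)_+|\xi|}\le e^{\ee(1+\mu-z)_+|\xi|} e^{\ee(z-y)_+|\xi|}$ combined with~\eqref{EQthisijhhssdinthegdsfg:y:z:new} absorbs the cross-factor into the Gaussian and into $e^{-\nu\xi^2(t-s)/2}$, and then Young's inequality gives $\|I_3\|_{\YY_\mu}\lesssim \|(y\partial_y)N(s)\|_{\YY_\mu}$.

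For the Sobolev tail $I_4 = y\int_{1+\mu}^\infty H_\xi(t-s,y,z)\partial_z N_\xi(s,z)\,dz$, the same weight shift applies, but $\partial_z N$ is controlled only in weighted $L^2$. The key step is a Cauchy--Schwarz estimate
\begin{align*}
\int_{1+\mu}^\infty |\partial_z N_\xi(s,z)|\,dz \lesssim \|z\partial_z N_\xi(s)\|_{L^2(z\ge 1+\mu)},
\end{align*}
followed by Young's inequality in $y$ against $H_\xi$, yielding the clean bound by $\|\partial_y N\|_{S_\mu}$ without any $(\mu_0-\mu-\gamma s)^{-1/2}$ loss. The cases $(i,j)=(0,0)$ and $(1,0)$ reduce to the above: the first uses the simpler three-piece split analogous to~\eqref{EQthisijhhssdinthegdsfg215}, and the second follows by applying the $(0,0)$ estimate with $N$ replaced by $\partial_x N$, since $\partial_x$ commutes with convolution in $z$.

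The main point to verify carefully will be the uniformity of all estimates over the $L^1$-contour family $\{\partial\Omega_\theta : \theta<\mu\}$; however, as in Remark~\ref{R03}, the condition $|\Im y|\le \mu \Re y$ on $\Omega_\mu$ with $\mu\le\mu_0\le 1/10$ makes $|y|$ comparable to $\Re y$, so the pointwise Gaussian and exponential bounds reduce to their real-variable forms, and contour integration introduces only notational adjustments. Finally, exactly as in Remark~\ref{R06}, the argument uses only the $L^1$-convolutional properties of $H_\xi$, so the same proof applies with $H$ replaced by $\tilde H$ (the Neumann heat kernel), which is the version actually needed downstream when combined with the analogous $R_\xi$-estimate.
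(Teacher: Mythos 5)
Your proposal follows essentially the same route as the paper: the identical four-term cutoff decomposition, the bound $|y|\le 4|y-z|$ for $I_1,I_2$, the weight shift \eqref{EQthisijhhssdinthegdsfg:y:z} for $I_3$ and $I_4$, Fubini/Young against the uniform $L^1_y$ mass of the Gaussian, and Cauchy--Schwarz against $1/z$ on $[1+\mu,\infty)$ to land on the $S_\mu$ norm; you also correctly identify the two structural simplifications (no $\tilde\mu$ enlargement, no $(\mu_0-\mu-\gamma s)^{-1/2}$ loss) that distinguish the $\YY$ case from the $X$ case. This matches the paper's proof, including the treatment of the complex contours and the extension to $\tilde H$ via Remark~\ref{R07}.
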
 \colb \par \begin{remark}\label{R07} Similarly to Remark~\ref{R06}, we emphasize that in the proof of Lemma~\ref{high:y:x:L} we only use several properties of the heat kernel $H_\xi(t,y,z)$. Examining the proof below, one may verify that these properties are: The kernel should be either a function of $y+z$ or $y-z$, and it should obey the estimates \begin{align} \norm{\chi_{\{0\leq y \leq1+\mu\}} \chi_{\{0\leq z \leq 3y/4\}} \bigl( | H_\xi(t,y,z) | + | y \partial_y H_\xi(t,y,z)| \bigr)}_{L^1_y L^\infty_z} &\les 1 \label{EQthisijhhssdinthegdsfg:R07:1}\\ \norm{e^{\ee (z-y)_+ |\xi|} H_\xi(t,y,z)}_{L^1_y L^\infty_z} &\les 1 \label{EQthisijhhssdinthegdsfg:R07:2} \, . \end{align} It is direct to check that the kernel $\tilde H_\xi(t,y,z) - H_\xi(t,y,z) = H_\xi(t,y,-z) = \frac{1}{\sqrt{\nu t}}e^{-\frac{(y+z)^2}{4\nu t}} e^{-\nu\xi^2t}$ obeys these two properties. Therefore, the bounds stated in Lemma~\ref{high:y:x:L} hold with $H(t,y,z)$ replaced by the full kernel $\tilde H(t,y,z)$. \end{remark} \par \begin{proof}[Proof of Lemma~\ref{high:y:x:L}] Let $y\in \Omega_{\mu}$. For simplicity, we only work with $y\in{\mathbb R}$; an adjustment for the complex case is straight-forward and leads only to notational complications. \par We start with the proof of \eqref{EQthisijhhssdinthegdsfg1024} in the case $(i,j) = (0,1)$. Let $\psi$ be the cut-off function from the proof of Lemma~\ref{high:y:x}. The first conormal derivative is given as in \eqref{EQthisijhhssdinthegdsfg:dec:non} by \begin{align} &y \partial_y \int_0^\infty H_\xi(t-s, y, z)N_\xi(s, z) \,dz \notag s\\& = - y\int_0^{3y/4} \psi\left(\frac{z}{y}\right) \partial_z H_\xi(t-s, y, z)N_\xi(s, z) \,dz - \int_{y/2}^{3y/4} \psi'\left(\frac{z}{y}\right) H_\xi(t-s, y, z)N_\xi(s, z) \,dz \notag \\&\indeq + y \int_{y/2}^{1+\mu} \left(1-\psi\left(\frac{z}{y}\right)\right) H_\xi(t-s, y, z) \partial_z N_\xi(s, z) \,dz + y \int_{1+\mu}^{\infty} H_\xi(t-s, y, z) \partial_z N_\xi(s, z) \,dz \notag \\& =I_1+I_2+I_3+I_4 \, . \label{EQthisijhhssdinthegdsfg:dec:non:L} \end{align} Using the bounds \eqref{EQthisijhhssdinthegdsfg10}, \eqref{EQthisijhhssdinthegdsfg09}, \eqref{EQthisijhhssdinthegdsfg11}, and \eqref{EQthisijhhssdinthegdsfg06}, we obtain \begin{align} e^{\ee(1+\mu-y)_+|\xi|} |I_1| \les \int_0^{3y/4} \frac{1}{\sqrt{\nu (t-s)}} e^{-\frac{(y-z)^2}{8\nu (t-s)}} e^{-\nu\xi^2(t-s)} e^{\ee(1+\mu-z)_+|\xi|} |N_\xi(s,z)| \,dz \, . \label{EQthisijhhssdinthegdsfg12:L} \end{align} Integrating in $y$, changing the order of integration, and using \begin{align} \norm{\frac{1}{\sqrt{\nu (t-s)}} e^{-\frac{(y-z)^2}{8\nu (t-s)}}}_{L^\infty_z L^1_y} \les 1 \, , \label{EQthisijhhssdinthegdsfg:unit:mass} \end{align} we arrive at \begin{align} \norm{e^{\ee(1+\mu-y)_+|\xi|} I_{1}}_{\SL_\mu} &\lesssim \int_0^{1+\mu} \int_0^{3y/4} \frac{1}{\sqrt{\nu (t-s)}} e^{-\frac{(y-z)^2}{16\nu (t-s)}} e^{\ee(1+\mu-z)_+|\xi|} |N_\xi(s,z)| \,dz dy \notag \\&\les \norm{e^{\ee(1+\mu-z)_+|\xi|} N_\xi(s)}_{\SL_\mu} \, . \llabel{sldkfjg;ls dslksfgj s;lkdfgj ;sldkfjg ;sldkfgj s;ldfkgj s;ldfkgj gfisasdoifaghskcx,.bvnliahglidhgsd gs sdfgh sdfg sldfg sldfgkj slfgj sl;dfgkj sl;dfgjk sldgfkj jiurwe alskjfa;sd fasdfEQthisijhhssdinthegdsfg61} \end{align} Summing over $\xi$ yields the bound \begin{equation} \lVert I_{1}\rVert_{\YY_\mu} \le \lVert N(s)\rVert_{\YY_\mu} \, . \label{EQthisijhhssdinthegdsfg45:L} \end{equation} The term $I_{2}$ in \eqref{EQthisijhhssdinthegdsfg:dec:non:L} is treated in the same way, by using $\norm{\psi'}_{L^\infty} \les 1$, leading to the same upper bound as in \eqref{EQthisijhhssdinthegdsfg45:L}. For the term $I_3$ in \eqref{EQthisijhhssdinthegdsfg:dec:non:L}, we use \eqref{EQthisijhhssdinthegdsfg:y:z}, the fact that $\eps_0$ is small, and the bound \eqref{EQthisijhhssdinthegdsfg:unit:mass}, in order to conclude \begin{align} \lVert I_{3}\rVert_{\YY_\mu}&= \sum_\xi \norm{e^{\ee(1+\mu-y)_+|\xi|} I_{3}}_{\SL_\mu} \notag \\& \lesssim \sum_\xi \norm{ \int_{y/2}^{1+\mu} \frac{1}{\sqrt{\nu (t-s)}}e^{-\frac{(y-z)^2}{8\nu (t-s)}} e^{\ee(1+\mu-z)_+|\xi|} |z\partial_{z}N_\xi(s, z)| \,dz}_{\SL_\mu} \notag \\& \lesssim \sum_\xi \norm{e^{\ee(1+\mu-z)_+|\xi|} |z\partial_{z}N_\xi(s)|}_{\SL_\mu} = \norm{z\partial_{z}N_\xi(s)}_{\YY_\mu} \, . \label{EQthisijhhssdinthegdsfg338:L} \end{align} In order to estimate the term $I_4$ in \eqref{EQthisijhhssdinthegdsfg:dec:non:L} we appeal to \eqref{EQthisijhhssdinthegdsfg:y:z}, use that $\ee$ is chosen sufficiently small, and the bound $y \leq 1+\mu \leq z$, to obtain \begin{align} e^{\ee(1+\mu-y)_+|\xi|} |I_{4}| &\les \int_{1+\mu}^{\infty} \frac{1}{\sqrt{\nu(t-s)}} e^{\ee(z-y)_+|\xi|} e^{-\frac{(y-z)^2}{4\nu (t-s)}} e^{-\frac{1}{2}\nu\xi^2(t-s)} | \partial_z N_\xi(s,z)| \,dz \notag\\ &\les \int_{1+\mu}^{\infty} \frac{e^{-\frac{(y-z)^2}{8\nu (t-s)}}}{\sqrt{\nu(t-s)}} | \partial_z N_\xi(s,z)| \,dz \, . \notag \end{align} Upon integrating in $y$, using \eqref{EQthisijhhssdinthegdsfg:unit:mass}, and summing in $\xi$, the above estimate yields \begin{align} \norm{I_4}_{\YY_\mu} \les \sum_\xi \norm{\partial_z N_\xi (s)}_{L^1(z\geq 1+\mu)} \les \norm{\partial_z N(s)}_{S_\mu} \, . \notag \end{align} This concludes the proof of \eqref{EQthisijhhssdinthegdsfg1024} with $(i,j) = (0,1)$. \par The estimate \eqref{EQthisijhhssdinthegdsfg1024} for $(i,j)=(1,0)$ follows from the bound \eqref{EQthisijhhssdinthegdsfg1024} with $(i,j)=(0,0)$, by applying the estimate to $\partial_x N$ instead of $N$. In order to prove \eqref{EQthisijhhssdinthegdsfg1024} for $(i,j)=(0,0)$, we decompose, as in \eqref{EQthisijhhssdinthegdsfg215}, \begin{align} & \int_0^\infty H_\xi(t-s, y, z)N_\xi(s, z) \,dz \notag \\&\indeq = \int_0^{3y/4} \psi\left(\frac{z}{y}\right)H_\xi(t-s, y, z) N_\xi(s, z) \,dz + \int_{y/2}^{1+\mu} \left(1-\psi\left(\frac{z}{y}\right)\right)H_\xi(t-s, y, z) N_\xi(s, z) \,dz \notag \\&\indeq\indeq + \int_{1+\mu}^\infty H_\xi(t-s, y, z) N_\xi(s, z) \,dz \notag \\&\indeq =J_1+J_2+J_3 \, . \llabel{sldkfjg;ls dslksfgj s;lkdfgj ;sldkfjg ;sldkfgj s;ldfkgj s;ldfkgj gfisasdoifaghskcx,.bvnliahglidhgsd gs sdfgh sdfg sldfg sldfgkj slfgj sl;dfgkj sl;dfgjk sldgfkj jiurwe alskjfa;sd fasdfEQthisijhhssdinthegdsfg215:N} \end{align} Upon inspection of the proof for $(i,j)=(0,1)$, we see that using \eqref{EQthisijhhssdinthegdsfg:unit:mass} we obtain \begin{align} \norm{J_1}_{\YY_{\mu}} + \norm{J_2}_{\YY_{\mu}} \les \norm{N(s)}_{\YY_{\mu}} \, . \notag \end{align} On the other hand, the term $J_3$ is estimated exactly as the term $I_4$ above, and we obtain \begin{align} \norm{J_3}_{\YY_{\mu}} \les \norm{N(s)}_{S_{\mu}} \, . \notag \end{align} This concludes the proof of the lemma. \end{proof} \par Next, we state the inequalities involving the remainder kernel $ R_\xi$. \par \cole \begin{lemma} \label{rem:l} Let $\mu \in (0, \mu_0 - \gamma s)$ be arbitrary. For $(i,j) \in \{ (0,0), (1,0), (0,1) \}$ we have the estimate \begin{align} \left\lVert \partial_x^i (y\partial_y)^j \int_0^\infty R_\xi(t-s, y, z)N_\xi(s, z) \,dz\right\rVert_{\YY_\mu} \lesssim \lVert \partial_x^i (y\partial_y)^j N(s)\rVert_{\YY_{\mu}} + \lVert N(s)\rVert_{\YY_{\mu}} + \lVert \partial_x^i \partial_y^j N(s) \rVert_{S_{\mu}} \, . \label{EQthisijhhssdinthegdsfg244} \end{align} \end{lemma}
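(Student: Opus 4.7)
My plan is to reduce this lemma to Lemma~\ref{high:y:x:L} by verifying, for the remainder kernel $R_\xi$, the two structural properties identified in Remark~\ref{R07}. First, by Lemma~\ref{Green:fun} we have $(\partial_y - \partial_z) R_\xi = 0$, so $R_\xi(t,y,z)$ is a function of $y+z$, which in particular allows us to trade $y$-derivatives for $z$-derivatives in the decomposition \eqref{EQthisijhhssdinthegdsfg:dec:non:L}. It therefore suffices to verify the two bounds
\begin{align}
\norm{\chi_{\{0\leq y \leq1+\mu\}} \chi_{\{0\leq z \leq 3y/4\}} \bigl( |R_\xi(t-s,y,z)| + |y \partial_y R_\xi(t-s,y,z)|\bigr)}_{L^1_y L^\infty_z} &\les 1\,, \notag\\
\norm{e^{\ee (z-y)_+ |\xi|} R_\xi(t-s,y,z)}_{L^1_y L^\infty_z} &\les 1\,, \notag
\end{align}
after which the proof of Lemma~\ref{high:y:x:L} carries over verbatim, yielding the estimate \eqref{EQthisijhhssdinthegdsfg244}.

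For the first bound, I would invoke the pointwise estimate \eqref{EQthisijhhssdinthegdsfg33}, which gives
\begin{align}
|R_\xi| + |y\partial_y R_\xi| \les b e^{-\frac{\theta_0}{2}b(y+z)} + \frac{1}{\sqrt{\nu(t-s)}} e^{-\frac{\theta_0}{2}\frac{(y+z)^2}{\nu(t-s)}} e^{-\frac{\nu\xi^2(t-s)}{8}}\,. \notag
\end{align}
Taking a supremum over $z \in [0,3y/4]$ drops the $z$-dependence, and then integration in $y \in [0,1+\mu]$ yields $\int_0^\infty b e^{-\frac{\theta_0}{2}by}\,dy \les 1$ and $\int_0^\infty \frac{1}{\sqrt{\nu(t-s)}} e^{-\frac{\theta_0 y^2}{2\nu(t-s)}}\,dy \les 1$, establishing the first property.

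For the second bound, the exponential weight $e^{\ee(z-y)_+|\xi|}$ needs to be absorbed into the two terms of \eqref{EQthisijhhssdinthegdsfg33}. This is the main technical point. For the first term, I would use $b\geq |\xi|$ and choose $\ee$ small enough (depending on $\theta_0$) so that $e^{\ee(z-y)_+|\xi|} b e^{-\frac{\theta_0}{2} b(y+z)} \les b e^{-\frac{\theta_0}{4}b(y+z)}$, which integrates in $y$ to $\les e^{-\frac{\theta_0}{4}bz} \les 1$ uniformly in $z$. For the Gaussian part, I would apply \eqref{EQthisijhhssdinthegdsfg:y:z:new} with $a = \ee (z-y)_+/2$ and $c = \theta_0 \nu(t-s)/16$, yielding $e^{\ee(z-y)_+|\xi|} \leq e^{C \ee^2 (z-y)_+^2/(\nu(t-s))} e^{\theta_0 \nu \xi^2 (t-s)/16}$; choosing $\ee$ small this is dominated by $e^{\frac{\theta_0}{4}\frac{(y+z)^2}{\nu(t-s)}} e^{\frac{\nu\xi^2(t-s)}{16}}$, and the remaining Gaussian in $y+z$ integrates to $\les 1$ uniformly in $z$. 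The hard part is precisely this exponential bookkeeping; once both conditions of Remark~\ref{R07} are in hand, the proof concludes by applying the argument of Lemma~\ref{high:y:x:L} to $R_\xi$ in place of $H_\xi$.
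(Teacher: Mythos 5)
Your proposal is correct and follows essentially the same route as the paper: reduce to verifying the two conditions of Remark~\ref{R07} for $R_\xi$ using the pointwise bound \eqref{EQthisijhhssdinthegdsfg33}, absorb the weight $e^{\ee(z-y)_+|\xi|}$ into the $be^{-\theta_0 b(y+z)}$ term via $b\geq|\xi|$ and $\ee$ small, and handle the Gaussian piece by the same absorption used for $H_\xi(t,y,-z)$ (the paper simply cites Remark~\ref{R07} for that piece rather than redoing the computation as you do).
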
 \colb \par \begin{proof}[Proof of Lemma~\ref{rem:l}] In order to establish \eqref{EQthisijhhssdinthegdsfg244}, we only need to verify that the kernel $R_\xi(t,y,z)$ obeys the conditions stated in Remark~\ref{R07}. According to Remark~\ref{R01}, in order to obtain \eqref{EQthisijhhssdinthegdsfg:R07:1}--\eqref{EQthisijhhssdinthegdsfg:R07:2}, we only need to prove that \begin{align} \norm{\chi_{\{0\leq y \leq1+\mu\}} \chi_{\{0\leq z \leq 3y/4\}} \bigl( be^{-\frac{1}{2}\theta_0b(y+z)} \bigr)}_{L^1_y L^\infty_z} &\les 1 \label{EQthisijhhssdinthegdsfg23}\\ \norm{e^{\ee (z-y)_+ |\xi|} be^{-\theta_0b(y+z)} }_{L^1_y L^\infty_z} &\les 1 \label{EQthisijhhssdinthegdsfg34} \, . \end{align} Indeed, the second term in the upper bound \eqref{EQthisijhhssdinthegdsfg33} for the residual kernel is treated in exactly the same way as $H_\xi(t,y,-z)$, but replacing $\frac{1}{4}$ with $\frac{\theta_0}{2}$, and this term was addressed in Remark~\ref{R07}. \par In order to establish \eqref{EQthisijhhssdinthegdsfg23}, let $y\in[0,1+\mu]$ and $z\in[0,3y/4]$. Then \begin{align} & \nnorm{be^{-\frac12 \theta_0 b (y+z)}}_{L^1_y} \les	 \nnorm{be^{-\frac12 \theta_0 b y}}_{L^1_y} \les 1 \, , \llabel{sldkfjg;ls dslksfgj s;lkdfgj ;sldkfjg ;sldkfgj s;ldfkgj s;ldfkgj gfisasdoifaghskcx,.bvnliahglidhgsd gs sdfgh sdfg sldfg sldfgkj slfgj sl;dfgkj sl;dfgjk sldgfkj jiurwe alskjfa;sd fasdfEQthisijhhssdinthegdsfg05} \end{align} and \eqref{EQthisijhhssdinthegdsfg23} follows. For \eqref{EQthisijhhssdinthegdsfg34}, let $\epsilon_0\leq \theta_0$, and observe that $ e^{\ee (z-y)_+ |\xi|} b e^{-\theta_0 b (z+y)} \les be^{- \theta_0 b y} $. The inequality \eqref{EQthisijhhssdinthegdsfg34} then follows upon integration in $y$. \end{proof} \par Next, we consider the $Y$ norm estimate for the trace kernel contribution to \eqref{EQthisijhhssdinthegdsfg:kernel:est}. \cole \begin{lemma} \label{L18} Let $\mu \in (0, \mu_0 - \gamma s)$ be arbitrary. For $0\leq i+j\leq 1$, we have the inequality \begin{align} \left\lVert \partial_x^i (y\partial_{y})^{j} G(t-s, y,0)\partial_{z}\Delta^{-1}N_\xi(s, z)|_{z=0}\right\rVert_{\YY_\mu} \lesssim \lVert \partial_x^i N(s)\rVert_{\YY_{\mu}} + \lVert \partial_x^i N \rVert_{S_{\mu}} \, . \label{EQthisijhhssdinthegdsfg270} \end{align} \end{lemma}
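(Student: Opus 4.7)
The plan is to follow the same three-step strategy used for the $X$ norm in Lemma~\ref{tra:x}: split $G_\xi(t-s,y,0)$ into the heat-trace and residual-trace parts $T_1 + T_2$ as in \eqref{EQthisijhhssdinthegdsfg167}; use the explicit representation
\[
(\partial_z \Delta^{-1} N_\xi(s,z))\bigl|_{z=0} = -\int_0^\infty e^{-|\xi|z} N_\xi(s,z)\,dz,
\]
and split this $z$-integral at $z = 1+\mu$ into an analytic piece $I_1$ and a Sobolev piece $I_2$. This produces four product terms $T_k I_\ell$ to control in $Y_\mu$, after applying the conormal derivative $y\partial_y$ or the tangential derivative $\partial_x$.

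The crucial distinction from Lemma~\ref{tra:x} is that here we integrate in $y$ rather than take a supremum, and the trace kernels satisfy the $\nu$-uniform bounds $\|T_1\|_{L^1_y}, \|T_2\|_{L^1_y} \les 1$, with analogous bounds for $y\partial_y T_1$ and $y\partial_y T_2$ in view of \eqref{EQthisijhhssdinthegdsfg169}--\eqref{EQthisijhhssdinthegdsfg169b}. Consequently, none of the four product terms should produce the singular factor $(t-s)^{-1/2}$ that appears in \eqref{EQthisijhhssdinthegdsfg171}, which explains its absence on the right side of \eqref{EQthisijhhssdinthegdsfg270}. Likewise, no $X_\mu$ norm will appear on the right, because the $L^1_y$ bound on the kernel pairs naturally with the $L^1$-based $Y$ norm on $N$.

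For the analytic-region term $T_k I_1$, I would use the exponential-splitting identity $e^{\ee(1+\mu-y)_+|\xi|} \leq e^{\ee(1+\mu-z)_+|\xi|} e^{\ee(z-y)_+|\xi|}$ together with $e^{\ee(z-y)_+|\xi|} e^{-|\xi|z} \leq 1$ (valid for $\ee \leq 1$ since $(z-y)_+ \leq z$ when $y \geq 0$) to transfer the analytic weight from $y$ to $z$. Then Fubini and the $L^1_y$ bound on $T_k$ reduce the estimate to $\|\partial_x^i N(s)\|_{Y_\mu}$. For the Sobolev-region term $T_k I_2$ (with $z \geq 1+\mu \geq y$), the product $e^{\ee(1+\mu-y)_+|\xi|} e^{-|\xi|z}$ is bounded by one; Cauchy--Schwarz in $z$ together with $\int_{1+\mu}^\infty z^{-2}\,dz \les 1$ then produces a bound by $\sum_\xi \|z N_\xi(s)\|_{L^2(z \geq 1+\mu)} = \|\partial_x^i N\|_{S_\mu}$. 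For the residual trace $T_2$, the decomposition \eqref{EQthisijhhssdinthegdsfg33} gives a Gaussian piece treated exactly like $T_1$, and a boundary-layer piece $b e^{-\theta_0 b y/2}$ which, upon absorbing $e^{\ee(z-y)_+|\xi|}$ via $\ee \leq \theta_0$ as in Lemma~\ref{rem:l}, integrates to $O(1)$ in $y$ thanks to $\|b e^{-\theta_0 b y/4}\|_{L^1_y} \les 1$.

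The conormal case $j=1$ reduces directly to the base case, since $y\partial_y T_1$ and $y\partial_y T_2$ obey the same $L^1_y$ bounds as the kernels themselves. The tangential case $i=1$ is immediate because $\partial_x$ acts as multiplication by $i\xi$ on the Fourier side and commutes with the representation of $\partial_z \Delta^{-1}$. The only nontrivial piece of bookkeeping will be the matching of exponential weights in $T_2 I_1$, where three competing exponentials (the analytic weight $e^{\ee(1+\mu-y)_+|\xi|}$, the boundary-layer factor $b e^{-\theta_0 b y/2}$ with $b = |\xi| + \nu^{-1/2}$, and the Poisson decay $e^{-|\xi|z}$) must be simultaneously absorbed by choosing $\ee$ small relative to $\theta_0$ and exploiting $\sqrt{\nu}\, b \les 1 + |\xi|$; this is the mechanism by which the boundary-layer contribution is shown to be integrable in $y$ uniformly in $\nu$ and $\xi$.
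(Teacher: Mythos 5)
Your proposal is correct and follows essentially the same route as the paper: the representation $(\partial_z\Delta^{-1}N_\xi)|_{z=0}=-\int_0^\infty e^{-|\xi|z}N_\xi\,dz$, the split of the $z$-integral at $1+\mu$, the transfer of the analytic weight via $e^{\ee(z-y)_+|\xi|}e^{-|\xi|z}\le 1$, Cauchy--Schwarz for the tail, and the reduction of the $i+j=1$ cases to the base case. The only difference is cosmetic: the paper observes that the decomposition $G_\xi(t-s,y,0)=T_1+T_2$ is unnecessary here, since the sole property used is $\norm{G_\xi(t-s,\cdot,0)}_{L^1_y}\les 1$ (which follows directly from \eqref{EQthisijhhssdinthegdsfg167} and \eqref{EQthisijhhssdinthegdsfg169b}), so your separate bookkeeping for the boundary-layer factor $be^{-\theta_0 by/2}$, while valid, is not needed.
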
 \colb \par \begin{proof}[Proof of Lemma~\ref{L18}] First, we note that the case $(i,j) = (1,0)$ follows from the bound \eqref{EQthisijhhssdinthegdsfg270} with $(i,j)=(0,0)$, because the $\partial_x$ derivative commutes with the operator $G(t-s, y,0)\partial_{z}\Delta^{-1}|_{z=0}$ (see also the formula \eqref{EQthisijhhssdinthegdsfg265} below). Second, we emphasize that the case $(i,j) = (0,1)$ is treated in the same way as the case $(i,j) = (0,0)$, because the conormal derivative $y\partial_y$ of $G(t-s,y,0)$ obeys the same bounds as $G(t-s,y,0)$ itself (see the bounds~\eqref{EQthisijhhssdinthegdsfg169}--\eqref{EQthisijhhssdinthegdsfg169b} above). Therefore, we only need to consider the case $(i,j) = (0,0)$. As opposed to the proof of Lemma~\ref{tra:x}, we do not split the kernel $G_\xi(t-s,y,0)$ into two parts. The only property of the kernel which is used in this estimate is \begin{align} \norm{G_\xi(t-s,y,0)}_{L^1_y} \les 1 \, , \label{EQthisijhhssdinthegdsfg:mass:1} \end{align} which follows directly from \eqref{EQthisijhhssdinthegdsfg167} and \eqref{EQthisijhhssdinthegdsfg169b}. \par Using \eqref{EQthisijhhssdinthegdsfg229}, we have \begin{align} \partial_{z}\Delta^{-1}N_\xi(s, z)|_{z=0} &= - \int_0^\infty e^{-|\xi| z}N_\xi(s, z) \,dz \label{EQthisijhhssdinthegdsfg265} \end{align} and thus, since $\ee$ may be taken sufficiently small, we obtain \begin{align} &\left | e^{\ee(1+\mu-y)_+|\xi|}G_\xi(t-s,y,0) \partial_{z}\Delta^{-1}N_\xi(s, z)|_{z=0} \right| \notag \\&\indeq\les G_\xi(t-s,y,0) \int_0^{\infty} e^{-|\xi| z}e^{\ee(z-y)_+|\xi|}e^{\ee(1+\mu-z)_+|\xi|}|N_\xi(s, z)| \,dz \notag \\&\indeq\lesssim G_\xi(t-s,y,0) \int_0^{1+\mu} e^{\ee(1+\mu-z)_+|\xi|}|N_\xi(s, z)| \,dz + G_\xi(t-s,y,0) \int_{1+\mu}^\infty |N_\xi(s, z)| \,dz \, . \llabel{sldkfjg;ls dslksfgj s;lkdfgj ;sldkfjg ;sldkfgj s;ldfkgj s;ldfkgj gfisasdoifaghskcx,.bvnliahglidhgsd gs sdfgh sdfg sldfg sldfgkj slfgj sl;dfgkj sl;dfgjk sldgfkj jiurwe alskjfa;sd fasdfEQthisijhhssdinthegdsfg266} \end{align} Using \eqref{EQthisijhhssdinthegdsfg:mass:1} and summing over $\xi$, we arrive at \begin{align} \left\lVert G_\xi(t-s,y,0) \partial_{z}\Delta^{-1}N_\xi(s, z)|_{z=0} \right\rVert_{\YY_\mu} \lesssim \lVert N(s)\rVert_{\YY_{\mu}} + \norm{N(s)}_{S_\mu} \, , \llabel{sldkfjg;ls dslksfgj s;lkdfgj ;sldkfjg ;sldkfgj s;ldfkgj s;ldfkgj gfisasdoifaghskcx,.bvnliahglidhgsd gs sdfgh sdfg sldfg sldfgkj slfgj sl;dfgkj sl;dfgjk sldgfkj jiurwe alskjfa;sd fasdfEQthisijhhssdinthegdsfg267} \end{align} which concludes the proof of the lemma. \end{proof} \par Next, we provide an inequality corresponding to the initial datum. \par \cole \begin{lemma} \label{L03} Let $\mu \in (0,\mu_0 - \gamma t)$. For $i+j\leq 2$, the initial datum term in \eqref{EQthisijhhssdinthegdsfg:kernel:est} satisfies \begin{align} & \sum_{i+j\leq 2} \norm{ \partial_x^i (y \partial_y)^j \int_0^\infty G(t, y,z)\omega_{0}(z) \,dz}_{\YY_\mu} \notag \\&\indeq \les \sum_{i+j\leq 2}\lVert \partial_x^i (y\partial_{y})^j \omega_0\rVert_{\YY_{\mu}} + \sum_{i+j\leq 2} \sum_{\xi} \lVert \xi^i \partial_{y}^j \omega_{0,\xi}\rVert_{L^1(y\ge 1+\mu)} \, . \label{EQthisijhhssdinthegdsfg40} \end{align} \end{lemma}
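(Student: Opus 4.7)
The plan is to mirror the proof of Lemma~\ref{L02}, replacing the $L^\infty_y L^1_z$ based estimates used there by the $L^1_y L^\infty_z$ bounds already extracted in Lemma~\ref{high:y:x:L}, Remark~\ref{R07}, and Lemma~\ref{rem:l}. As in \eqref{EQthisijhhssdinthegdsfg42}, decompose
\begin{align}
G_\xi(t,y,z) = H_\xi(t,y,z) + H_\xi(t,y,-z) + R_\xi(t,y,z), \notag
\end{align}
and treat the three resulting convolutions separately. By Remark~\ref{R07} the reflected kernel $H_\xi(t,y,-z)$ enjoys the same $L^1_y L^\infty_z$ estimates as $H_\xi(t,y,z)$, and by Lemma~\ref{rem:l} (via \eqref{EQthisijhhssdinthegdsfg23}--\eqref{EQthisijhhssdinthegdsfg34}) the residual kernel $R_\xi$ does as well; hence all three contributions are estimated in the same manner, and only the case of $H_\xi(t,y,z)$ needs to be spelled out.

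Fix $i+j\le 2$ and act by $\partial_x^i (y\partial_y)^j$ on the convolution. I will then insert the cutoff $\psi(z/y)$ from the proofs of Lemma~\ref{high:y:x} and Lemma~\ref{high:y:x:L}, and integrate by parts in $z$ using $\partial_y H_\xi(t,y,z) = -\partial_z H_\xi(t,y,z)$ to transfer the conormal derivatives from the kernel onto the initial datum. This produces, as in \eqref{EQthisijhhssdinthegdsfg47}, a sum of three kinds of terms:
\begin{align}
& \int_0^{3y/4}\!\psi\!\left(\tfrac{z}{y}\right) (\text{up to } (y\partial_y)^j H_\xi)\, \xi^i \partial_z^k\omega_{0,\xi}(z)\,dz \notag\\
&\indeq + \int_{y/2}^{1+\mu}\!\Bigl(1-\psi\!\left(\tfrac{z}{y}\right)\Bigr) H_\xi(t,y,z)\, (\partial_x^i \partial_z^j \omega_{0,\xi})(z)\,dz + \int_{1+\mu}^{\infty}\! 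H_\xi(t,y,z)\, (\partial_x^i \partial_z^j \omega_{0,\xi})(z)\,dz, \notag
\end{align}
plus boundary/cutoff derivative terms supported on $\{y/2\le z\le 3y/4\}$. For the first group, $y\sim z$ fails but $z\le 3y/4$, so I use \eqref{EQthisijhhssdinthegdsfg:R07:1} together with $\sqrt{\nu}\les w$ and \eqref{EQthisijhhssdinthegdsfg:y:z} with $\eps_0$ small to absorb the exponential weight, then integrate in $y$ against the Gaussian via \eqref{EQthisijhhssdinthegdsfg:unit:mass}; summing over $\xi$ gives a bound by the first term on the right of \eqref{EQthisijhhssdinthegdsfg40}. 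The second (middle) group is handled exactly as $I_3$ in \eqref{EQthisijhhssdinthegdsfg338:L}: since $z\ge y/2$ one converts the factor of $y^j$ arising from the $(y\partial_y)^j$ integration by parts into $z^j$ (with a harmless constant), obtaining the same $\YY_\mu$ upper bound.

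The only piece requiring a different treatment than Lemma~\ref{high:y:x:L} is the far (Sobolev) region $\{z\ge 1+\mu\}$: here I must keep the analyticity radius equal to $\mu$ on both sides of \eqref{EQthisijhhssdinthegdsfg40} and hence cannot spill from $\mu$ into a larger $\tilde\mu$ to pay for a derivative loss. This is where the hypothesis on the right-hand side naturally produces the $L^1(y\ge 1+\mu)$ tail norm: on $\{z\ge 1+\mu\}$ we have $z\ge y$ (for $y\in\Omega_\mu$), so $e^{\eps_0(1+\mu-y)_+|\xi|}$ is absorbed by \eqref{EQthisijhhssdinthegdsfg:y:z} and the Gaussian/exponential tail of the kernel against $1/\sqrt{\nu t}$ produces, after applying \eqref{EQthisijhhssdinthegdsfg:R07:2} and integrating in $y$, a direct bound by $\sum_\xi \|\xi^i\partial_y^j\omega_{0,\xi}\|_{L^1(z\ge 1+\mu)}$. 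Summing the three contributions, over the three kernel pieces, and over $i+j\le 2$, yields~\eqref{EQthisijhhssdinthegdsfg40}. The main (minor) technical nuisance is bookkeeping of the $j=2$ case, where an extra factor of $y$ appears after integration by parts; but since $y\le 1+\mu_0\les 1$ throughout $\Omega_\mu$, this factor is absorbed into the implicit constant.
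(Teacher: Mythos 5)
Your proposal is correct and follows essentially the same route as the paper: the decomposition \eqref{EQthisijhhssdinthegdsfg42} combined with the cutoff decomposition \eqref{EQthisijhhssdinthegdsfg47}, the near-field and middle terms handled as $I_1$ and $I_3$ in the proof of Lemma~\ref{high:y:x:L}, and the tail $\{z\geq 1+\mu\}$ handled via \eqref{EQthisijhhssdinthegdsfg:y:z}, Fubini, and \eqref{EQthisijhhssdinthegdsfg:unit:mass} without enlarging $\mu$, which is exactly how the $L^1(y\geq 1+\mu)$ norm arises. The only cosmetic quibble is that your appeal to $\sqrt{\nu}\les w$ is unnecessary here, since the $\YY_\mu$ norm carries no weight $w$.
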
 \colb \par {\begin{proof}[Proof of Lemma~\ref{L03}] Let $i+j\leq2$. Then we have the decomposition of the kernel \eqref{EQthisijhhssdinthegdsfg42}. We start with the first kernel in \eqref{EQthisijhhssdinthegdsfg42} and consider the inequality \eqref{EQthisijhhssdinthegdsfg47}, where $J_1$ is as in \eqref{EQthisijhhssdinthegdsfg46}. \par Now, the terms $J_{11}$, $J_{12}$, and $J_{13}$ are bounded the same as the term $I_1$ in \eqref{EQthisijhhssdinthegdsfg:dec:non:L} (see \eqref{EQthisijhhssdinthegdsfg12:L}--\eqref{EQthisijhhssdinthegdsfg45:L}), giving the first term in \eqref{EQthisijhhssdinthegdsfg40}. The terms $J_{14}$, $J_{16}$, and $J_{18}$ are estimated in the same way as the term $I_3$ in \eqref{EQthisijhhssdinthegdsfg:dec:non:L}, cf.~\eqref{EQthisijhhssdinthegdsfg338:L}, and are bounded by the first term in \eqref{EQthisijhhssdinthegdsfg37}. It remains to consider the Sobolev contributions $J_{15}$, $J_{17}$, and $J_{19}$. \par For $J_{15}$ we use \eqref{EQthisijhhssdinthegdsfg:y:z} and write \begin{align} e^{\ee(1+\mu-y)_+|\xi|} |J_{15}| &\les \int_{1+\mu}^{\infty} \frac{1}{\sqrt{\nu(t-s)}} e^{\ee(z-y)_+|\xi|} e^{-\frac{(y-z)^2}{4\nu (t-s)}} e^{-\frac{1}{2}\nu\xi^2(t-s)} | \partial_z^2 \omega_{0,\xi}(z)| \,dz \notag\\ &\les \int_{1+\mu}^{\infty} \frac{1}{\sqrt{\nu(t-s)}} e^{-\frac{(y-z)^2}{8\nu (t-s)}} | \partial_z^2 \omega_{0,\xi}(z)| \,dz \, . \notag \end{align} Upon integrating in $y$, using Fubini, the estimate \eqref{EQthisijhhssdinthegdsfg:unit:mass}, and summing in $\xi$, we obtain \begin{align} \norm{J_{15}}_{Y_\mu} \les \sum_\xi \norm{\partial_z^2 \omega_{0,\xi}}_{L^1(z\geq 1+\mu)} \,. \notag \end{align} With a similar treatment of $J_{17}$ and $J_{19}$ we obtain the second term on the right side of \eqref{EQthisijhhssdinthegdsfg40}. \par Since the other kernels in \eqref{EQthisijhhssdinthegdsfg42} are treated completely analogously, the proof is concluded. \end{proof} \par \begin{proof}[Proof of Lemma~\ref{lem:main:Y}] By increasing the analyticity domain from $\mu$ to $\mu_1$, which is defined in \eqref{EQthisijhhssdinthegdsfg02}, and using the analyticity recovery for the $Y$~norm in Lemma~\ref{ana:rec:l}, we obtain \begin{align} &\sum_{i+j= 2} \left\lVert \partial_x^i (y\partial_{y})^j\int_0^\infty G(t-s, y, z)N(s, z) \,dz\right\rVert_{\YY_\mu} \notag\\ &\qquad \les \frac{1}{\mu_0-\mu-\gamma s} \sum_{i+j= 1} \left\lVert \partial_x^i (y\partial_{y})^j\int_0^\infty G(t-s, y, z)N(s, z) \,dz\right\rVert_{\YY_{\mu_1}} \, . \notag \end{align} Therefore, the bound for the first term on the left of \eqref{EQthisijhhssdinthegdsfg706} is a direct consequence of the estimate for the second term in \eqref{EQthisijhhssdinthegdsfg706}. The bound for the second term on the left side of \eqref{EQthisijhhssdinthegdsfg706} follows from Lemma~\ref{high:y:x:L}, Remark~\ref{R07}, and Lemma~\ref{rem:l}, with $\mu$ replaced by $\mu_1 \in (0,\mu_0-\gamma s)$. \par Similarly, using analytic recovery for the $Y$ norm and increasing the analytic domain from $\mu$ to $\mu_1$, we see that the bound for the first term on the left side of \eqref{EQthisijhhssdinthegdsfg707} is a direct consequence of the estimate for the second term. For this later term, the estimate is established in Lemma~\ref{L18}, with $\mu$ replaced by $\mu_1$. Lastly, the bound \eqref{EQthisijhhssdinthegdsfg708} is proven in Lemma~\ref{L03}, concluding the proof of the lemma. \end{proof} \section{Estimates for the nonlinearity} \label{sec-N} In this section we provide estimates for the nonlinear term \begin{align} N_\xi=(u\cdot\nabla\omega)_\xi= (u_1\partial_{x}\omega)_\xi+\left(\frac{u_2}{y}y\partial_{y}\omega\right)_\xi \label{EQthisijhhssdinthegdsfg129} \end{align} and its $\partial_x^i (y\partial_y)^j$ derivatives, with $i+j\leq 1$, in the $X_\mu$, $\YY_\mu$, and $S_\mu$ norms. We first recall a representation formula of the velocity field in terms of the vorticity. \cole \begin{lemma}[Lemma~2.4 in~\cite{Maekawa14}] \label{bio:sav} The velocity for the system~\eqref{EQthisijhhssdinthegdsfg:vot}--\eqref{EQthisijhhssdinthegdsfg:bdry:vot} is given by \begin{align} u_{1,\xi}(y) =& \frac{1}{2}\left(-\int_0^y e^{-|\xi|(y-z)}(1-e^{-2|\xi| z})\omega_\xi(z) \,dz + \int_y^\infty e^{-|\xi|(z-y)}(1+e^{-2|\xi| y})\omega_\xi(z) \,dz\right) \label{EQthisijhhssdinthegdsfg123} \end{align} and \begin{align} u_{2,\xi}(y) =& \frac{-\ii \xi}{2|\xi|} \left(\int_0^y e^{-|\xi|(y-z)}(1-e^{-2|\xi| z})\omega_\xi(z) \,dz + \int_y^\infty e^{-|\xi|(z-y)}(1-e^{-2|\xi| y})\omega_\xi(z) \,dz\right) \, , \label{EQthisijhhssdinthegdsfg124} \end{align} where $\ii$ is the imaginary unit. \end{lemma}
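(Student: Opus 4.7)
The plan is to derive the integral formulas by passing to a stream function and explicitly inverting the resulting one-dimensional Poisson operator in Fourier space. Since $u$ is divergence-free in the half-space, I would write $u=\nabla^\perp\psi$; the slip boundary condition $u_2|_{y=0}=0$ means $\partial_x\psi|_{y=0}=0$, so the gauge choice $\psi|_{y=0}=0$ is available, and then $\omega=\nabla^\perp\cdot u=\Delta\psi$. The problem is thus reduced to solving $\Delta\psi=\omega$ on $\HH$ with Dirichlet data and appropriate decay at infinity.

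Taking the Fourier transform in $x$ at frequency $\xi\in\ZZ\setminus\{0\}$ converts this into the ODE $(\partial_y^2-\xi^2)\psi_\xi=\omega_\xi$ on $(0,\infty)$ with $\psi_\xi(0)=0$ and $\psi_\xi(y)\to 0$ as $y\to\infty$. The two fundamental solutions are $e^{\pm|\xi|y}$; selecting the decaying one at infinity, enforcing the Dirichlet condition at $y=0$ by reflection, and matching with the standard unit jump of $\partial_y G_\xi$ at $y=z$ yields
\begin{equation*}
G_\xi(y,z) = -\frac{1}{2|\xi|}\bigl(e^{-|\xi||y-z|}-e^{-|\xi|(y+z)}\bigr),
\end{equation*}
so that $\psi_\xi(y)=\int_0^\infty G_\xi(y,z)\omega_\xi(z)\,dz$.

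From here I would compute $u_{2,\xi}=i\xi\,\psi_\xi$ and $u_{1,\xi}=-\partial_y\psi_\xi$. For $u_{2,\xi}$ the formula \eqref{EQthisijhhssdinthegdsfg124} follows immediately from the expression for $\psi_\xi$, after splitting the integral at $z=y$ and recognizing the algebraic identity $e^{-|\xi||y-z|}-e^{-|\xi|(y+z)} = e^{-|\xi|(y-z)}(1-e^{-2|\xi|z})$ for $0\le z\le y$ and $=e^{-|\xi|(z-y)}(1-e^{-2|\xi|y})$ for $z\ge y$, together with the prefactor $-i\xi/(2|\xi|)$. For $u_{1,\xi}$ I would split the integral analogously and differentiate under each piece; continuity of $G_\xi$ across $y=z$ makes the boundary contributions cancel, leaving only $\partial_y e^{-|\xi||y-z|}=\mp|\xi|e^{-|\xi||y-z|}$ on the two subintervals. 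Reassembling and using the companion identity $e^{-|\xi|(z-y)}+e^{-|\xi|(y+z)} = e^{-|\xi|(z-y)}(1+e^{-2|\xi|y})$ then produces precisely \eqref{EQthisijhhssdinthegdsfg123}.

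The only delicate bookkeeping is tracking the signs from $\partial_y|y-z|$ across the diagonal $y=z$ and distributing the reflection term $-e^{-|\xi|(y+z)}$ into the two pieces; no genuine analytical obstacle arises. For the zero mode $\xi=0$, the ODE $\psi_0''=\omega_0$ is integrated directly, or else the mean-zero convention $\int_\TT\omega\,dx=0$ (preserved by the equation) eliminates this frequency entirely, which is the case relevant for the rest of the paper.
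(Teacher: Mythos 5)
Your derivation is correct: the paper states this lemma as a quotation of Lemma~2.4 of Maekawa's work and gives no proof of its own, and your route — solving $(\partial_y^2-\xi^2)\psi_\xi=\omega_\xi$ with the Dirichlet/decay Green's function $-\frac{1}{2|\xi|}\bigl(e^{-|\xi||y-z|}-e^{-|\xi|(y+z)}\bigr)$ and then setting $u_{1,\xi}=-\partial_y\psi_\xi$, $u_{2,\xi}=\ii\xi\psi_\xi$ — is exactly the standard argument behind that cited result, and the algebra reproduces \eqref{EQthisijhhssdinthegdsfg123}--\eqref{EQthisijhhssdinthegdsfg124}. Note only that this construction enforces just the slip condition $u_{2,\xi}(0)=0$; the vanishing of $u_{1,\xi}(0)=\int_0^\infty e^{-|\xi|z}\omega_\xi(z)\,dz$ is not part of the lemma and, as the paper remarks immediately after it, is propagated in time by the vorticity boundary condition rather than by the Biot--Savart law itself.
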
 \colb \par As in Remark~\ref{R03} above, the Biot-Savart law of Lemma~\ref{bio:sav} also holds for $y$ in the complex domain $\Omega_\mu \cup [1+\mu,\infty)$. If $y \in \partial \Omega_\theta$ for some $\theta \in [0,\mu)$, and say $\Im y \geq 0$, then the integration from $0$ to $y$ in \eqref{EQthisijhhssdinthegdsfg123}--\eqref{EQthisijhhssdinthegdsfg124} is an integration over the complex line $\partial \Omega_\theta \cap \{z\colon \Im z \geq 0, \Re z \leq \Re y \}$, while the integration from $y$ to $\infty$ is an integration over $( \partial \Omega_\theta \cap \{z\colon \Im z \geq 0, \Re y \leq \Re z \leq 1+\theta \}) \cup [1+\theta, \infty)$. \par Moreover, we emphasize here that while \eqref{EQthisijhhssdinthegdsfg124} immediately implies the boundary condition $u_{2,\xi}(0) = 0$, from \eqref{EQthisijhhssdinthegdsfg123} it just follows that $u_{1,\xi}(0) = \int_0^\infty e^{-|\xi| z} \omega_\xi(z) dz$. To see that this integral vanishes, one has to use that it vanishes at time $t=0$, and that its time derivative is given using the vorticity boundary condition \eqref{EQthisijhhssdinthegdsfg:Stokes} as $\partial_t u_{1,\xi}(0) = (-\partial_y \Delta_\xi^{-1}(u\cdot \nabla \omega)_\xi)|_{y=0} - \int_0^\infty e^{-|\xi| z} (u\cdot \nabla \omega)_\xi(z) dz = 0$. In the last equality we have used explicitly that the kernel of the operator $(-\partial_y \Delta_\xi^{-1})|_{y=0}$ is given by $e^{-|\xi| z}$. Thus, \eqref{EQthisijhhssdinthegdsfg:Stokes} ensures that $u_{1,\xi}(0) = 0$ is maintained by the evolution. \par The main estimate concerning the $X_\mu$ norm is the following. \cole \begin{lemma} \label{L01} Let $\mu \in (0, \mu_0 - \gamma s)$ be arbitrary. We have the inequalities \begin{align} \lVert N(s)\rVert_{X_{\mu}} \lesssim \sum_{i\leq 1} \left(\lVert \partial_{x}^i\omega\rVert_{\YY_{\mu}}+\lVert \partial_{x}^i\omega\rVert_{S_{\mu}}\right) \sum_{i+j=1}\lVert \partial_{x}^i(y\partial_{y})^j\omega\rVert_{X_{\mu}} \label{EQthisijhhssdinthegdsfg128} \end{align} and \begin{align} \sum_{i+j=1}\lVert \partial_{x}^{i}(y\partial_{y})^jN(s)\rVert_{X_{\mu}} & \lesssim \biggl( \lVert \omega\rVert_{X_{\mu}} + \sum_{1\leq i\leq 2}\bigl( \lVert \partial_{x}^{i}\omega\rVert_{\YY_{\mu}} +\lVert \partial_{x}^{i}\omega\rVert_{S_{\mu}} \bigr) \biggr) \sum_{i+j=1}\lVert \partial_{x}^i(y\partial_{y})^j\omega\rVert_{X_{\mu}} \notag \\& \indeq + \sum_{i\leq 1} \left(\lVert \partial_{x}^i\omega\rVert_{\YY_{\mu}} +\lVert \partial_{x}^i\omega\rVert_{S_{\mu}}\right) \sum_{i+j=2}\lVert \partial_{x}^i(y\partial_{y})^j\omega\rVert_{X_{\mu}} \, . \label{EQthisijhhssdinthegdsfg127} \end{align} \end{lemma}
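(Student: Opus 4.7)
The strategy is to write each piece of $N = u_1 \partial_x \omega + (u_2/y)(y\partial_y \omega)$ as a convolution in the tangential Fourier variable, $(fg)_\xi = \sum_\eta f_{\xi-\eta} g_\eta$, and then to split the $X_\mu$ weight accordingly. Using $|\xi| \leq |\eta| + |\xi-\eta|$ to factor the exponential $e^{\epsilon_0(1+\mu-y)_+|\xi|}$, I place the $w(y)$ weight on the vorticity-derivative factor $g$ and leave the velocity factor $f$ to be estimated in a weighted $L^\infty_y$ norm, uniformly in its Fourier mode. This yields the clean bound
\begin{align*}
\|fg\|_{X_\mu} \leq \sum_{\eta,\zeta} \sup_y |e^{\epsilon_0(1+\mu-y)_+|\zeta|} f_\zeta(y)| \cdot \sup_y w(y)\, |e^{\epsilon_0(1+\mu-y)_+|\eta|} g_\eta(y)|,
\end{align*}
whose second factor, after summing in $\eta$, reproduces $\|g\|_{X_\mu}$.

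The auxiliary $L^\infty_y$ velocity estimate is the heart of the argument and rests on Lemma~\ref{bio:sav}. From \eqref{EQthisijhhssdinthegdsfg123} the kernel for $u_{1,\eta}(y)$ decays like $e^{-|\eta||y-z|}$, and since $\epsilon_0 < 1$ the inequality $e^{\epsilon_0(z-y)_+|\eta|}\leq e^{|\eta||y-z|}$ permits transferring the weight from $y$ to $z$, obtaining
\begin{align*}
\sup_y |e^{\epsilon_0(1+\mu-y)_+|\eta|} u_{1,\eta}(y)| \lesssim \int_0^\infty e^{\epsilon_0(1+\mu-z)_+|\eta|}\, |\omega_\eta(z)|\,dz.
\end{align*}
Splitting the $z$-integral at $1+\mu$ and summing in $\eta$ gives the $\|\omega\|_{Y_\mu}$ contribution from $z\in[0,1+\mu]$ and, via Cauchy--Schwarz against the linear weight $z$, the $\|\omega\|_{S_\mu}$ contribution from $z > 1+\mu$. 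The identical bound holds for $u_{2,\eta}/y$ because the prefactor $(1-e^{-2|\eta|y})/(|\eta|y)$ in \eqref{EQthisijhhssdinthegdsfg124} is uniformly bounded. Combining this with the Fourier-convolution scheme yields \eqref{EQthisijhhssdinthegdsfg128}.

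For \eqref{EQthisijhhssdinthegdsfg127} I apply $\partial_x$ or $y\partial_y$, distribute by Leibniz, and treat each piece by the same approach. When a derivative lands on $\omega$ the vorticity factor moves to the $i+j=2$ sum on the right; when $\partial_x$ lands on the velocity, the Biot--Savart estimate is applied with $\omega$ replaced by $\partial_x^i\omega$ for $i\leq 2$, producing the factor $\|\partial_x^i\omega\|_{Y_\mu}+\|\partial_x^i\omega\|_{S_\mu}$. The subtle case is $y\partial_y$ hitting $u_1$: here I use the vorticity identity $\partial_y u_1 = \partial_x u_2 - \omega$, which gives
\begin{align*}
y\partial_y u_1 = y\partial_x u_2 - y\omega.
\end{align*}
The first summand is absorbed into the bound for a $\partial_x$-velocity, while the second produces the pointwise factor $y|\omega|$, which by Remark~\ref{R04}(d) satisfies $y|\omega|\lesssim w(y)|\omega|$ and therefore contributes the distinctive $\|\omega\|_{X_\mu}$ coefficient in \eqref{EQthisijhhssdinthegdsfg127}. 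The remaining case, $y\partial_y$ hitting $u_2/y$, is handled via the incompressibility-based identity $(y\partial_y)(u_2/y) = \partial_y u_2 - u_2/y = -\partial_x u_1 - u_2/y$, which reduces it to already treated quantities.

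The main conceptual obstacle is not any single estimate but the bookkeeping: across all Leibniz branches I must ensure that the vorticity factor never exceeds total order two, the $w(y)$ weight never falls on the velocity side, and no velocity bound accidentally requires an $X_\mu$ norm of $\omega$ except through the single channel $-y\omega$ above. Ancillary to this is the uniform control of $u_{2,\eta}/y$ down to $y=0$, which relies on the boundary cancellation $u_{2,\eta}(0)=0$ encoded in \eqref{EQthisijhhssdinthegdsfg124} and on the uniform boundedness of $(1-e^{-2|\eta|y})/(|\eta|y)$; this is what permits the $w(y) \gtrsim \sqrt{\nu}$ weight to be placed on the vorticity factor without generating any $\nu$-dependent loss.
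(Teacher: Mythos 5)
Your proposal is correct and follows essentially the same route as the paper: the Fourier--Young convolution splitting of the $X_\mu$ norm with the weight placed on the vorticity factor, the Biot--Savart sup-norm estimates for $u_1$ and $u_2/y$ obtained by transferring the exponential weight and splitting the $z$-integral at $1+\mu$ into $Y_\mu$ and $S_\mu$ contributions, and the incompressibility identity for $(y\partial_y)(u_2/y)$ are exactly the paper's Lemmas~\ref{L14} and~\ref{L01}. The only local difference is that for $y\partial_y u_1$ you invoke $\partial_y u_1=\partial_x u_2-\omega$ rather than differentiating the Biot--Savart kernel directly as the paper does; both produce the same boundary-type term $-y\omega$ and hence, via Remark~\ref{R04}(d), the same $\lVert\omega\rVert_{X_\mu}$ coefficient.
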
 \colb \par Before the proof of Lemma~\ref{L01}, we analyze the first order derivatives of the nonlinear term. By the Leibniz rule, for $i+j = 1$, we have \begin{align} \partial_{x}^i(y\partial_{y})^jN_\xi &= ( \partial_{x}^i(y\partial_{y})^j u_1\partial_{x}\omega)_\xi+\left((y\partial_{y})^j\left(\frac{ \partial_{x}^i u_2}{y}\right) y\partial_{y}\omega\right)_\xi \notag\\ & \quad +(u_1 \partial_{x}^{i+1}(y\partial_{y})^j\omega)_\xi+\left(\frac{u_2}{y} \partial_{x}^i(y\partial_{y})^{j+1}\omega\right)_\xi \, . \llabel{sldkfjg;ls dslksfgj s;lkdfgj ;sldkfjg ;sldkfgj s;ldfkgj s;ldfkgj gfisasdoifaghskcx,.bvnliahglidhgsd gs sdfgh sdfg sldfg sldfgkj slfgj sl;dfgkj sl;dfgjk sldgfkj jiurwe alskjfa;sd fasdfEQthisijhhssdinthegdsfg:der:non} \end{align} Using the triangle inequality we have $e^{\ee(1+\mu-y)_+ |\xi|} \leq e^{\ee(1+\mu-y)_+ |\eta|} e^{\ee(1+\mu-y)_+ |\xi-\eta|}$, and thus, by the definition of the $X_{\mu}$ norm and Young's inequality in $\xi$ and $\eta$, it follows that \begin{align} \lVert \partial_{x}^i(y\partial_{y})^j N(s)\rVert_{X_{\mu}} & \les \lVert \partial_{x}\omega\rVert_{X_{\mu}} \sum_{\xi}\sup_{y\in\Omega_{\mu}}e^{\ee(1+\mu-y)_+|\xi|}|(\partial_{x}^i(y\partial_{y})^ju_1)_\xi| \notag \\&\indeq +\lVert y\partial_{y}\omega\rVert_{X_{\mu}} \sum_{\xi}\sup_{y\in\Omega_{\mu}}e^{\ee(1+\mu-y)_+|\xi|}\left|\left((y\partial_{y})^j \left(\frac{\partial_{x}^i u_2}{y}\right)\right)_\xi\right| \notag \\&\indeq +\lVert \partial_{x}^{i+1} (y\partial_y)^j \omega\rVert_{X_{\mu}} \sum_{\xi}\sup_{y\in\Omega_{\mu}}e^{\ee(1+\mu-y)_+|\xi|}|(u_1)_\xi| \notag \\&\indeq +\lVert \partial_{x}^i(y\partial_{y})^{j+1} \omega\rVert_{X_{\mu}} \sum_{\xi}\sup_{y\in\Omega_{\mu}}e^{\ee(1+\mu-y)_+|\xi|}\left|\left(\frac{u_2}{y}\right)_\xi\right| \, . \label{EQthisijhhssdinthegdsfg130} \end{align} Thus, in order to prove \eqref{EQthisijhhssdinthegdsfg127}, we only need to estimate the above norms of the velocity terms. These inequalities are collected in the next lemma. \par \cole \begin{lemma} \label{L14}Let $\mu \in (0, \mu_0 - \gamma s)$ be arbitrary and let $0 \leq i+j \leq 1$. For the velocity $u_1$ and its derivatives, we have \begin{align} \sum_{\xi}\sup_{y\in\Omega_{\mu}}e^{\ee(1+\mu-y)_+|\xi|}|( \partial_x^i (y\partial_y)^j u_1)_\xi| \lesssim \lVert\partial_x^{i+j} \omega\rVert_{\YY_{\mu}}+\lVert\partial_x^{i+j} \omega\rVert_{S_{\mu}} + j \norm{\omega}_{X_\mu} \label{EQthisijhhssdinthegdsfg:X:mu:u1} \, , \end{align} while for the second velocity component $u_2$ the bound \begin{align} \sum_{\xi}\sup_{y\in\Omega_{\mu}}e^{\ee(1+\mu-y)_+|\xi|} \left| \left((y \partial_y)^j\left(\frac{\partial_x^i u_2}{y}\right)\right)_\xi\right| \lesssim \lVert \partial_{x}^{i+1}\omega\rVert_{\YY_{\mu}} +\lVert \partial_{x}^{i+1}\omega\rVert_{S_{\mu}} \label{EQthisijhhssdinthegdsfg:X:mu:u2} \end{align} holds. \end{lemma}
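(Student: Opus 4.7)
The plan is to reduce each sup-in-$y$ estimate to a one-dimensional convolution inequality against $\omega_\xi$ via the Biot--Savart representations of Lemma~\ref{bio:sav}, and then to split the resulting $z$-integrals at $z=1+\mu$ into an analytic part (controlled by $\YY_\mu$) and a Sobolev tail (controlled by $S_\mu$). For brevity I will present the argument for real $y$; since $|\Im y|\leq \mu_0 \Re y$ with $\mu_0 \leq 1/10$, the extension to $y\in \Omega_\mu\setminus\RR$ via contour integrals on $\partial \Omega_\theta$ introduces only notational changes, as was already pointed out in Remark~\ref{R03}.

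For the base case $(i,j)=(0,0)$ of \eqref{EQthisijhhssdinthegdsfg:X:mu:u1}, I would take absolute values in \eqref{EQthisijhhssdinthegdsfg123} to obtain $|u_{1,\xi}(y)|\lesssim \int_0^\infty e^{-|\xi||y-z|}|\omega_\xi(z)|\,dz$. Splitting at $z=1+\mu$ and using the standard separation $e^{\ee(1+\mu-y)_+|\xi|}\leq e^{\ee(1+\mu-z)_+|\xi|}e^{\ee(z-y)_+|\xi|}$ with $\ee<1$ (so that $e^{\ee(z-y)_+|\xi|}e^{-|\xi||y-z|}\leq 1$), the analytic region produces $\lVert\omega\rVert_{\YY_\mu}$ after taking $\sup_y$ and summing in $\xi$. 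In the Sobolev region $z\geq 1+\mu$ the weight $e^{\ee(1+\mu-z)_+|\xi|}$ is trivial, and a Cauchy--Schwarz step converts $\int_{1+\mu}^\infty |\omega_\xi(z)|\,dz$ into $\lVert z\omega_\xi\rVert_{L^2(z\geq 1+\mu)}$, contributing $\lVert\omega\rVert_{S_\mu}$.

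Because the Biot--Savart operator commutes with $\partial_x$, the case $(i,j)=(1,0)$ of \eqref{EQthisijhhssdinthegdsfg:X:mu:u1} follows at once from the base case with $\omega$ replaced by $\partial_x \omega$. For $(i,j)=(0,1)$, direct differentiation of \eqref{EQthisijhhssdinthegdsfg123} in $y$ yields, after the $z=y$ boundary contributions combine cleanly, the identity $\partial_y u_{1,\xi}(y)=-\omega_\xi(y)+|\xi|\cdot(\text{Biot--Savart-type integrals against }\omega_\xi)$. Multiplying by $y$, the pointwise term $-y\omega_\xi(y)$ is controlled via $y\leq w(y)$ from Remark~\ref{R04}(d), producing the summand $j\lVert\omega\rVert_{X_\mu}$ on the right of \eqref{EQthisijhhssdinthegdsfg:X:mu:u1}; the remaining $y|\xi|\cdot(\cdots)$ pieces are covered by the base estimate applied to $\partial_x\omega$, using $y\leq 1+\mu_0$ and $|\xi||\omega_\xi|=|(\partial_x\omega)_\xi|$.

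For \eqref{EQthisijhhssdinthegdsfg:X:mu:u2} I would start from \eqref{EQthisijhhssdinthegdsfg124} and exploit the elementary bounds $1-e^{-2|\xi|z}\leq 2|\xi|z$ (in the $z\leq y$ integrand) and $1-e^{-2|\xi|y}\leq 2|\xi|y$ (in the $z\geq y$ integrand) to absorb the dividing factor of $y$; this exhibits $u_{2,\xi}/y$ as a convolution of $\omega_\xi$ against a kernel bounded by $|\xi|e^{-|\xi||y-z|}$, i.e.\ essentially the Biot--Savart integral for $\partial_x\omega$. The base case analysis then delivers $\lVert\partial_x\omega\rVert_{\YY_\mu}+\lVert\partial_x\omega\rVert_{S_\mu}$ when $(i,j)=(0,0)$. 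The case $(i,j)=(1,0)$ again reduces by commuting $\partial_x$, and for $(i,j)=(0,1)$ I would expand $(y\partial_y)(u_2/y)=\partial_y u_2 - u_2/y$ and invoke the divergence-free relation $\partial_y u_{2,\xi}=-i\xi u_{1,\xi}$, so that $\partial_y u_2$ is handled via the $(1,0)$ case of \eqref{EQthisijhhssdinthegdsfg:X:mu:u1} and $u_2/y$ by the part just proved. The main technical point to monitor throughout is that in every $j=1$ case the pointwise evaluation of $\omega_\xi$ at $y$ must be absorbed by the $X_\mu$ weight $w$ via Remark~\ref{R04}(d), since no additional derivative of $\omega$ is available on the right-hand side.
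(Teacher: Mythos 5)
Your proposal is correct and follows essentially the same route as the paper: the same splitting of the Biot--Savart integrals at $z=1+\mu$ with the exponential separation $e^{\ee(1+\mu-y)_+|\xi|}\le e^{\ee(1+\mu-z)_+|\xi|}e^{\ee(z-y)_+|\xi|}$ and Cauchy--Schwarz on the tail, the same conormal-derivative computation isolating the pointwise term $-y\omega_\xi(y)$ absorbed by Remark~\ref{R04}(d), the same $|\xi|$-absorption of the $1/y$ factor for $u_2$, and the same use of incompressibility for the $(0,1)$ case of \eqref{EQthisijhhssdinthegdsfg:X:mu:u2}. No gaps.
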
 \colb \par \begin{proof}[Proof of Lemma~\ref{L14}] First we prove \eqref{EQthisijhhssdinthegdsfg:X:mu:u1}, starting with the case $(i,j) = (0,0)$. We decompose the integral \eqref{EQthisijhhssdinthegdsfg123} for $u_1$ as \begin{align} u_{1,\xi}(y) =& \frac{1}{2}\biggl(-\int_0^y e^{-|\xi|(y-z)}(1-e^{-2|\xi| z})\omega_\xi(s, z) \,dz \notag \\&\indeq + \left(\int_y^{1+\mu}+\int_{1+\mu}^\infty\right) e^{-|\xi|(z-y)}(1+e^{-2|\xi| y})\omega_\xi(s, z) \,dz\biggr) \notag \\=& I_{1}+I_{2}+I_{3} \, . \notag \end{align} Note that we have \begin{align} &e^{\ee(1+\mu-y)_+|\xi|}e^{- |y-z||\xi|} \leq e^{\ee(1+\mu-z)_+|\xi|} e^{\ee(z-y)_+|\xi|} e^{- |y-z||\xi|} \le e^{\ee(1+\mu-z)_+|\xi|} \label{EQthisijhhssdinthegdsfg133} \end{align} provided $\epsilon_0\leq 1$. Hence, we obtain \begin{align} e^{\ee(1+\mu-y)_+|\xi|} (|I_1|+|I_2|) \les \int_{0}^{1+\mu} e^{\ee(1+\mu-z)_+|\xi|} |\omega_{\xi}(s,z)| \,dz \les \lVert e^{\ee(1+\mu-y)_+|\xi|}\omega\rVert_{\SL_{\mu}} \, . \label{EQthisijhhssdinthegdsfg134} \end{align} For the term $I_3$, using \eqref{EQthisijhhssdinthegdsfg133} we have \begin{align} e^{\ee(1+\mu-y)_+|\xi|}|I_{3}| &\les \int^{\infty}_{1+\mu} |\omega_\xi(s, z)| \,dz \lesssim \lVert z \omega_\xi\rVert_{L^2(z \geq 1+\mu)} \, . \label{EQthisijhhssdinthegdsfg136} \end{align} Summing the bounds \eqref{EQthisijhhssdinthegdsfg134} and \eqref{EQthisijhhssdinthegdsfg136} in $\xi$, we conclude the proof of \eqref{EQthisijhhssdinthegdsfg:X:mu:u1} when $i+j=0$. \par The case $(i,j) = (1,0)$ amounts to multiplying by $\ii \xi$, and thus the assertion follows by the same proof as for $(i,j) = (0,0)$. Consider now the case $(i,j)=(0,1)$. Taking the conormal derivative of \eqref{EQthisijhhssdinthegdsfg123} gives \begin{align} y\partial_{y}u_{1,\xi}=& \frac{y}{2} \biggl( \int_0^y e^{-|\xi|(y-z)}(1-e^{-2|\xi| z}) |\xi| \omega_\xi(s, z) \,dz \notag \\&\indeq + \int_y^\infty e^{-|\xi|(z-y)}(1+e^{-2|\xi| y}) |\xi| \omega_\xi(s, z) \,dz \notag \\&\indeq - 2 \int_y^\infty e^{-|\xi|(z-y)}e^{-2|\xi| y}|\xi| \omega_\xi(s, z) \,dz\biggr) -y\omega_\xi(y) \, . \label{EQthisijhhssdinthegdsfg:der:y:u1} \end{align} The first three terms in \eqref{EQthisijhhssdinthegdsfg:der:y:u1} are treated as in the case $i+j=0$. The presence of the additional factor $|\xi|$ causes $\omega$ to be replaced by $\partial_x \omega$ in the upper bounds. For the last term in \eqref{EQthisijhhssdinthegdsfg:der:y:u1}, we have \begin{align} \sum_{\xi}\sup_{y\in\Omega_{\mu}}e^{\ee(1+\mu-y)_+|\xi|}y|\omega_\xi(y)| &\les \sum_{\xi}\sup_{y\in\Omega_{\mu}}e^{\ee(1+\mu-y)_+|\xi|}w(y)|\omega_\xi(y)| \les \lVert \omega\rVert_{X_{\mu}} \, , \llabel{sldkfjg;ls dslksfgj s;lkdfgj ;sldkfjg ;sldkfgj s;ldfkgj s;ldfkgj gfisasdoifaghskcx,.bvnliahglidhgsd gs sdfgh sdfg sldfg sldfgkj slfgj sl;dfgkj sl;dfgjk sldgfkj jiurwe alskjfa;sd fasdfEQthisijhhssdinthegdsfg:pt:u1} \end{align} where we have used Remark~\ref{R04}(d). This concludes the proof of \eqref{EQthisijhhssdinthegdsfg:X:mu:u1} for $(i,j)= (0,1)$. \par Next, we prove \eqref{EQthisijhhssdinthegdsfg:X:mu:u2}, beginning with the case $(i,j) = (0,0)$. Using \eqref{EQthisijhhssdinthegdsfg124} we decompose \begin{align} \frac{u_{2,\xi}}{y} =& - \frac{\xi}{|\xi|}\frac{\ii}{2y} \biggl(\int_0^y e^{-|\xi|(y-z)}(1-e^{-2|\xi| z})\omega_\xi(s, z) \,dz \notag \\&\indeq\indeq\indeq\indeq\indeq\indeq+ \left(\int_y^{1+\mu}+\int_{1+\mu}^\infty\right) e^{-|\xi|(z-y)}(1-e^{-2|\xi| y})\omega_\xi(s, z) \,dz\biggr) \notag \\=& J_1 + J_2 + J_3 \, . \llabel{sldkfjg;ls dslksfgj s;lkdfgj ;sldkfjg ;sldkfgj s;ldfkgj s;ldfkgj gfisasdoifaghskcx,.bvnliahglidhgsd gs sdfgh sdfg sldfg sldfgkj slfgj sl;dfgkj sl;dfgjk sldgfkj jiurwe alskjfa;sd fasdfEQthisijhhssdinthegdsfg139} \end{align} Using the bound \begin{equation} \left|\frac{1-e^{-2|\xi| z}}{y}\right| \les |\xi| \comma z\leq y \, , \llabel{sldkfjg;ls dslksfgj s;lkdfgj ;sldkfjg ;sldkfgj s;ldfkgj s;ldfkgj gfisasdoifaghskcx,.bvnliahglidhgsd gs sdfgh sdfg sldfg sldfgkj slfgj sl;dfgkj sl;dfgjk sldgfkj jiurwe alskjfa;sd fasdfEQthisijhhssdinthegdsfg140} \end{equation} we arrive at \begin{align} \left| \frac{u_{2,\xi}}{y} \right| &\les \int_0^y e^{-|\xi|(y-z)} |\xi| |\omega_\xi(s, z)| \,dz + \left(\int_y^{1+\mu}+\int_{1+\mu}^\infty\right) e^{-|\xi|(z-y)} |\xi| |\omega_\xi(s, z)| \,dz \, . \label{EQthisijhhssdinthegdsfg:MJ} \end{align} Using \eqref{EQthisijhhssdinthegdsfg133} and the same bounds as in \eqref{EQthisijhhssdinthegdsfg134}--\eqref{EQthisijhhssdinthegdsfg136}, we obtain the inequality \eqref{EQthisijhhssdinthegdsfg:X:mu:u2} for $i+j=0$. The case $(i,j)=(1,0)$ follows from the same argument, by adding an extra $x$~derivative. \par It remains to consider the case $(i,j)=(0,1)$. From the incompressibility we have \begin{align} \label{EQthisijhhssdinthegdsfg:der:y:u2} y\partial_{y}\left(\frac{u_{2,\xi}}{y}\right) = \partial_y u_{2,\xi} - \frac{u_{2,\xi}}{y} = - \ii \xi u_{1,\xi} - \frac{u_{2,\xi}}{y} \, . \end{align} The bound for the second term on the right of \eqref{EQthisijhhssdinthegdsfg:der:y:u2} was established in \eqref{EQthisijhhssdinthegdsfg:MJ}, whereas the bound for the first term follows by setting $(i,j) = (1,0)$ in \eqref{EQthisijhhssdinthegdsfg:X:mu:u1}. \end{proof} \par Having established Lemma~\ref{L14}, we return to the proofs of \eqref{EQthisijhhssdinthegdsfg128} and \eqref{EQthisijhhssdinthegdsfg127}. \par \begin{proof}[Proof of Lemma~\ref{L01}] In order to prove \eqref{EQthisijhhssdinthegdsfg128}, we use \eqref{EQthisijhhssdinthegdsfg129} and similarly to \eqref{EQthisijhhssdinthegdsfg130} we obtain \begin{align} \lVert N(s)\rVert_{X_{\mu}} &\les \lVert \partial_{x}\omega\rVert_{X_{\mu}} \sum_{\xi}\sup_{y\in\Omega_{\mu}}e^{\ee(1+\mu-y)_+|\xi|}|(u_1)_\xi| \notag \\&\indeq + \lVert y\partial_{y}\omega\rVert_{X_{\mu}} \sum_{\xi}\sup_{y\in\Omega_{\mu}}e^{\ee(1+\mu-y)_+|\xi|}\left|\left(\frac{u_2}{y}\right)_\xi\right| \, . \label{EQthisijhhssdinthegdsfg137} \end{align} Using Lemma~\ref{L14} with $i+j = 0$ we get \begin{align} \lVert N(s)\rVert_{X_{\mu}} &\les \bigl(\lVert \omega\rVert_{\YY_{\mu}}+\lVert \omega\rVert_{S_{\mu}}\bigr) \lVert \partial_{x}\omega\rVert_{X_{\mu}} + \bigl(\lVert \partial_{x}\omega\rVert_{\YY_{\mu}}+\lVert \partial_{x}\omega\rVert_{S_{\mu}}\bigr) \lVert y\partial_{y}\omega\rVert_{X_{\mu}} \, , \llabel{sldkfjg;ls dslksfgj s;lkdfgj ;sldkfjg ;sldkfgj s;ldfkgj s;ldfkgj gfisasdoifaghskcx,.bvnliahglidhgsd gs sdfgh sdfg sldfg sldfgkj slfgj sl;dfgkj sl;dfgjk sldgfkj jiurwe alskjfa;sd fasdfEQthisijhhssdinthegdsfg135} \end{align} and \eqref{EQthisijhhssdinthegdsfg128} is established. \par For \eqref{EQthisijhhssdinthegdsfg127}, we use the bounds of Lemma~\ref{L14} in \eqref{EQthisijhhssdinthegdsfg130} to obtain \begin{align} \sum_{i+j=1}\lVert \partial_{x}^i(y\partial_{y})^jN(s)\rVert_{X_{\mu}} &\les \lVert \partial_{x}\omega\rVert_{X_{\mu}} \left( \lVert \partial_{x}\omega\rVert_{\YY_{\mu}} +\lVert \partial_{x}\omega\rVert_{S_{\mu}} + \lVert \omega\rVert_{X_{\mu}} \right) \notag \\&\indeq + \lVert y\partial_{y}\omega\rVert_{X_{\mu}} \biggl(\sum_{i\leq 1} \lVert \partial_{x}^{i+1}\omega\rVert_{\YY_{\mu}} +\lVert \partial_{x}^{i+1}\omega\rVert_{S_{\mu}} \biggr) \notag \\&\indeq + \biggl(\sum_{i+j=1} \lVert \partial_{x}^{i+1} (y\partial_y)^j \omega\rVert_{X_{\mu}} \biggr) \left( \lVert \omega\rVert_{\YY_{\mu}}+\lVert \omega\rVert_{S_{\mu}} \right) \notag \\&\indeq + \biggl(\sum_{i+j=1} \lVert \partial_{x}^{i} (y\partial_y)^{j+1} \omega\rVert_{X_{\mu}} \biggr) \left( \lVert \partial_{x}\omega\rVert_{\YY_{\mu}}+\lVert \partial_{x}\omega\rVert_{S_{\mu}} \right) \, , \llabel{sldkfjg;ls dslksfgj s;lkdfgj ;sldkfjg ;sldkfgj s;ldfkgj s;ldfkgj gfisasdoifaghskcx,.bvnliahglidhgsd gs sdfgh sdfg sldfg sldfgkj slfgj sl;dfgkj sl;dfgjk sldgfkj jiurwe alskjfa;sd fasdfEQthisijhhssdinthegdsfg141} \end{align} and \eqref{EQthisijhhssdinthegdsfg127} is proven. \end{proof} \par Next, we estimate the term $ \partial_x^i(y\partial_{y})^jN(s)$ for $0\le i+j\le 1$ in the $\YY$ norm. \cole \begin{lemma} \label{L09} Let $\mu \in (0, \mu_0 - \gamma s)$ be arbitrary. For the nonlinear term, we have the inequalities \begin{align} \lVert N(s)\rVert_{\YY_{\mu}} \lesssim \sum_{i \leq 1} \left(\lVert \partial_x^i\omega\rVert_{\YY_{\mu}}+\lVert \partial_x^i\omega\rVert_{S_{\mu}}\right) \sum_{i+j=1}\lVert \partial_x^i(y\partial_{y})^j\omega\rVert_{\YY_{\mu}} \label{EQthisijhhssdinthegdsfg258} \end{align} and \begin{align} \sum_{i+j=1} \lVert \partial_x^i(y\partial_{y})^jN(s)\rVert_{\YY_{\mu}} &\lesssim \biggl( \lVert \omega\rVert_{X_{\mu}} + \sum_{1\leq i\leq 2}\bigl( \lVert \partial_{x}^{i}\omega\rVert_{\YY_{\mu}} +\lVert \partial_{x}^{i}\omega\rVert_{S_{\mu}} \bigr) \biggr) \sum_{i+j=1}\lVert \partial_{x}^i(y\partial_{y})^j\omega\rVert_{\YY_{\mu}} \notag \\& \indeq + \sum_{i \leq 1} \left(\lVert \partial_x^i\omega\rVert_{\YY_{\mu}}+\lVert \partial_x^i\omega\rVert_{S_{\mu}}\right) \sum_{i+j=2}\lVert \partial_x^i(y\partial_{y})^j\omega\rVert_{\YY_{\mu}} \, . \label{EQthisijhhssdinthegdsfg257} \end{align} \end{lemma}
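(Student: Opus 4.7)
The plan is to mirror the proof of Lemma~\ref{L01}, with $\YY_\mu$ replacing $X_\mu$ in all the ``large'' factors, while reusing the pointwise velocity estimates from Lemma~\ref{L14}. The key algebraic tool is a product-type inequality for the $\YY_\mu$ norm: starting from the triangle inequality $|\xi|\leq |\xi-\eta|+|\eta|$, splitting the exponential weight accordingly, and then applying Young's convolution inequality in the $\xi$ variable together with H\"older's inequality on each contour $\partial\Omega_\theta$ (placing the velocity factor in $L^\infty_y$ and the vorticity factor in $L^1_y$), I obtain
\begin{align}
\lVert f g\rVert_{\YY_\mu}
\les
\lVert g\rVert_{\YY_\mu}
\sum_\xi \sup_{y\in\Omega_\mu} e^{\ee(1+\mu-y)_+|\xi|} |f_\xi(y)|\,. \notag
\end{align}
The right-hand sup-sum is precisely the norm that Lemma~\ref{L14} controls when $f$ is a velocity component or one of its derivatives.

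For \eqref{EQthisijhhssdinthegdsfg258}, I use the decomposition $N = u_1 \partial_x\omega + (u_2/y)\,(y\partial_y\omega)$ from \eqref{EQthisijhhssdinthegdsfg129}. Applying the product inequality above with $g=\partial_x\omega$ (respectively $g = y\partial_y\omega$) and $f = u_1$ (respectively $f = u_2/y$), and then invoking Lemma~\ref{L14} with $(i,j)=(0,0)$ to bound the velocity sup-sum norms by $\lVert\omega\rVert_{\YY_\mu}+\lVert\omega\rVert_{S_\mu}$ and $\lVert\partial_x\omega\rVert_{\YY_\mu}+\lVert\partial_x\omega\rVert_{S_\mu}$ respectively, yields exactly \eqref{EQthisijhhssdinthegdsfg258}.

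For \eqref{EQthisijhhssdinthegdsfg257}, I apply the Leibniz rule to $\partial_x^i(y\partial_y)^j N$ for $i+j=1$, producing the $\YY_\mu$-analog of the four-term decomposition \eqref{EQthisijhhssdinthegdsfg130}. Each term is a product of a velocity factor (or a derivative thereof, up to one $\partial_x$ or one $y\partial_y$) and a vorticity factor (with total derivative count at most $2$). The product inequality applied to each term, combined with Lemma~\ref{L14} for $(i,j)\in\{(0,0),(1,0),(0,1)\}$, generates all the vorticity norms appearing on the right-hand side of \eqref{EQthisijhhssdinthegdsfg257}. The only delicate term is the conormal derivative $y\partial_y u_1$: the formula \eqref{EQthisijhhssdinthegdsfg:der:y:u1} contains, in addition to the convolution-type pieces, the boundary contribution $-y\omega_\xi(y)$. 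This pointwise piece is estimated using $y\les w(y)$ from Remark~\ref{R04}(d), which accounts for the appearance of the $\lVert\omega\rVert_{X_\mu}$ factor in the first line of \eqref{EQthisijhhssdinthegdsfg257}.

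The main thing to check carefully is that the product inequality respects the supremum over $\theta\in[0,\mu)$ implicit in the $\SL_\mu$ norm. This is not a serious obstacle: one performs the Young-in-$\xi$ split and the $L^\infty_y$--$L^1_y$ H\"older pairing on a fixed contour $\partial\Omega_\theta$ first, and then takes the supremum over $\theta$, noting that the velocity sup-sum from Lemma~\ref{L14} is taken over all of $\Omega_\mu\supset\partial\Omega_\theta$ and is therefore independent of $\theta$. With this order of operations, the proof is a direct transcription of that of Lemma~\ref{L01}.
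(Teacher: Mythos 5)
Your proposal is correct and follows essentially the same route as the paper: the paper's proof of Lemma~\ref{L09} likewise writes $N$ via \eqref{EQthisijhhssdinthegdsfg129}, uses the convolution/Young-in-$\xi$ product bound to place the velocity factors in the weighted $\ell^1_\xi L^\infty_y$ norm and the vorticity factors in $\YY_\mu$ (exactly your product inequality, stated ``as in \eqref{EQthisijhhssdinthegdsfg130}''), and then concludes by Lemma~\ref{L14}. Your additional remarks — the origin of the $\lVert\omega\rVert_{X_\mu}$ factor from the $-y\omega_\xi(y)$ term in \eqref{EQthisijhhssdinthegdsfg:der:y:u1} via Remark~\ref{R04}(d), and the order of operations with respect to the supremum over $\theta$ — correctly fill in the details the paper leaves implicit.
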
 \colb \par \begin{proof}[Proof of Lemma~\ref{L09}] By writing the nonlinear term as in \eqref{EQthisijhhssdinthegdsfg129}, and using the definition of the $\YY_\mu$ norm, we obtain, similarly to \eqref{EQthisijhhssdinthegdsfg137}, \begin{align*} \lVert N(s)\rVert_{\YY_{\mu}} &\les \lVert \partial_{x}\omega\rVert_{\YY_{\mu}} \sum_{\xi}\sup_{y\in\Omega_{\mu}}e^{\ee(1+\mu-y)_+|\xi|}|(u_1)_\xi| + \lVert y\partial_{y}\omega\rVert_{\YY_{\mu}} \sum_{\xi}\sup_{y\in\Omega_{\mu}}e^{\ee(1+\mu-y)_+|\xi|}\biggl|\left(\frac{u_2}{y}\right)_\xi\biggr| \, . \end{align*} Using the bounds in Lemma~\ref{L14} with $i+j = 0$, we arrive at \eqref{EQthisijhhssdinthegdsfg258}. \par For $i+j=1$, by the definition of $\YY_{\mu}$ norm and Young's inequality, we have as in \eqref{EQthisijhhssdinthegdsfg130} \begin{align} \lVert \partial_{x}^i(y\partial_{y})^j N(s)\rVert_{\YY_{\mu}} & \les \lVert \partial_{x}\omega\rVert_{\YY_{\mu}} \sum_{\xi}\sup_{y\in\Omega_{\mu}}e^{\ee(1+\mu-y)_+|\xi|}|(\partial_{x}^i(y\partial_{y})^ju_1)_\xi| \notag \\&\indeq +\lVert y\partial_{y}\omega\rVert_{\YY_{\mu}} \sum_{\xi}\sup_{y\in\Omega_{\mu}}e^{\ee(1+\mu-y)_+|\xi|}\left|\left((y\partial_{y})^j \left(\frac{\partial_{x}^i u_2}{y}\right)\right)_\xi\right| \notag \\&\indeq +\lVert \partial_{x}^{i+1} (y\partial_y)^j \omega\rVert_{\YY_{\mu}} \sum_{\xi}\sup_{y\in\Omega_{\mu}}e^{\ee(1+\mu-y)_+|\xi|}|(u_1)_\xi| \notag \\&\indeq +\lVert \partial_{x}^i(y\partial_{y})^{j+1} \omega\rVert_{\YY_{\mu}} \sum_{\xi}\sup_{y\in\Omega_{\mu}}e^{\ee(1+\mu-y)_+|\xi|}\left|\left(\frac{u_2}{y}\right)_\xi\right| \, . \llabel{sldkfjg;ls dslksfgj s;lkdfgj ;sldkfjg ;sldkfgj s;ldfkgj s;ldfkgj gfisasdoifaghskcx,.bvnliahglidhgsd gs sdfgh sdfg sldfg sldfgkj slfgj sl;dfgkj sl;dfgjk sldgfkj jiurwe alskjfa;sd fasdfEQthisijhhssdinthegdsfg260} \end{align} The proof of \eqref{EQthisijhhssdinthegdsfg257} is then concluded by an application of Lemma~\ref{L14}. \end{proof} \par To conclude this section we consider the Sobolev norm estimates for the nonlinear term. \par \cole \begin{lemma} \label{L04} Let $\mu \in (0, \mu_0 - \gamma s)$ be arbitrary. We have \begin{align} \lVert N(s)\rVert_{S_\mu} \lesssim \left( \norm{\omega}_{\YY_\mu} + \norm{\omega}_{S_\mu} \right) \sum_{i+j=1} \lVert \partial_x^i\partial_{y}^j\omega\rVert_{S_\mu} \label{EQthisijhhssdinthegdsfg164} \end{align} and \begin{align} \sum_{i+j=1} \lVert \partial_x^i\partial_{y}^jN (s)\rVert_{S_\mu} & \lesssim \sum_{i+j\le1} \left( \lVert \partial_x^i \partial_y^j \omega\rVert_{\YY_{\mu}} + \lVert \partial_x^i \partial_y^j \omega\rVert_{S_{\mu}} \right) \sum_{i+j\le1} \lVert\partial_x^i \partial_y^j \omega\rVert_{S_{\mu}} \notag \\&\indeq + \left( \lVert \omega\rVert_{\YY_{\mu}} + \lVert \omega\rVert_{S_{\mu}} \right) \sum_{i+j=2} \lVert\partial_x^i \partial_y^j \omega\rVert_{S_{\mu}} \, . \label{EQthisijhhssdinthegdsfg155} \end{align} \end{lemma}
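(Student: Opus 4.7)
The plan is to bound the nonlinearity $N = u_1\partial_x\omega + u_2\partial_y\omega$ in the Sobolev region $\{y\geq 1+\mu\}$ by pairing an $L^\infty_y$ estimate on a velocity factor with a $y$-weighted $L^2_y$ estimate on a vorticity-derivative factor, and then handling the $\xi$-summation via Young's inequality for $\ell^1*\ell^1\to\ell^1$ convolutions. Writing $N_\xi(y) = \sum_\eta\bigl(i\eta\, u_{1,\xi-\eta}(y)\omega_\eta(y) + u_{2,\xi-\eta}(y)\partial_y\omega_\eta(y)\bigr)$, I would estimate $\|yN_\xi\|_{L^2(y\geq 1+\mu)}$ by $\sum_\eta \|u_{i,\xi-\eta}\|_{L^\infty(y\geq 1+\mu)} \|y(\partial\omega)_\eta\|_{L^2(y\geq 1+\mu)}$, and then sum in $\xi$. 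This reduces \eqref{EQthisijhhssdinthegdsfg164} to the key input
\[
\sum_\xi \|u_{i,\xi}\|_{L^\infty(y\geq 1+\mu)} \les \|\omega\|_{\YY_\mu} + \|\omega\|_{S_\mu}, \qquad i=1,2.
\]

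To establish this velocity bound, I would use the Biot–Savart formulas of Lemma~\ref{bio:sav}. For $y\geq 1+\mu$, one has $|u_{i,\xi}(y)| \les \int_0^\infty e^{-|\xi||y-z|}|\omega_\xi(z)|\,dz$ (with the analogous representation for $u_2$ requiring only a minor adjustment). Splitting the $z$-integral, the analytic piece is bounded by $\int_0^{1+\mu} e^{\eps(1+\mu-z)_+|\xi|} |\omega_\xi(z)|\,dz$ (inserting the exponential weight, which is at least one), and sums in $\xi$ to $\|\omega\|_{\YY_\mu}$; the Sobolev piece is controlled by Cauchy–Schwarz against the $L^2$-integrable weight $1/z$, giving $\|z\omega_\xi\|_{L^2(z\geq 1+\mu)}$, which sums in $\xi$ to $\|\omega\|_{S_\mu}$. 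This concludes the proof of \eqref{EQthisijhhssdinthegdsfg164}.

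For the derivative estimate \eqref{EQthisijhhssdinthegdsfg155}, I would apply Leibniz to $\partial_x^i\partial_y^j N$ with $i+j=1$ and estimate each product as above. An $x$-derivative on $u$ corresponds to multiplication by $i\xi$ in Fourier, which is absorbed by the Biot–Savart estimate applied to $\partial_x\omega$ in place of $\omega$. A $y$-derivative on $u$ is rewritten using incompressibility and the definition of vorticity, via $\partial_y u_1 = \partial_x u_2 - \omega$ and $\partial_y u_2 = -\partial_x u_1$, which reduces the problem to $\partial_x$-derivatives of velocity (treated as above) plus a pointwise $\omega$-contribution. To bound $\sum_\xi\|\omega_\xi\|_{L^\infty(y\geq 1+\mu)}$, I would use the fundamental theorem of calculus $\omega_\xi(y) = -\int_y^\infty \partial_z\omega_\xi(z)\,dz$ together with Cauchy–Schwarz against $1/z$, obtaining a bound by $\|\partial_y\omega\|_{S_\mu}$. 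Young's inequality in $\xi$ then delivers the stated right-hand side.

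The main technical subtlety lies in the bookkeeping of \eqref{EQthisijhhssdinthegdsfg155}: two derivatives on $\omega$ may only be paired with the velocity factor $u$ itself (no derivatives on velocity), since our $L^\infty$ velocity bound uses only $\|\omega\|_{\YY_\mu} + \|\omega\|_{S_\mu}$; when a derivative falls on the velocity, the remaining vorticity factor carries at most one derivative, matching the first sum on the right-hand side of \eqref{EQthisijhhssdinthegdsfg155}. Additional care is needed in handling the pointwise $\omega$-contribution produced by $\partial_y u_1$, since $\omega$ is not a priori in $L^\infty(y\geq 1+\mu)$; the weighted one-dimensional Sobolev-type bound via $\partial_y\omega\in S_\mu$ compensates for this, and keeps all terms within the allowed orders.
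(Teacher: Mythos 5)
Your proposal is correct and follows essentially the same route as the paper: Leibniz rule plus H\"older in $y$ and Young in $\xi$, the Biot--Savart bound $\sum_\xi\lVert u_{i,\xi}\rVert_{L^\infty(y\ge 1+\mu)}\les\lVert\omega\rVert_{\YY_\mu}+\lVert\omega\rVert_{S_\mu}$ obtained by splitting the $z$-integral at $1+\mu$, the incompressibility identities $\partial_y u_1=-\omega+\partial_x u_2$ and $\partial_y u_2=-\partial_x u_1$ to trade normal for tangential derivatives, and the fundamental theorem of calculus with Cauchy--Schwarz to control $\sum_\xi\lVert\omega_\xi\rVert_{L^\infty(y\ge 1+\mu)}$ by $\lVert\partial_y\omega\rVert_{S_\mu}$. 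The bookkeeping you describe matches the paper's term-by-term accounting exactly.
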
 \colb \par \begin{proof}[Proof of Lemma~\ref{L04}] In order to prove \eqref{EQthisijhhssdinthegdsfg164} we write \begin{align} y (u\cdot \nabla \omega) = u_1 y \partial_x \omega + u_2 y \partial_y \omega \notag \end{align} and thus from H\"older's inequality in $y$ and Young's inequality in $\xi$ we deduce \begin{align} \sum_\xi \left(\lVert u_{1,\xi}\rVert_{L^\infty( y\ge1+\mu )} + \lVert u_{2,\xi}\rVert_{L^\infty( y\ge1+\mu )} \right) \les \sum_\xi \int_0^\infty |\omega_\xi (z)| \, dz \les \norm{\omega}_{\YY_\mu} + \norm{\omega}_{S_\mu} \, . \label{EQthisijhhssdinthegdsfg2048} \end{align} \par For \eqref{EQthisijhhssdinthegdsfg155}, when $i+j=1$, by the Leibniz rule we have \begin{align} y \partial_x^i \partial_y^j (u \cdot \nabla \omega) &= \partial_x^i \partial_y^j u_1 y \partial_x \omega + u_1 y \partial_x^{i+1} \partial_y^j \omega + \partial_x^i \partial_y^j u_2 y \partial_y \omega + u_2 y \partial_x^i \partial_y^{j+1} \omega \notag \end{align} and therefore from H\"older's inequality in $y$ and Young's inequality in $\xi$ we deduce \begin{align} \norm{\partial_x^i \partial_y^j N(s)}_{S_\mu} &\les \norm{\partial_x \omega}_{S_\mu} \sum_\xi \norm{|\xi|^i \partial_y^j u_{1,\xi}}_{L^\infty(y\geq 1+\mu)} + \norm{\partial_y \omega}_{S_\mu} \sum_\xi \norm{|\xi|^i \partial_y^j u_{2,\xi}}_{L^\infty(y\geq 1+\mu)} \notag\\ &\indeq + \norm{\partial_x^{i+1} \partial_y^j \omega}_{S_\mu} \sum_\xi \norm{ u_{1,\xi}}_{L^\infty(y\geq 1+\mu)} + \norm{\partial_x^i \partial_y^{j+1} \omega}_{S_\mu} \sum_\xi \norm{u_{2,\xi}}_{L^\infty(y\geq 1+\mu)} \, . \notag \end{align} For the last two terms in the above inequality we appeal to \eqref{EQthisijhhssdinthegdsfg2048}. For the first two terms, when $(i,j)=(1,0)$ the $L^\infty$ bound on the velocity field is again given by \eqref{EQthisijhhssdinthegdsfg2048} with an additional derivative in $x$, i.e., \begin{align} \sum_\xi \lVert (\partial_x u)_{1,\xi}\rVert_{L^\infty( y\ge1+\mu )} + \lVert (\partial_x u)_{2,\xi}\rVert_{L^\infty( y\ge1+\mu )} \les \norm{\partial_x \omega}_{\YY_\mu} + \norm{\partial_x \omega}_{S_\mu} \, . \label{EQthisijhhssdinthegdsfg2048b} \end{align} On the other hand, for $(i,j) = (0,1)$, we use incompressibility and the definition of $\omega$ to write \begin{align} \partial_y u_1 = - \omega + \partial_x u_2 \qquad \mbox{and} \qquad \partial_y u_2 = -\partial_x u_1 \, . \label{EQthisijhhssdinthegdsfg:Kobe} \end{align} For the $L^\infty$ bound on $\partial_x u$ we again appeal to \eqref{EQthisijhhssdinthegdsfg2048b} whereas for the $L^\infty$ norm of $\omega$ we use the fundamental theorem of calculus and H\"older's inequality to estimate \begin{align} \sum_\xi \lVert \omega_{\xi}\rVert_{L^\infty( y\ge1+\mu )} \les \sum_\xi \lVert y \partial_y \omega_{\xi}\rVert_{L^2( y\ge1+\mu )} = \norm{\partial_y \omega}_{S_\mu}\, . \label{EQthisijhhssdinthegdsfg:Shaq} \end{align} The bound \eqref{EQthisijhhssdinthegdsfg155} now follows by combining all the estimates. \end{proof} \par \section{The Sobolev norm estimate} \label{sec-sobolev} In this section, we provide an estimate on the Sobolev part of the norm \begin{equation} \sum_{i+j\le 3} \norm{\partial_x^i \partial_y^j \omega}_{S} = \sum_{i+j\le 3} \norm{y \partial_x^i \partial_y^j \omega}_{L^2_{x,y}(y\geq 1/2)} = \sum_{i+j\le 3}\left(\sum_\xi\lVert y\xi^i\partial_{y}^j\omega_\xi\rVert^2_{L^2(y\ge1/2)}\right)^{1/2} \, . \label{EQthisijhhssdinthegdsfg275} \end{equation} For a given norm $\norm{\cdot}$ it is convenient to introduce the notation \begin{align} \lVert D^k u\rVert = \sum_{i+j = k} \lVert \partial_x^i \partial_y^j u \rVert \, . \notag \end{align} \par We first state a lemma which estimates $u$ in terms of $\omega$. \par \cole \begin{lemma} \label{x:u} Let $t$ be such that $\gamma t \leq \mu_0/2$. Then we have \begin{equation} \sum_{0\leq k \leq 2} \lVert D^k u(t) \rVert_{L^\infty_{x,y}(y\ge1/4)} \les \sum_{i+j \leq 2} \sum_\xi \lVert |\xi|^i \partial_y^j u_\xi(t) \rVert_{L^\infty(y\ge1/4)} \lesssim \NORM{\omega}_t \label{EQthisijhhssdinthegdsfg281} \end{equation} and \begin{align} \norm{D^3 u(t)}_{L^2_{x,y}(y\geq 1/4)} \les \NORM{\omega}_t \, . \label{EQthisijhhssdinthegdsfg281b} \end{align} \end{lemma}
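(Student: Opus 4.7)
The plan is to reduce everything to pure $\partial_x^\ell u$ on $\{y \geq 1/4\}$ via the Biot--Savart formulas of Lemma~\ref{bio:sav} together with the algebraic identities $\partial_y u_1 = \partial_x u_2 - \omega$ and $\partial_y u_2 = -\partial_x u_1$, which were already recorded in \eqref{EQthisijhhssdinthegdsfg:Kobe}. Iterating these identities, any $\partial_x^i \partial_y^j u$ with $i+j \leq k \leq 3$ may be rewritten as $\pm \partial_x^{i+j} u + L_{k-1}(\omega)$, where $L_{k-1}(\omega)$ is a linear combination of derivatives of $\omega$ of order at most $k-1$. The first inequality in \eqref{EQthisijhhssdinthegdsfg281} is immediate from Fourier inversion: $\norm{D^k u}_{L^\infty_{x,y}(y\geq 1/4)} \leq \sum_{i+j=k} \sum_\xi \norm{|\xi|^i \partial_y^j u_\xi}_{L^\infty_y(y \geq 1/4)}$.

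For the second inequality in \eqref{EQthisijhhssdinthegdsfg281}, I would start from the pointwise bound $|u_\xi(y)| \lesssim \int_0^\infty e^{-|\xi|\,|y-z|} |\omega_\xi(z)|\, dz$ obtained directly from \eqref{EQthisijhhssdinthegdsfg123}--\eqref{EQthisijhhssdinthegdsfg124}, and split the $z$-integration at $z = 1+\mu_0$. On the near-boundary slab $[0, 1+\mu_0]$, summing in $\xi$ gives a contribution bounded by $\norm{\omega}_{Y_{\mu_0}}$. On the far-field region $[1+\mu_0, \infty)$, Cauchy--Schwarz together with $1/z \in L^2([1+\mu_0, \infty))$ yields $\int_{1+\mu_0}^\infty |\omega_\xi(z)|\, dz \lesssim \norm{z \omega_\xi}_{L^2(z \geq 1/2)}$, which sums to $\norm{\omega}_S \leq \NORM{\omega}_t$. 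Powers $|\xi|^\ell$ with $\ell \leq 2$ are absorbed by integrating by parts $\ell$ times in $z$ on the far-field, moving $|\xi|^\ell$ onto $\partial_z^\ell \omega_\xi$ (which is then controlled by the Sobolev $Z$ norm), while on the near-boundary slab they are absorbed directly by the higher-order analytic norms $\sum_{i+j \leq 2}\norm{\partial_x^i (y\partial_y)^j \omega}_{Y_{\mu_0}}$. The boundary terms generated at $z = y$, of the form $|\xi|^m \partial_y^{\ell-m}\omega_\xi(y)$ with $m < \ell$, are controlled pointwise in $y \geq 1/4$ via the 2D Sobolev embedding $H^2 \hookrightarrow L^\infty$ applied to $\norm{\omega}_Z$.

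For the $L^2$ bound \eqref{EQthisijhhssdinthegdsfg281b}, the same reduction gives $D^3 u = \pm\partial_x^3 u + L_2(\omega)$, with $L_2(\omega)$ directly bounded in $L^2(y\geq 1/4)$ by $\norm{\omega}_Z$. The top term $\partial_x^3 u_\xi = (i\xi)^3 u_\xi$ is again treated via Biot--Savart, using one additional integration by parts in $z$ on the far-field and Young's convolution inequality in $y$ to handle the remaining kernel $|\xi|\, e^{-|\xi||y-z|}$; the resulting expressions are estimated exactly as in the $L^\infty$ case but with $L^2$ in place of $L^\infty$ everywhere.

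The main obstacle is the nonlocality of the Biot--Savart law: values of $u$ on $\{y \geq 1/4\}$ depend on $\omega$ arbitrarily close to the boundary, where only the analytic norms $X, Y$ are available and no Sobolev regularity holds. This is resolved by splitting the $z$-integral at the interior level $z = 1+\mu_0$ and using the exponential decay $e^{-|\xi|\,|y-z|}$ of the Biot--Savart kernel at macroscopic distances, which decouples the two regions enough to combine the analytic and Sobolev parts of $\NORM{\omega}_t$ coherently.
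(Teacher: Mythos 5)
Your overall reduction is the same as the paper's: use the incompressibility identities \eqref{EQthisijhhssdinthegdsfg:Kobe} to trade $\partial_y$ for $\partial_x$ plus lower-order derivatives of $\omega$, bound the pure $\partial_x^\ell u$ through the Biot--Savart kernel $e^{-|\xi|\,|y-z|}$, and split the $z$-integral into a near-boundary slab (analytic norms) and a far field (Sobolev norms). But one step as written would fail. The lower-order terms $L_{k-1}(\omega)$ and the boundary terms you generate must be controlled on all of $\{y\geq 1/4\}$, whereas the norm $\norm{\cdot}_{\ZZZ}$ --- and hence the Sobolev embedding you invoke --- only sees $\{y\geq 1/2\}$; it gives no information on the strip $1/4\leq y\leq 1/2$. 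Your claim that $L_2(\omega)$ is ``directly bounded in $L^2(y\geq 1/4)$ by $\norm{\omega}_{\ZZZ}$'' is therefore false as stated. The missing ingredient is the $X_\mu$ norm: since $w(y)\gtrsim 1$ on $[1/4,1/2]$, the $X_\mu$ norm controls $\sup_{1/4\leq y\leq 1/2}|\partial_x^i(y\partial_y)^j\omega_\xi|$ uniformly in $\nu$, and this is exactly how the paper covers that strip (cf.~\eqref{EQthisijhhssdinthegdsfg2048bbb} and \eqref{EQthisijhhssdinthegdsfg1004}). Your proposal never invokes the $X$ norm, so the gap is structural rather than cosmetic.

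A second, smaller but genuine, issue is the summation in $\xi$ for the far field: ``$\int_{1+\mu_0}^\infty|\omega_\xi|\,dz\lesssim\norm{z\omega_\xi}_{L^2(z\geq 1/2)}$, which sums to $\norm{\omega}_S$'' conflates $\ell^1_\xi$ with $\ell^2_\xi$; what you actually need is $\sum_\xi\norm{z\,\xi^i\omega_\xi}_{L^2}$, while $S$ is an $\ell^2_\xi$ quantity. Passing from $\ell^2_\xi$ to $\ell^1_\xi$ costs a factor $1+|\xi|$ (Lemma~\ref{L06}), i.e.\ one extra tangential derivative; this is affordable because $\ZZZ$ carries three derivatives and you need at most two on $u$ in $L^\infty$, but it must be accounted for. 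Two further remarks: at time $t$ the analytic norms available inside $\NORM{\omega}_t$ are $Y_\mu$ only for $\mu<\mu_0-\gamma t$, not $Y_{\mu_0}$, so the split should occur at $1+\mu$ with, say, $\mu=(\mu_0-\gamma t)/10$ (the leftover interval $[1+\mu,1+\mu_0]\subset\{z\geq 1/2\}$ is absorbed by the Sobolev part); and the integrations by parts you propose for absorbing $|\xi|^\ell$ are avoidable --- the paper keeps $|\xi|^i$ on $\omega_\xi$ and uses $\norm{\partial_x^i\omega}_{Y_\mu}$ and $\norm{\partial_x^i\omega}_{S_\mu}$ directly, which sidesteps the boundary terms at $z=y$ and at the splitting point altogether.
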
 \colb \par \begin{proof}[Proof of Lemma~\ref{x:u}] The first inequality in \eqref{EQthisijhhssdinthegdsfg281}, in which the $L^\infty$ norm in $x$ is replaced by an $\ell^1$ norm in the $\xi$ variable is merely the Hausdorff-Young inequality. It thus remains to establish the second inequality in \eqref{EQthisijhhssdinthegdsfg281}. The case $j=0$ follows by the same argument as \eqref{EQthisijhhssdinthegdsfg2048}. Indeed, we only replace the norm $L^\infty(y\geq 1+\mu)$ with the norm $L^\infty(y\geq 1/4)$, which has no bearing on the estimates, to obtain \begin{align} \sum_\xi \lVert (\partial_x^i u)_{1,\xi}\rVert_{L^\infty( y\ge1/4 )} + \lVert (\partial_x^i u)_{2,\xi}\rVert_{L^\infty( y\ge1/4 )} \les \norm{\partial_x^i \omega}_{\YY_{\mu/2}} + \norm{\partial_x^i \omega}_{S_{\mu/2}} \label{EQthisijhhssdinthegdsfg2048bb} \end{align} for any $i \leq 2$ and $\mu > 0$. In particular, we may take \begin{align} \mu = \frac{\mu_0 - \gamma t}{10} \, . \label{EQthisijhhssdinthegdsfg:special:mu} \end{align} Note that since $\gamma t \leq \mu_0/2$ we have $\mu \geq \fractext{\mu_0}{20}$. To replace the $S_{\mu/2}$ norm, which is $\ell^1$ in $\xi$, with the $S$ norm, which is $\ell^2$ in $\xi$, we pay an additional price of $1 + |\xi|$ (cf.~Lemma~\ref{L06} below). Additionally, in \eqref{EQthisijhhssdinthegdsfg2048bb} we further appeal to the analyticity recovery for the $\YY$~norm, cf.~Lemma~\ref{ana:rec:l} below, and obtain \begin{align} \sum_{i\leq 2} \sum_\xi \lVert (\partial_x^i u)_{1,\xi}\rVert_{L^\infty( y\ge1/4 )} + \lVert (\partial_x^i u)_{2,\xi}\rVert_{L^\infty( y\ge1/4 )} \les \norm{\omega}_{\YY_{\mu}} + \sum_{i\leq 3} \norm{\partial_x^i \omega}_{S} \les \NORM{\omega}_t \, . \llabel{sldkfjg;ls dslksfgj s;lkdfgj ;sldkfjg ;sldkfgj s;ldfkgj s;ldfkgj gfisasdoifaghskcx,.bvnliahglidhgsd gs sdfgh sdfg sldfg sldfgkj slfgj sl;dfgkj sl;dfgjk sldgfkj jiurwe alskjfa;sd fasdfEQthisijhhssdinthegdsfg2048cc} \end{align} \par This concludes the proof of \eqref{EQthisijhhssdinthegdsfg281} when $j=0$ and $i \leq 2$. For the case $j=1$, we use \eqref{EQthisijhhssdinthegdsfg:Kobe} to convert the $\partial_y$ derivative into a $\partial_x$ derivative, at a cost of an additional term involving $\omega$. Similarly to \eqref{EQthisijhhssdinthegdsfg:Shaq}, appealing to Lemma~\ref{L06}, using that \colW $w(y)\gtrsim 1$ for $y \in [1/4,1/2]$\colb, and with $\mu$ as in \eqref{EQthisijhhssdinthegdsfg:special:mu} we get \begin{align} \sum_\xi \lVert |\xi|^i \omega_{\xi}\rVert_{L^\infty( y\ge1/4 )} &\les \sum_\xi \lVert w(y) e^{\ee(1+\mu-y)_+ |\xi|} |\xi|^i \omega_{\xi}\rVert_{L^\infty(1/4 \leq y\le1/2 )} + \sum_\xi \lVert y \partial_y |\xi|^i \omega_{\xi}\rVert_{L^2( y\ge1/2 )} \notag\\ &\les \norm{\partial_x^i \omega}_{X_\mu} + \norm{\partial_x^i \partial_y \omega}_{S} + \norm{\partial_x^{i+1} \partial_y \omega}_{S} \les \NORM{\omega}_t \label{EQthisijhhssdinthegdsfg2048bbb} \end{align} for $i \leq 1$. The above estimate gives \eqref{EQthisijhhssdinthegdsfg281} for $j=1$. It only remains to consider the case $(i,j)=(0,2)$. For this purpose, note that \begin{align} \partial_y^2 u_1 = -\partial_y \omega - \partial_x^2 u_1 \qquad \mbox{and} \qquad \partial_y^2 u_2 = \partial_x \omega - \partial_x^2 u_2 \, , \label{EQthisijhhssdinthegdsfg:Kobe:*} \end{align} which follows from \eqref{EQthisijhhssdinthegdsfg:Kobe} and incompressibility. The terms with two $x$ derivatives were already estimated in \eqref{EQthisijhhssdinthegdsfg2048bb}, whereas $\partial_x \omega$ was already bounded in \eqref{EQthisijhhssdinthegdsfg2048bbb}. Lastly, for the term $\partial_y \omega$, we have, similarly to \eqref{EQthisijhhssdinthegdsfg2048bbb}, \begin{align} \sum_\xi \lVert \partial_y \omega_{\xi}\rVert_{L^\infty( y\ge 1/4 )} &\les \sum_\xi \lVert e^{\ee(1+\mu-y)_+ |\xi|} w(y) y \partial_y \omega_{\xi}\rVert_{L^\infty(1/4 \leq y \leq 1/2 )} + \sum_\xi \lVert y \partial_y^2 \omega_{\xi}\rVert_{L^2( y\ge1/2 )} \notag\\ &\les \norm{y \partial_y \omega}_{X_\mu} + \norm{ \partial_y^2 \omega}_{S} + \norm{\partial_x \partial_y^2 \omega}_{S} \les \NORM{\omega}_t \,, \notag \end{align} which gives \eqref{EQthisijhhssdinthegdsfg281}. \par In order to conclude the proof of the lemma, we need to establish \eqref{EQthisijhhssdinthegdsfg281b}. For this purpose, fix $(i,j)$ such that $i+j = 3$. To avoid redundancy, we only consider the cases $(i,j)=(3,0)$ and $(i,j) = (0,3)$. First, using Lemma~\ref{bio:sav} and Young's inequality, we have \begin{align} \lVert \partial_{x}^3u \rVert_{L^2(y\ge1/4)}^2 &\lesssim \sum_\xi \lVert |\xi|^3 u_\xi\rVert_{L^2(y\ge1/4)}^2 \notag \\& \lesssim \sum_\xi \left\lVert \int_{0}^\infty e^{-|y-z||\xi|} |\xi|^3|\omega_\xi|(z)\,dz\right\rVert_{L^2(y\ge1/4)}^2 \notag \\& \les \sum_\xi \lVert |\xi|^{5/2}|\omega_\xi| \rVert_{L^1(z\leq 1/2)}^2 + \sum_\xi \lVert |\xi|^2|\omega_\xi| \rVert_{L^2(z\geq 1/2)}^2 \notag \\& \les \sum_\xi \lVert e^{\ee (1+\mu-z)_+ |\xi|} |\omega_\xi| \rVert_{L^1(z\leq 1/2)}^2 |\xi|^{5/2} e^{-\frac{\ee}{2} |\xi|} + \sum_\xi \lVert z |\xi|^2|\omega_\xi| \rVert_{L^2(z\geq 1/2)}^2 \notag \\ & \lesssim \norm{\omega}_{\YY_\mu}^2 + \norm{\partial_x^2 \omega}_S^2 \les \NORM{\omega}_t^2 \, , \llabel{sldkfjg;ls dslksfgj s;lkdfgj ;sldkfjg ;sldkfgj s;ldfkgj s;ldfkgj gfisasdoifaghskcx,.bvnliahglidhgsd gs sdfgh sdfg sldfg sldfgkj slfgj sl;dfgkj sl;dfgjk sldgfkj jiurwe alskjfa;sd fasdfEQthisijhhssdinthegdsfg291} \end{align} with $\mu$ as in \eqref{EQthisijhhssdinthegdsfg:special:mu}. In the last inequality above we used $\norm{ \cdot }_{\ell^2} \leq \norm{\cdot }_{\ell^1}$. Thus, we have proven \eqref{EQthisijhhssdinthegdsfg281b} for $(i,j) = (3,0)$. For the other extremal case, we apply the $y$ derivative to \eqref{EQthisijhhssdinthegdsfg:Kobe:*} and obtain \begin{align} \partial_y^3 u_1 = \partial_y^2 \omega + \partial_{x}^2 \omega - \partial_{x}^{3} u_2 \qquad \mbox{and} \qquad \partial_y^3 u_2 = -\partial_x \partial_y \omega + \partial_x^3 u_1 \, . \llabel{sldkfjg;ls dslksfgj s;lkdfgj ;sldkfjg ;sldkfgj s;ldfkgj s;ldfkgj gfisasdoifaghskcx,.bvnliahglidhgsd gs sdfgh sdfg sldfg sldfgkj slfgj sl;dfgkj sl;dfgjk sldgfkj jiurwe alskjfa;sd fasdfEQthisijhhssdinthegdsfg:Kobe:**} \end{align} The velocity terms $\partial_{x}^{3}u_1$ and $\partial_{x}^{3}u_2$ were already bounded above. Clearly, we have $\norm{D^2 \omega}_{L^2(y\geq 1/2)} \les \norm{D^2 \omega}_S \les \NORM{\omega}_t$. On the other hand, similarly to \eqref{EQthisijhhssdinthegdsfg2048bbb}, we have \[ \norm{D^2 \omega}_{L^2(1/4 \leq y\leq 1/2)} \les \norm{D^2 \omega}_{L^\infty(1/4 \leq y\leq 1/2)} \les \norm{D^2 \omega}_{X_\mu} \les \NORM{\omega}_t . \] In the last inequality we used that $\mu$ in \eqref{EQthisijhhssdinthegdsfg:special:mu} is bounded from below by $\mu_0/20$. This concludes the proof of the lemma. \end{proof} \par The remainder of this section is devoted to an a priori estimate for the norm $\sum_{i+j \leq 3} \lVert{\partial_x^i \partial_y^j \omega \rVert}_S$. For this purpose, denote \begin{align} \phi(y)=y\bar \psi(y) \, , \label{EQthisijhhssdinthegdsfg:phi:def} \end{align} where $\bar\psi\in C^\infty$ is a non-decreasing function such that $\bar\psi=0$ for $0\le y\le 1/4$ and $\bar\psi=1$ for $y\ge 1/2$. In order to estimate the norm in \eqref{EQthisijhhssdinthegdsfg275}, note that \begin{align} \norm{y f}_{L^2_{x,y}(y\geq 1/2)} \leq \norm{\phi f}_{L^2(\HH)} \,, \notag \end{align} so that it suffices to estimate this larger norm. \par \cole \begin{lemma} \label{L12a} For any $0 < t < \frac{\mu_0}{2\gamma}$ the estimate \begin{align} &\sum_{i+j\le 3} \lVert \phi \partial_x^i\partial_{y}^j\omega(t)\rVert^2_{L^2(\HH)} \notag\\ &\qquad \lesssim \Bigl(1+ t \sup_{s\in[0,t]} \NORM{\omega(s)}_s^3 \Bigr) e^{C t (1+ \sup_{s\in[0,t]} \NORM{\omega(s)}_s)} \sum_{i+j\le 3} \lVert \phi \partial_x^i\partial_{y}^j\omega_0\rVert^2_{L^2(\HH)} \llabel{sldkfjg;ls dslksfgj s;lkdfgj ;sldkfjg ;sldkfgj s;ldfkgj s;ldfkgj gfisasdoifaghskcx,.bvnliahglidhgsd gs sdfgh sdfg sldfg sldfgkj slfgj sl;dfgkj sl;dfgjk sldgfkj jiurwe alskjfa;sd fasdfEQthisijhhssdinthegdsfg286a} \end{align} holds, where $C>0$ is a constant independent of $\gamma$. \end{lemma}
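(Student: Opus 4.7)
The plan is to run an energy estimate for
$E(t) = \sum_{|\alpha|\le 3} \lVert \phi\, \partial^\alpha \omega(t)\rVert_{L^2(\HH)}^2$,
where $\partial^\alpha = \partial_x^i \partial_y^j$ with $|\alpha|=i+j$. First I would apply $\partial^\alpha$ to the vorticity equation $\omega_t + u\cdot\nabla\omega = \nu\Delta\omega$, pair with $\phi^2 \partial^\alpha \omega$, integrate over $\HH$, and sum over $|\alpha|\le 3$. Because $\phi$ is supported in $\{y\ge 1/4\}$ and vanishes identically near $y=0$, every integration by parts is interior and boundary contributions are absent. For the viscous term, two integrations by parts yield the good dissipation $\nu \lVert \phi \nabla\partial^\alpha\omega\rVert_{L^2}^2$, which I would simply drop, plus a commutator error supported on the strip $[1/4,1/2]$ where $\phi'$ is active; this latter piece is bounded by $C\nu \lVert \omega\rVert_{X_\mu}^2 \le C\NORM{\omega(s)}_s^2$ using $\nu\le 1$ and the fact that $w(y)\gtrsim 1$ on that strip.

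For the transport term, I would apply the Leibniz rule to write
$\partial^\alpha(u\cdot\nabla\omega) = u\cdot\nabla\partial^\alpha\omega + \sum_{0<\beta\le\alpha}\binom{\alpha}{\beta}\, \partial^\beta u\cdot\nabla\partial^{\alpha-\beta}\omega$.
The first piece, combined with incompressibility, reduces after one integration by parts to $-\int \phi\phi' u_2 |\partial^\alpha\omega|^2$, bounded by $\lVert u\rVert_{L^\infty(y\ge 1/4)} E(t) \les \NORM{\omega(s)}_s E(t)$ via Lemma~\ref{x:u}. The commutator terms with $|\beta|\le 2$ are handled by H\"older with the $L^\infty$ bound $\lVert D^{|\beta|}u\rVert_{L^\infty(y\ge 1/4)}\les \NORM{\omega(s)}_s$ from Lemma~\ref{x:u}, paired with the $L^2$ factor $\lVert \phi\, \nabla\partial^{\alpha-\beta}\omega\rVert_{L^2} \les E(t)^{1/2}$; whenever the derivative count on $\omega$ would exceed three, I trade one derivative of $\omega$ for one of $u$ via $\omega = \partial_x u_2 - \partial_y u_1$, bringing the count back within Lemma~\ref{x:u}'s range.

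The hard part will be the top-order commutator $|\beta|=3$, $|\alpha-\beta|=0$, namely $\int \phi^2 (D^3 u)\cdot \nabla\omega\cdot \partial^\alpha\omega$, because Lemma~\ref{x:u} only supplies the $L^2$ bound $\lVert \phi\, D^3 u\rVert_{L^2(y\ge 1/4)}\les \NORM{\omega(s)}_s$. I would therefore have to pair $D^3 u$ with an $L^\infty$ estimate on $\nabla\omega$, which I recover by observing that $\nabla\omega$ is, modulo incompressibility, a component of $D^2 u$, hence controlled in $L^\infty(y\ge 1/4)$ by $\NORM{\omega(s)}_s$ via the same lemma. This produces a contribution of size $\NORM{\omega(s)}_s^2\, E(t)^{1/2}$, which by Young's inequality I split as $\NORM{\omega(s)}_s E(t) + C\NORM{\omega(s)}_s^3$. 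Collecting everything yields the differential inequality
\begin{align}
\tfrac{d}{dt} E(t) \le C\bigl(1+\NORM{\omega(t)}_t\bigr) E(t) + C\, \NORM{\omega(t)}_t^3, \notag
\end{align}
and Gronwall's inequality then delivers $E(t) \le \bigl(E(0) + Ct\sup_{s\le t}\NORM{\omega(s)}_s^3\bigr)\, e^{Ct(1+\sup_{s\le t}\NORM{\omega(s)}_s)}$, which is the claimed bound.
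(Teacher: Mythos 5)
Your overall strategy coincides with the paper's: apply $\partial^\alpha$ to the vorticity equation, test against $\phi^2\partial^\alpha\omega$, use Lemma~\ref{x:u} for the velocity factors, control the leftover terms on the strip $\{1/4\le y\le 1/2\}$ by the analytic norm, and close with Gr\"onwall. Two steps, however, do not work as written. The main one is the top-order commutator. You invoke ``the $L^2$ bound $\lVert \phi\, D^3 u\rVert_{L^2(y\ge 1/4)}\les \NORM{\omega(s)}_s$'' from Lemma~\ref{x:u}, but \eqref{EQthisijhhssdinthegdsfg281b} is the \emph{unweighted} estimate $\norm{D^3u}_{L^2_{x,y}(y\geq 1/4)}\les\NORM{\omega}_t$. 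Since $\phi(y)=y$ for $y\ge 1/2$ is unbounded on $\HH$, both factors of $\phi$ in $\int_\HH \phi^2\, \partial^\beta u\cdot\nabla\omega\,\partial^\alpha\omega$ (with $|\beta|=3$) must be accounted for: one pairs with $\partial^\alpha\omega$ to give $E(t)^{1/2}$, and the other must land either on $D^3u$ (requiring the weighted bound you assert, which the lemma does not provide) or on $\nabla\omega$ (requiring $\norm{\phi\nabla\omega}_{L^\infty(\HH)}$, which is \emph{not} controlled by $\norm{D^2u}_{L^\infty(y\ge1/4)}$, again because $\phi$ is unbounded). The paper takes the second route: it bounds $\norm{\phi\nabla\omega}_{L^\infty(\HH)}$ via the Sobolev embedding $H^2(\HH)\subset L^\infty(\HH)$ applied to $\phi\omega$, obtaining $\sum_{i+j\le3}\norm{\partial_x^i\partial_y^j(\phi\omega)}_{L^2(\HH)}+\norm{\omega}_{X_\mu}\les E(t)^{1/2}+\norm{\omega}_{X_\mu}$, which feeds back into the energy and still yields an ODE of the required form. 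Your unweighted $L^\infty$ control of $\nabla\omega$ through $D^2 u$ is correct but does not close this term; if you prefer your pairing, you would need to prove a weighted version of \eqref{EQthisijhhssdinthegdsfg281b} (plausible, since the $S$ and $Z$ norms already carry the weight $y$, but it is an additional lemma you must supply).

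The second, smaller issue is the viscous commutator. Since $\phi'=\bar\psi+y\bar\psi'$ equals $1$ for all $y\ge1/2$, the term $-2\nu\int_\HH\phi'\phi\,\partial^\alpha\omega\,\partial_y\partial^\alpha\omega$ is \emph{not} supported on the strip $[1/4,1/2]$. If you ``simply drop'' the dissipation, this term contains four derivatives of $\omega$ on $\{y\ge1/2\}$, where only $H^3$-type control is available, and it cannot be estimated. You must either retain part of $\nu\norm{\phi\nabla\partial^\alpha\omega}_{L^2}^2$ to absorb the factor $\partial_y\partial^\alpha\omega$ (as the paper does), or integrate by parts once more in $y$ to rewrite the term as $\nu\int_\HH(\phi''\phi+(\phi')^2)|\partial^\alpha\omega|^2$, which splits into a harmless $\nu E(t)$ contribution and a strip term bounded by $\nu\norm{\omega}_{X_\mu}^2$. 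With these two repairs your differential inequality, and hence the Gr\"onwall step, goes through exactly as in the paper.
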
 \colb \par \begin{proof}[Proof of Lemma~\ref{L12a}] Let $\alpha \in {\mathbb N}_0^2$ be a multi-index with $|\alpha|\leq 3$. We apply $\partial^\alpha$ to the vorticity form of the Navier-Stokes equation and test this equation with $\phi^2 \partial^\alpha \omega$ to obtain the energy estimate \begin{align} &\frac 12 \frac{d}{dt} \norm{\phi \partial^\alpha \omega}_{L^2(\HH)}^2 + \nu \norm{\phi \nabla \partial^\alpha \omega}_{L^2(\HH)}^2 \notag\\ &\indeq = 2 \int_{\HH} u_2 \phi' \phi |\partial^\alpha \omega|^2 - \sum_{0 < \beta \leq \alpha} {\alpha \choose \beta} \int_{\HH} \partial^\beta u \cdot \nabla \partial^{\alpha-\beta} \omega \partial^\alpha \omega \phi^2 - 2 \nu \int_{\HH}\phi' \partial^\alpha \omega \partial_y \partial^\alpha \omega \phi \, . \label{EQthisijhhssdinthegdsfg1001} \end{align} Using the pointwise estimate \begin{align} |\phi'(y)| \les \phi(y) + \chi_{ \{1/4 \leq y \leq 1/2\}} \notag \end{align} on the first and the third term, summing over $|\alpha|\leq 3$, and absorbing a part of the third term in \eqref{EQthisijhhssdinthegdsfg1001} we obtain \begin{align} \frac 12 \frac{d}{dt} \sum_{i+j\le 3} \lVert \phi \partial_x^i\partial_{y}^j\omega\rVert^2_{L^2(\HH)} &\les \biggl( \nu + \norm{u_2}_{L^\infty(y\geq 1/4)} + \sum_{1\leq k \leq 2} \lVert D^k u\rVert_{L^\infty_{x,y}(y\ge1/4)} \biggr) \sum_{i+j\le 3} \lVert \phi \partial_x^i\partial_{y}^j\omega\rVert^2_{L^2(\HH)} \notag\\ &\quad + \norm{D^3 u}_{L^2_{x,y}(y\geq 1/4)} \norm{\phi \nabla \omega}_{L^\infty(\HH)} \sum_{i+j\le 3} \lVert \phi \partial_x^i\partial_{y}^j\omega\rVert_{L^2(\HH)} \notag\\ &\quad + \left(\nu + \norm{u_2}_{L^\infty(\HH)} \right) \sum_{i+j\le 3} \lVert \partial_x^i\partial_{y}^j\omega\rVert^2_{L^2_{x,y}(1/4\leq y \leq 1/2)} \, . \label{EQthisijhhssdinthegdsfg1002} \end{align} Next, note that we have the analytic estimate \begin{align} \sum_{i+j\leq3} \lVert \partial_x^i\partial_{y}^j\omega\rVert^2_{L^2_{x,y}(1/4\leq y \leq 1/2)} &\les \sum_{i+j\leq3}\sum_\xi \norm{\xi^i \partial_y^j \omega_\xi}_{L^2(1/4\leq y \leq 1/2)}^2 \notag\\ &\les \sum_{i+j\leq3}\sum_\xi \norm{e^{\epsilon_0 (1+\mu/2-y)_+ |\xi|} \xi^i (y \partial_y)^j \omega_\xi}_{\ZL_{\mu/2,\nu}}^2 \notag\\ &\les \sum_{i+j\leq3}\norm{\partial_x^i (y \partial_y)^j \omega}_{X_{\mu/2}}^2 \llabel{sldkfjg;ls dslksfgj s;lkdfgj ;sldkfjg ;sldkfgj s;ldfkgj s;ldfkgj gfisasdoifaghskcx,.bvnliahglidhgsd gs sdfgh sdfg sldfg sldfgkj slfgj sl;dfgkj sl;dfgjk sldgfkj jiurwe alskjfa;sd fasdfEQthisijhhssdinthegdsfg1003} \end{align} uniformly in $\mu >0$. In particular, we may take $\mu = \frac{\mu_0-\gamma t}{10}$. Here we used essentially that the weight $w(y)$ is comparable to $1$ (independently of $\nu$) in the region $\{1/4 \leq y \leq 1/2\}$. Since $\mu > 0$, we may further use the analyticity recovery Lemma~\ref{ana:rec:x}, and estimate \begin{align} \sum_{i+j\leq 3} \lVert \partial_x^i\partial_{y}^j\omega\rVert^2_{L^2_{x,y}(1/4\leq y \leq 1/2)} &\les \norm{\omega}_{X_{\mu}}^2 \, . \label{EQthisijhhssdinthegdsfg1004} \end{align} Note that we used that $\frac{1}{\mu} = \frac{10}{\mu_0-\gamma t} \leq \frac{20}{\mu_0} \les 1$. \par For the second term on the right side of \eqref{EQthisijhhssdinthegdsfg1002}, we appeal to \eqref{EQthisijhhssdinthegdsfg281b} and to the estimate \begin{align} \norm{\phi \nabla \omega}_{L^\infty(\HH)} &\les \norm{\nabla (\phi \omega)}_{L^\infty(\HH)} + \norm{\phi \omega}_{L^\infty(\HH)} + \norm{\omega}_{L^\infty_{x,y}(1/4 \leq y\leq 1/2)} \notag\\ &\les \sum_{i+j\leq 3} \norm{\partial_x^i \partial_y^j (\phi \omega)}_{L^2(\HH)} + \norm{\omega}_{X_\mu} \, . \notag \end{align} Here we have used the Sobolev embedding $H^2(\HH) \subset L^\infty(\HH)$, the previously established bound \eqref{EQthisijhhssdinthegdsfg1004}, the Leibniz rule, and the definition of $\phi$ in \eqref{EQthisijhhssdinthegdsfg:phi:def}. The resulting inequality is \begin{align} \norm{D^3 u}_{L^2_{x,y}(y\geq 1/4)} \norm{\phi \nabla \omega}_{L^\infty(\HH)} \sum_{i+j\le 3} \lVert \phi \partial_x^i\partial_{y}^j\omega\rVert_{L^2(\HH)} \les \NORM{\omega}_t \sum_{i+j\le 3} \lVert \phi \partial_x^i\partial_{y}^j\omega\rVert_{L^2(\HH)}^2 + \NORM{\omega}_t \norm{\omega}_{X_\mu}^2 \, . \notag \end{align} \par Combining \eqref{EQthisijhhssdinthegdsfg1002}--\eqref{EQthisijhhssdinthegdsfg1004}, and Lemma~\ref{x:u} we deduce \begin{align} \frac 12 \frac{d}{dt} \sum_{i+j\le 3} \lVert \phi \partial_x^i\partial_{y}^j\omega\rVert^2_{L^2(\HH)} &\les (1 + \NORM{\omega(t)}_t) \sum_{i+j\le 3} \lVert \phi \partial_x^i\partial_{y}^j\omega(t)\rVert^2_{L^2(\HH)} + (1+ \NORM{\omega(t)}_t) \norm{\omega(t)}_{X_{\mu}}^2 \, . \notag \end{align} Upon applying the Gr\"onwall inequality, the proof of the lemma is concluded. \end{proof} \par \appendix \par \startnewsection{Proofs of some technical lemmas}{sec06} Here we list some technical lemmas. The next lemma converts an $\ell^1$ norm in $\xi$ to an $\ell^2$ norm, which is necessary when converting $S_\mu$ norms to an $S$ and hence a $Z$ norm. \par \cole \begin{lemma} \label{L06} Let $\mu\in(0,1)$. We have \begin{align} \sum_{i+j\le2}&\lVert \partial_x^i (y\partial_{y})^j\omega\rVert_{S_\mu} \lesssim \sum_{i+j\le2} \norm{\partial_x^i(y\partial_{y})^j\omega}_S + \norm{\partial_x^{i+1}(y\partial_{y})^j\omega}_S \, . \llabel{sldkfjg;ls dslksfgj s;lkdfgj ;sldkfjg ;sldkfgj s;ldfkgj s;ldfkgj gfisasdoifaghskcx,.bvnliahglidhgsd gs sdfgh sdfg sldfg sldfgkj slfgj sl;dfgkj sl;dfgjk sldgfkj jiurwe alskjfa;sd fasdfEQthisijhhssdinthegdsfg165} \end{align} \end{lemma}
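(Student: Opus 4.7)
The plan is to convert the $\ell^1$-in-$\xi$ summation inherent in the $S_\mu$ norm into the $\ell^2$-in-$\xi$ summation used to define the $S$ norm via Cauchy--Schwarz in $\xi$, at the cost of one extra tangential derivative that supplies a $(1+|\xi|)$ weight sufficient for the resulting $\xi$-sum $\sum_{\xi\in\ZZ}(1+|\xi|)^{-2}$ to converge.

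First, I would fix $(i,j)$ with $i+j\le 2$, and, using the fact that $\partial_x$ acts as multiplication by $i\xi$ on the Fourier side, write
\[
\lVert \partial_x^i(y\partial_y)^j\omega\rVert_{S_\mu} = \sum_{\xi\in\ZZ} |\xi|^i\, \lVert y(y\partial_y)^j\omega_\xi\rVert_{L^2(y\ge 1+\mu)}.
\]
Next, I would split $|\xi|^i = (1+|\xi|)^{-1}(1+|\xi|)|\xi|^i$ and apply Cauchy--Schwarz in $\xi$, which produces the finite universal factor $\bigl(\sum_\xi (1+|\xi|)^{-2}\bigr)^{1/2}\les 1$ multiplied by the $\ell^2$ sum
\[
\Bigl(\sum_\xi (1+|\xi|)^2|\xi|^{2i}\,\lVert y(y\partial_y)^j\omega_\xi\rVert_{L^2(y\ge 1+\mu)}^2\Bigr)^{1/2}.
\]
Using $(1+|\xi|)^2\les 1+|\xi|^2$ and splitting the resulting sum into two, together with the trivial set inclusion $\{y\ge 1+\mu\}\subset\{y\ge 1/2\}$ (which holds since $\mu>0$), each of the two summands is directly identified with the appropriate $S$ norm by Parseval/Plancherel: they equal $\lVert \partial_x^i(y\partial_y)^j\omega\rVert_S$ and $\lVert \partial_x^{i+1}(y\partial_y)^j\omega\rVert_S$ respectively.

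Summing over all $(i,j)$ with $i+j\le 2$ then yields
\[
\sum_{i+j\le 2}\lVert \partial_x^i(y\partial_y)^j\omega\rVert_{S_\mu} \les \sum_{i+j\le 2}\bigl(\lVert \partial_x^i(y\partial_y)^j\omega\rVert_S + \lVert \partial_x^{i+1}(y\partial_y)^j\omega\rVert_S\bigr),
\]
which is the stated inequality. There is no real obstacle in this argument: it is a soft bookkeeping estimate exchanging one $x$-derivative for the upgrade from $\ell^1_\xi$ to $\ell^2_\xi$. The only point worth verifying is that the constant from the $\xi$-sum is finite and independent of $\mu$, which is immediate from summability of $(1+|\xi|)^{-2}$ over $\ZZ$, so that the implicit constants in $\les$ are universal.
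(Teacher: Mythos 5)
Your proposal is correct and is essentially the paper's own argument: the paper proves the lemma by the single weighted Cauchy--Schwarz inequality $\sum_{\xi}|v_\xi|\les\bigl(\sum_\xi(1+|\xi|^2)|v_\xi|^2\bigr)^{1/2}$, valid since $\sum_\xi(1+|\xi|^2)^{-1}<\infty$, applied with $v_\xi=\lVert y\,\xi^i(y\partial_y)^j\omega_\xi\rVert_{L^2(y\ge1+\mu)}$ together with the inclusion $\{y\ge1+\mu\}\subset\{y\ge1/2\}$. Your version merely spells out the same $\ell^1_\xi\to\ell^2_\xi$ conversion in more detail, so no further comment is needed.
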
 \colb \par {\begin{proof}[Proof of Lemma~\ref{L06}] We have \begin{align} &\sum_{\xi} |v_{\xi}| \les \biggl( \sum_{\xi} (1+|\xi|^2)|v_{\xi}|^2 \biggr)^{1/2} \label{EQthisijhhssdinthegdsfg07} \end{align} for every $v$ for which the right side is finite. The inequality \eqref{EQthisijhhssdinthegdsfg07} holds since $\sum_{\xi}(1+|\xi|^2)^{-1}<\infty$. \end{proof} \par \cole \begin{lemma} \label{int:t} Assume that the parameters $\mu, \mu_0,\gamma, t > 0$ obey $\mu < \mu_0 - \gamma t$. Then, for $\alpha \in (0,\frac 12)$ we have \begin{equation} \int_0^t \frac{1}{\sqrt{t-s}}\frac{1}{(\mu_0-\mu-\gamma s)^{1+\alpha}}\,ds \leq \frac{C}{\sqrt{\gamma}(\mu_0-\mu-\gamma t)^{1/2+\alpha}} \label{EQthisijhhssdinthegdsfg310} \end{equation} and \begin{equation} \int_0^t \frac{1}{\sqrt{t-s}}\frac{1}{(\mu_0-\mu-\gamma s)^{\alpha}}\,ds \leq \frac{C}{\sqrt{\gamma}} \, , \label{EQthisijhhssdinthegdsfg310a} \end{equation} where $C>0$ is a constant depending on $\mu_0$ and $1/2-\alpha$. \end{lemma}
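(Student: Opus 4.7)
The plan is to reduce both estimates to one-variable integrals via a single substitution, then split the resulting integrals at a dyadic scale to separate the square-root singularity at the lower endpoint from the polynomial behavior of the other factor.

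First, I would set $A = \mu_0 - \mu$ and $B = \mu_0 - \mu - \gamma t$, so that by hypothesis $0 < B \leq A \leq \mu_0$. Then I would apply the change of variables $u = \mu_0 - \mu - \gamma s$, under which $du = -\gamma\,ds$ and $t - s = (u - B)/\gamma$. The two integrals transform respectively into
\begin{equation}
\frac{1}{\sqrt{\gamma}} \int_B^A \frac{du}{\sqrt{u-B}\, u^{1+\alpha}} \qquad \text{and} \qquad \frac{1}{\sqrt{\gamma}} \int_B^A \frac{du}{\sqrt{u-B}\, u^{\alpha}}\,, \notag
\end{equation}
concentrating all dependence on $t, \mu, \gamma$ into the endpoints $B, A$ and the prefactor $1/\sqrt{\gamma}$.

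Next, assuming $2B \leq A$ (the complementary case $2B > A$ is simpler and can be handled by the same inner-region estimate applied on all of $[B, A]$), I would split each integral as $\int_B^{2B} + \int_{2B}^A$. On the inner interval $[B, 2B]$ the weight $u^{-1-\alpha}$ (respectively $u^{-\alpha}$) is bounded above by $B^{-1-\alpha}$ (resp.\ $B^{-\alpha}$), and the remaining singularity is integrable with $\int_B^{2B} du/\sqrt{u-B} = 2\sqrt{B}$, producing contributions of size $2 B^{-1/2-\alpha}$ and $2 B^{1/2-\alpha}$. On the outer interval $[2B, A]$, the estimate $\sqrt{u-B} \geq \sqrt{u/2}$ reduces the integrands to $u^{-3/2-\alpha}$ and $u^{-1/2-\alpha}$, which integrate (using $\alpha > 0$ for the first and $\alpha < 1/2$ for the second) to $C B^{-1/2-\alpha}$ and $C(A^{1/2-\alpha} - (2B)^{1/2-\alpha}) \leq C \mu_0^{1/2-\alpha}$, respectively.

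Combining the inner and outer contributions and reintroducing the prefactor $1/\sqrt{\gamma}$ yields \eqref{EQthisijhhssdinthegdsfg310} and \eqref{EQthisijhhssdinthegdsfg310a}. I do not anticipate a substantive obstacle here; the argument is essentially the standard beta-function type estimate. The only things worth tracking carefully are the degeneration of the constants as $\alpha \uparrow 1/2$, which is exactly the stated dependence on $1/2 - \alpha$, and the fact that the bound in \eqref{EQthisijhhssdinthegdsfg310a} genuinely uses $A \leq \mu_0$, accounting for the stated $\mu_0$-dependence of the constant.
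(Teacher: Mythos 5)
Your proof is correct; every step checks out, including the boundary case $2B>A$ and the way the constants pick up the stated dependence on $\mu_0$ and $1/2-\alpha$. The route is, however, different in its mechanics from the paper's. The paper rescales $s'=\gamma s$, then for \eqref{EQthisijhhssdinthegdsfg310} peels off the non-singular power at its worst endpoint value, $(\mu'-s')^{-\alpha}\le(\mu'-t')^{-\alpha}$, and evaluates the remaining integral $\int_0^{t'}\frac{ds'}{\sqrt{t'-s'}\,(\mu'-s')}$ in closed form as $\frac{2}{\sqrt{\mu'-t'}}\arctan\bigl(\sqrt{\tfrac{t'}{\mu'-t'}}\bigr)\le\frac{\pi}{\sqrt{\mu'-t'}}$; for \eqref{EQthisijhhssdinthegdsfg310a} it instead uses $\mu'-s'\ge t'-s'$ to reduce to $\int_0^{t'}(t'-s')^{-1/2-\alpha}\,ds'$. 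Your substitution $u=\mu_0-\mu-\gamma s$ followed by the split at $u=2B$ replaces the exact antiderivative with the standard near-diagonal/far-field dichotomy of a beta-type integral. What the paper's argument buys is brevity and, for \eqref{EQthisijhhssdinthegdsfg310}, a constant that is uniform in $\alpha$; what yours buys is robustness, since it needs no closed-form primitive and would apply verbatim to other exponent pairs. Either proof is acceptable here.
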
 \colb \par \begin{proof}[Proof of Lemma~\ref{int:t}] Changing variables $s'=\gamma s$, $t'=\gamma t $, and letting $\mu_0 - \mu = \mu' > 0$, the left side of \eqref{EQthisijhhssdinthegdsfg310} is rewritten and bounded as \begin{align} & \int_0^{t'} \frac{\sqrt{\gamma}}{\sqrt{t'- s'}}\frac{1}{(\mu'- s')^{1+\alpha}}\, \frac{d s'}{\gamma} \leq \frac{1}{\sqrt{\gamma}(\mu'- t')^{\alpha} } \int_0^{t'} \frac{d s'}{\sqrt{t'- s'}(\mu'- s')} \notag \\&\indeq = \frac{2 \arctan\left(\sqrt{\frac{t'}{\mu'-t'}}\right)}{\sqrt{\gamma}(\mu'- t')^{1/2+ \alpha} } \les \frac{1}{\sqrt{\gamma}(\mu'- t')^{1/2+ \alpha} } = \frac{1}{\sqrt{\gamma}(\mu_0-\mu-\gamma t)^{1/2+ \alpha} } \, . \notag \end{align} \par In order to prove \eqref{EQthisijhhssdinthegdsfg310a}, we proceed similarly and use $\mu' > t'$ to deduce \begin{equation} \int_0^{t'} \frac{\sqrt{\gamma}}{\sqrt{t'- s'}}\frac{1}{(\mu'- s')^{\alpha}}\, \frac{d s'}{\gamma} \leq \frac{1}{\sqrt{\gamma}} \int_0^{t'} \frac{d s'}{(t'- s')^{1/2+\alpha}} \les \frac{1}{\sqrt{\gamma}} \, , \notag \end{equation} where the implicit constant may depend on $\mu_0$ and $1/2-\alpha$. \end{proof} \par \cole \begin{lemma}[Analyticity recovery for the $X$~norm] For $\tilde \mu>\mu\ge0$, we have \label{ana:rec:x} \begin{equation} \sum_{i+j=1}\lVert \partial_{x}^i(y\partial_{y})^jf\rVert_{X_\mu} \lesssim \frac{1}{\tilde \mu-\mu}\lVert f\rVert_{X_{\tilde \mu}} \, . \llabel{sldkfjg;ls dslksfgj s;lkdfgj ;sldkfjg ;sldkfgj s;ldfkgj s;ldfkgj gfisasdoifaghskcx,.bvnliahglidhgsd gs sdfgh sdfg sldfg sldfgkj slfgj sl;dfgkj sl;dfgjk sldgfkj jiurwe alskjfa;sd fasdfEQthisijhhssdinthegdsfg311} \end{equation} \end{lemma}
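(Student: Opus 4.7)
The plan is to treat the two first-order terms, $\partial_x f$ and $y\partial_y f$, separately, and to exploit in each case a different feature of the analytic $X_\mu$ norm: the exponential weight in the tangential frequency $\xi$ for the first term, and complex analyticity of $f_\xi(y)$ inside $\Omega_\mu$ for the second.

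For the tangential derivative $\partial_x f$, I would proceed at the level of the Fourier transform in $x$. Since $\Omega_\mu \subset \Omega_{\tilde\mu}$ and every $y\in\Omega_\mu$ satisfies $\Re y \leq 1+\mu \leq 1+\tilde\mu$, the positive-part exponents in the weights compare as
\[
e^{\ee(1+\mu-y)_+|\xi|} = e^{\ee(1+\tilde\mu-y)_+|\xi|}\, e^{-\ee(\tilde\mu-\mu)|\xi|}.
\]
Taking the supremum over $\Omega_\mu$ and multiplying by $|\xi|$, I would use the elementary bound $|\xi| e^{-\ee(\tilde\mu-\mu)|\xi|} \lesssim (\tilde\mu-\mu)^{-1}$, and sum over $\xi$ to obtain $\|\partial_x f\|_{X_\mu} \lesssim (\tilde\mu-\mu)^{-1}\|f\|_{X_{\tilde\mu}}$.

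For the conormal derivative $y\partial_y f$, I would use the Cauchy integral formula on a disk in the complex $y$-variable. The geometric observation is that, for every $y\in\Omega_\mu$, the disk $D_r(y) = \{z\in\CC : |z-y|\leq r\}$ with radius $r = c(\tilde\mu - \mu)\Re y$ is contained in $\Omega_{\tilde\mu}$, provided $c>0$ is chosen sufficiently small in terms of $\mu_0$. This is essentially because, in the region $\Re y \leq 1$ the boundary $|\Im z|=\tilde\mu\Re z$ lies at distance $\gtrsim (\tilde\mu-\mu)\Re y$ from $y$, while in the region $1\leq \Re y \leq 1+\mu$ one has $\Re y \lesssim 1$ and the corresponding distance is $\gtrsim \tilde\mu-\mu$. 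Cauchy's formula then gives
\[
|y\partial_y f_\xi(y)| \leq \frac{|y|}{r}\sup_{|z-y|=r}|f_\xi(z)| \lesssim \frac{1}{\tilde\mu-\mu}\sup_{|z-y|=r}|f_\xi(z)|,
\]
where I used $|y|\lesssim \Re y$ on $\Omega_\mu$. To convert the right-hand side into the $X_{\tilde\mu}$ norm, I would verify two pointwise comparisons on $D_r(y)$: first, since $\Re z \geq \Re y - r \geq \Re y/2$, property (b) of Remark~\ref{R04} gives $w(\Re y)\lesssim w(\Re z)$; second, the inequality $(1+\tilde\mu-\Re z)_+ \geq 1+\mu-\Re y = (1+\mu-y)_+$ holds once $r \leq (\tilde\mu-\mu)/2$, which is guaranteed by taking $c$ small. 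Combining these, multiplying by $w(\Re y)e^{\ee(1+\mu-y)_+|\xi|}$, taking the supremum over $y\in\Omega_\mu$, and summing over $\xi$, yields the bound for $y\partial_y f$.

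The main (minor) technical point will be the geometric case analysis needed to check that $D_r(y)\subset \Omega_{\tilde\mu}$ near the corner $\Re y=1$ of $\Omega_\mu$, and the verification of the two weight comparisons on $D_r(y)$; once these are in place, Cauchy's formula and the trivial bound $|\xi|e^{-\ee(\tilde\mu-\mu)|\xi|}\lesssim (\tilde\mu-\mu)^{-1}$ give the result directly.
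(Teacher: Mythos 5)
Your proposal is correct and follows essentially the same route as the paper: the tangential derivative is handled by the elementary bound $(\tilde\mu-\mu)|\xi|e^{-\ee(\tilde\mu-\mu)|\xi|}\lesssim 1$, and the conormal derivative by Cauchy's formula on a circle of radius comparable to $(\tilde\mu-\mu)\Re y$ centered at $y$. The only difference is that you spell out the geometric containment of the disk in $\Omega_{\tilde\mu}$ and the weight comparisons $w(\Re y)\lesssim w(\Re z)$, $(1+\tilde\mu-\Re z)_+\geq(1+\mu-\Re y)_+$, which the paper leaves implicit.
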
 \colb \par \begin{proof}[Proof of Lemma~\ref{ana:rec:x}] First, let $(i,j)=(1,0)$. According to the definition of the $X_\mu$~norm, and using that the bound $(\tilde \mu-\mu) |\xi| e^{\epsilon_0 |\xi| ( (1+\mu-y)_+ - (1+\tilde \mu-y)_+)} \lesssim 1$ holds on $\Omega_\mu$, we have \begin{align} \lVert \partial_{x}f \rVert_{X_\mu} &= \sum_\xi \lVert \xi e^{\ee(1+\mu-y)_+|\xi|} f_\xi\rVert_{\ZL_{\mu,\nu}} \lesssim \frac{1}{\tilde \mu-\mu}\sum_\xi \lVert e^{\ee(1+\tilde \mu-y)_+|\xi|} f_\xi\rVert_{\ZL_{\mu,\nu}} \notag \\& \lesssim \frac{1}{\tilde \mu-\mu}\sum_\xi \lVert e^{\ee(1+\tilde \mu-y)_+|\xi|} f_\xi\rVert_{\ZL_{\tilde \mu, \nu}} = \frac{1}{\tilde \mu-\mu}\lVert f \rVert_{X_{\tilde \mu}} \, . \llabel{sldkfjg;ls dslksfgj s;lkdfgj ;sldkfjg ;sldkfgj s;ldfkgj s;ldfkgj gfisasdoifaghskcx,.bvnliahglidhgsd gs sdfgh sdfg sldfg sldfgkj slfgj sl;dfgkj sl;dfgjk sldgfkj jiurwe alskjfa;sd fasdfEQthisijhhssdinthegdsfg312} \end{align} \par Next, consider $(i,j)=(0,1)$. By the Cauchy integral theorem, we have \begin{align} \partial_{y}f_\xi(y) = \int_{C(y, R_y)} \frac{f_\xi(z)}{(y-z)^2}\,dz \, , \label{EQthisijhhssdinthegdsfg:Cau:int} \end{align} where $C(y, R_y)$ is the circle centered at $y$ with radius $R_y$. Hence, we have \begin{equation} |\partial_{y}f_\xi(y)| \lesssim \frac{1}{R_y}\sup_{z\in C(y, R_y)} |f_\xi(z)| \, . \llabel{sldkfjg;ls dslksfgj s;lkdfgj ;sldkfjg ;sldkfgj s;ldfkgj s;ldfkgj gfisasdoifaghskcx,.bvnliahglidhgsd gs sdfgh sdfg sldfg sldfgkj slfgj sl;dfgkj sl;dfgjk sldgfkj jiurwe alskjfa;sd fasdfEQthisijhhssdinthegdsfg313} \end{equation} By taking $R_y=C^{-1}(\tilde \mu-\mu)\Re y$, for a sufficiently large universal constant $C>0$, we obtain \begin{align} \lVert y\partial_{y}f \rVert_{X_\mu} &= \sum_\xi \lVert e^{\ee(1+\mu-y)_+|\xi|}y\partial_{y}f_\xi\rVert_{\ZL_{\mu,\nu}} \lesssim \frac{1}{\tilde \mu-\mu}\sum_\xi \lVert e^{\ee(1+\mu-y)_+|\xi|} f_\xi\rVert_{\ZL_{\tilde \mu, \nu}} \notag \\& \lesssim \frac{1}{\tilde \mu-\mu}\sum_\xi \lVert e^{\ee(1+\tilde \mu-y)_+|\xi|} f_\xi\rVert_{\ZL_{\tilde \mu, \nu}} = \frac{1}{\tilde \mu-\mu}\lVert f \rVert_{X_{\tilde \mu}} \, , \llabel{sldkfjg;ls dslksfgj s;lkdfgj ;sldkfjg ;sldkfgj s;ldfkgj s;ldfkgj gfisasdoifaghskcx,.bvnliahglidhgsd gs sdfgh sdfg sldfg sldfgkj slfgj sl;dfgkj sl;dfgjk sldgfkj jiurwe alskjfa;sd fasdfEQthisijhhssdinthegdsfg314} \end{align} concluding the proof. \end{proof} \par \cole \begin{lemma}[Analyticity recovery for the $\YY$~norm] Let $\mu_0\geq \tilde \mu>\mu\ge0$. Then we have \label{ana:rec:l} \begin{equation} \sum_{i+j=1}\lVert \partial_{x}^i(y\partial_{y})^jf\rVert_{\YY_\mu} \lesssim \frac{1}{\tilde \mu-\mu}\lVert f\rVert_{\YY_{\tilde \mu}} \, . \label{EQthisijhhssdinthegdsfg315} \end{equation} \end{lemma}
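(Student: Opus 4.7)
The plan is to mirror the proof of Lemma~\ref{ana:rec:x}, adapting each step to the $L^1$-based $\SL_\mu$ norm. I would handle the tangential derivative case $(i,j) = (1,0)$ and the conormal derivative case $(i,j) = (0,1)$ separately.

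For $(i,j) = (1,0)$, I would rely on the pointwise estimate $(\tilde\mu - \mu)|\xi| e^{\ee((1+\mu-y)_+ - (1+\tilde\mu-y)_+)|\xi|} \lesssim 1$ on $\Omega_\mu$, which follows from $(1+\tilde\mu-y)_+ - (1+\mu-y)_+ = \tilde\mu-\mu$ on this domain (since $\Re y \leq 1+\mu$) combined with the elementary bound $s e^{-\ee s} \lesssim 1$. This trades the factor of $|\xi|$ produced by $\partial_x$ for a factor of $(\tilde\mu-\mu)^{-1}$ and upgrades the exponential weight from $\mu$ to $\tilde\mu$. Combined with the monotonicity $\lVert \cdot \rVert_{\SL_\mu} \leq \lVert \cdot \rVert_{\SL_{\tilde\mu}}$ (immediate from the definition as a supremum of $L^1$ norms over $\theta$ in a larger interval), this closes the tangential case after summing in $\xi$.

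For $(i,j) = (0,1)$, I would apply the Cauchy integral formula to $f_\xi$ on the circle $C(y, R_y)$ with $R_y = C^{-1}(\tilde\mu - \mu)\Re y$, where $C$ is a universal constant chosen large enough to guarantee $C(y, R_y) \subset \Omega_{\tilde\mu}$ for every $y \in \Omega_\mu$. Parameterizing $z = y + R_y e^{i\phi}$ and using $|y| \lesssim \Re y$ on $\Omega_\mu$, this yields
\begin{align*}
|y \partial_y f_\xi(y)| \lesssim \frac{1}{\tilde\mu-\mu} \int_0^{2\pi} |f_\xi(y + R_y e^{i\phi})|\,d\phi.
\end{align*}
Multiplying by the weight $e^{\ee(1+\mu-y)_+|\xi|}$, integrating over $\partial\Omega_\theta$, and exchanging the order of integration with Fubini reduces matters, for each fixed $\phi$, to bounding the weighted $L^1$ integral of $|f_\xi|$ along the deformed curve $\Gamma_\phi^\theta = \{y + R_y e^{i\phi} : y \in \partial\Omega_\theta\}$.

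I expect the main obstacle to be the final geometric bookkeeping. For $c := 1/C$ small, $\Gamma_\phi^\theta$ is a small Lipschitz perturbation of $\partial\Omega_\theta$ contained in $\Omega_{\theta'}$ for some $\theta' \in (\theta, \tilde\mu)$, the Jacobian of $y \mapsto z$ is uniformly bounded above and below, and the weight distortion $e^{\ee(1+\mu-y)_+|\xi|} \lesssim e^{\ee(1+\tilde\mu-\Re z)_+|\xi|}$ follows from $|z-y| \leq c(\tilde\mu-\mu)$ together with a harmless adjustment of $\ee$. The genuinely subtle point is relating the $L^1$ integral along $\Gamma_\phi^\theta$ to the supremum defining $\SL_{\tilde\mu}$, which is taken over the admissible boundaries $\partial\Omega_{\theta'}$; I would either reparametrize $\Gamma_\phi^\theta$ so it coincides with a piece of some $\partial\Omega_{\theta'}$ with $\theta' < \tilde\mu$, or dominate the integral over $\Gamma_\phi^\theta$ by a uniform supremum of $L^1$ norms over such boundaries using the fact that $|f_\xi|$ is subharmonic. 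Once this is in hand, taking the supremum over $\theta \in [0, \mu)$ and summing in $\xi$ completes the argument.
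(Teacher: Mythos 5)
Your proposal is correct and follows essentially the same route as the paper: the $(1,0)$ case is handled by the identical exponential-weight trade combined with the monotonicity of $\SL_\mu$ in $\mu$, and the $(0,1)$ case uses the Cauchy integral formula on circles of radius $R_y = C^{-1}(\tilde\mu-\mu)\Re y$, Fubini, and then domination of the integral over the deformed contour by $\sup_{\bar\theta}\norm{f_\xi}_{L^1(\partial\Omega_{\bar\theta})}$ over a window of $\bar\theta$ of width $\OO((\tilde\mu-\mu)/C)$ around $\theta$, which is exactly the first of your two suggested resolutions of the geometric bookkeeping. No substantive difference.
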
 \colb \par \begin{proof}[Proof of Lemma~\ref{ana:rec:l}] By the same argument which yielded the $X$ norm estimate in Lemma~\ref{ana:rec:x}, we obtain \begin{align} \lVert \partial_{x}f \rVert_{\YY_\mu} &= \sum_\xi \lVert \xi e^{\ee(1+\mu-y)_+|\xi|} f_\xi\rVert_{\SL_\mu} \lesssim \frac{1}{\tilde \mu-\mu}\sum_\xi \lVert e^{\ee(1+\tilde \mu-y)_+|\xi|} f_\xi\rVert_{\SL_\mu} \notag \\& \lesssim \frac{1}{\tilde \mu-\mu}\sum_\xi \lVert e^{\ee(1+\tilde \mu-y)_+|\xi|} f_\xi\rVert_{\SL_{\tilde \mu}} = \frac{1}{\tilde \mu-\mu}\lVert f \rVert_{\YY_{\tilde \mu}} \, . \llabel{Molče v to prošnjo Črtomír dovoli,
z duhovnim bliža slapu se Savice,
molitve svete mašnik, on z njim moli,
v imeni kŕsti ga svete Trojice.
So na kolenah, kar jih je okoli,
se od veselja svet' obraz device,
ki je bila podpora vere krive,
je opravljála službo bógnje Žive.sldkfjg;ls dslksfgj s;lkdfgj ;sldkfjg ;sldkfgj s;ldfkgj s;ldfkgj gfisasdoifaghskcx,.bvnliahglidhgsd gs sdfgh sdfg sldfg sldfgkj slfgj sl;dfgkj sl;dfgjk sldgfkj jiurwe alskjfa;sd fasdfEQthisijhhssdinthegdsfg316} \end{align} In order to prove the estimate \eqref{EQthisijhhssdinthegdsfg315} for $(i,j)=(0,1)$, we use \eqref{EQthisijhhssdinthegdsfg:Cau:int} to bound \begin{align} \lVert y\partial_{y}f_\xi \rVert_{L^1(\partial\Omega_{\theta})} = \int_{\partial\Omega_{\theta}} \left|\int_{C(y, R_y)} \frac{yf_\xi(z)}{(y-z)^2}\,dz\right|dy \lesssim \int_{\partial\Omega_{\theta}} \int_{C(y, R_y)} \frac{|yf_\xi(z)|}{R_y^2}\,dzdy \llabel{sldkfjg;ls dslksfgj s;lkdfgj ;sldkfjg ;sldkfgj s;ldfkgj s;ldfkgj gfisasdoifaghskcx,.bvnliahglidhgsd gs sdfgh sdfg sldfg sldfgkj slfgj sl;dfgkj sl;dfgjk sldgfkj jiurwe alskjfa;sd fasdfEQthisijhhssdinthegdsfg318} \end{align} for any $0\le \theta<\mu$. By taking $R_y=C^{-1} (\tilde \mu-\mu)\Re y$ for a sufficiently large universal constant $C>0$, using that $|y|$ is comparable to $\Re y$ in this region, and applying Fubini's theorem, we obtain \begin{align} \lVert y\partial_{y}f_\xi \rVert_{L^1(\partial\Omega_{\theta})} &\lesssim \frac{1}{\tilde \mu-\mu}\int_{\partial\Omega_{\theta}} \int_{C(y, R_y)} \frac{|f_\xi(z)|}{R_y}\,dzdy \notag \\& \les \frac{1}{\tilde \mu-\mu}\int_{\partial\Omega_{\theta}} \int_{0}^{2\pi} |f_\xi(y + R_y e^{i\phi} )| \,d\phi dy \notag \\& \les \frac{1}{\tilde \mu-\mu} \sup_{\bar \theta \in (\theta - \frac{2(\tilde \mu-\mu)}{C}, \theta + \frac{2(\tilde \mu-\mu)}{C})} \norm{f_\xi}_{L^1(\partial \Omega_{\bar \theta})} \notag \\ & \lesssim \frac{1}{\tilde \mu-\mu} \lVert f_\xi\rVert_{\SL_{\tilde \mu}} \, , \llabel{sldkfjg;ls dslksfgj s;lkdfgj ;sldkfjg ;sldkfgj s;ldfkgj s;ldfkgj gfisasdoifaghskcx,.bvnliahglidhgsd gs sdfgh sdfg sldfg sldfgkj slfgj sl;dfgkj sl;dfgjk sldgfkj jiurwe alskjfa;sd fasdfEQthisijhhssdinthegdsfg319} \end{align} which proves the claim. {Since $e^{\ee(1+\mu-y)_{+}|\xi|} \leq e^{\ee(1+\tilde \mu-y)_{+}|\xi|}$, for every $y \in \Omega_{\mu}$}, the lemma follows. \end{proof} \par \section*{Acknowledgments} IK was supported in part by the NSF grant DMS-1615239, while VV was supported in part by the NSF CAREER Grant DMS-1652134. \par  \par \end{document}